\documentclass{amsart}
\usepackage{amsmath, amssymb, amsfonts, url, color, hyperref}

\newtheorem{theorem}{Theorem}[section]
\newtheorem{lemma}{Lemma}[section]
\newtheorem{proposition}{Proposition}[section]
\newtheorem{corollary}{Corollary}[section]

\newtheorem*{question*}{Question}
\newtheorem*{problem*}{Problem}
\newtheorem{definition}{Definition}[section]
\newtheorem{example}{Example}[section]
\newtheorem{remark}{Remark}[section]
\newtheorem*{acknowledgements*}{Acknowledgements}
\numberwithin{equation}{section}
\newcommand{\CC}{\mathbb{C}}
\newcommand{\RR}{\mathbb{R}}

\begin{document}

\title[A new plurisubharmonic capacity]{A new plurisubharmonic capacity and functions holomorphic along holomorphic vector fields}

\author{Ye-Won Luke Cho}
\dedicatory{Dedicated to Professor Kang-Tae Kim on the occasion of his 65th birthday}
\subjclass[2010]{32A10, 32A05, 32M25, 32S65, 32U20}

\keywords{Complex-analyticity, Forelli's theorem, Foliation of vector fields, Pluripotential theory.}
\begin{abstract}
    The main purpose of this article is to present a generalization of Forelli's 
	theorem for functions holomorphic along a suspension of integral curves of a diagonalizable vector field of aligned type. For this purpose, we develop a new capacity theory that generalizes the theory of projective capacity introduced by Siciak \cite{Siciak82}.  Our main theorem improves the results of \cite{KPS09}, \cite{Cho22} as well as the original Forelli's theorem.
\end{abstract}

\maketitle

 \section{Introduction}
 \subsection{Notations and terminology}
Let $X$ be a holomorphic vector field defined on an open neighborhood of the origin in $\mathbb{C}^n$. A vector field $X$ is said to be $\textit{contracting at the origin}$ if the flow-diffeomorphism $\Phi_t$ of $\textup{Re}\,X$ for some $t<0$ satisfies: (1) $\Phi_t(0)=0$, and (2) every eigenvalue of the matrix $d\Phi_t|_{0}$ has absolute value less than 1. By the Poincar\'{e}-Dulac theorem, there exists a local holomorphic coordinate system near the origin such that $X$ takes the following form:
\begin{equation}\label{contractingVF}
X=\sum_{j=1}^{n}(\lambda_jz_j+g_j(z))\frac{\partial}{\partial z_j},
\end{equation}
where $g_j\in \mathbb{C}[z_1,\dots,z_n]$ and $\lambda_j\in \mathbb{C}$ for each $j$. A vector field $X$ is said to be \textit{aligned} if $\lambda_j/\lambda_k>0$ for each $j,k\in \{1,\dots,n\}$. In this paper, we only consider $\textit{diagonalizable}$ vector fields of aligned type, i.e., the fields take the form (\ref{contractingVF}) with $g_j\equiv 0$ for each $j$. A vector field $X$ with eigenvalues $\lambda=(\lambda_1,\dots,\lambda_n)$ will be denoted as a pair $(X,\lambda)$. We will also assume without loss of generality that $\lambda_1=1,\;\lambda_k>0$ for each $k\in \{2,\dots,n\}$.

Denote by $B^n(a;r) := \{z \in \CC^n \colon \|z-a\|<r  \}$ and by
$S^m := \{v \in \RR^{m+1} \colon \|v\|=1\}$.  With such notation,
the boundary of $B^n:=B^n(0;1)$ is $S^{2n-1}$.  Recall that the complex flow map $\Phi^X(z,t)$ of a vector field $(X,\lambda)$ on $\mathbb{C}^n$ is given as 
\[
\Phi^X(z,t)=(z_1e^{-\lambda_1 t},\dots,z_ne^{-\lambda_n t}).
\]
\begin{definition}
\normalfont

Let $F\subset S^{2n-1}$ be a nonempty set and $\mathbb{H}$ the open right-half plane in $\mathbb{C}$ and consider
\[
S^X_0(F):=\{\Phi^{X}(z,t):z\in F,~t\in \mathbb{H}\}.
\]
By a $\textit{suspension of integral curves of X}$, we mean a pair of the form $(S^X_0(F),\Phi^{X})$. For simplicity, we will denote a suspension by its underlying set. Note that $S^X_0(F)$ is always $\lambda$-$\textit{balanced}$, i.e., $\Phi^{X}(z,t)\in S^X_0(F)$ for each $t\in \mathbb{H}$ and $z\in S^X_0(F)$. A suspension $S^X_0(F)$ is called a $\textit{formal Forelli suspension}$ if any function $f:B^n\to \mathbb{C}$ satisfying the following two conditions
\begin{enumerate}
	\setlength\itemsep{0.1em}
	\item $f\in C^{\infty}(0)$, i.e., for each positive integer $k$ there
	exists an open neighborhood $V_k$ of the origin $0$ such that $f \in C^k 
	(V_k)$, and
	\item $f$ is holomorphic along $S^X_0(F),$ i.e., $t\in \mathbb{H}\to f\circ \Phi^{X}(z,t)\in \mathbb{C}$ is holomorphic for each $z\in F$
\end{enumerate}
has a formal Taylor series $S_f=\sum a_{km}z^{k}\bar{z}^{m}$  of $\textit{holomorphic type}$, that is, $a_{km}=0$ whenever $m\neq 0$. See (\ref{formal Taylor series}) for the definition of $a_{km}$. We also say that $S^X_0(F)$ is a $\textit{normal suspension}$ if any formal power series $S\in \mathbb{C}[[z_1,\dots,z_n]]$ for which $S_z(t):=S\circ \Phi^{X}(z,t)$ is holomorphic in $t\in \mathbb{H}$ for every $z\in F$ converges uniformly on a neighborhood of the origin in $\mathbb{C}^n$. A formal Forelli suspension that is also normal is called a $\textit{Forelli suspension}$. 
\end{definition}

Note that any function $f:B^n\to \mathbb{C}$ that is smooth at the origin and holomorphic along a Forelli suspension is holomorphic on $B^n(0;r)$ for some $r>0$. To give a local characterization of Forelli suspensions, we introduce the following

\begin{definition}\label{def.susp.}
	\normalfont
	Fix $d_1,d_2\geq 0$ and let $(X,\lambda)$ be a vector field on $\mathbb{C}^n$. We say that $q\in \mathbb{C}[z_1,\dots,z_n,\bar{z}_1,\dots,\bar{z}_n]$ is $\textit{quasi-homogeneous of type}\; \lambda\;\textit{with bidegree}$ $(d_1,d_2)$  if 
	\[
	q(\Phi^X(z,t))=e^{-d_1t}e^{-d_2\bar{t}}q(z)
	\]
	for any $t\in \mathbb{H},\,z\in \mathbb{C}^n$. In this case, we use the notation $\textup{bideg}\,q=(d_1,d_2)$. We also denote by $\mathcal{H}_{\lambda}$ the set of all such polynomials. Let $\bar{F}$ be the closure of $F$ in $S^{2n-1}$. $S^X_0(F)$ is said to have an $\textit{algebraically}$ $\textit{nonsparse leaf}~ L_z:=\{\Phi^X(z,t):t\in \mathbb{H}\}$ generated by $z\in \bar{F}$ if the following is true: for each open neighborhood $U\subset S^{2n-1}$ of $z$ and $q\in \mathcal{H}_{\lambda}$ with $\textup{bideg}\,q=(d_1,d_2)$, $d_2\neq 0$, satisfying
	\[
	\bar{F}\cap U\subset Z(q):=\{z\in \mathbb{C}^n:q(z)=0\},
	\] 
	we have $q\equiv 0$ on $\mathbb{C}^n$. In this case, the suspension is said to be $\textit{nonsparse}$. A suspension is $\textit{sparse}$ if it has no nonsparse leaf. 
	
	Let $\textup{Log}_1$ and $\textup{Log}_2$ be any complex logarithms on $\mathbb{C}$ with branch cuts $C_{1}:=\{z\in \mathbb{C}: \textup{Re}\,z\leq 0\}$ and $C_{2}:=\{z\in \mathbb{C}: \textup{Re}\,z\geq 0\}$, respectively. Let $(X,\lambda)$ be a vector field on $\mathbb{C}^n$ and $S^X_0(F)$ a suspension. Then for each $i\in \{1,2\}$, define
	\begin{align*}
		F'_{\lambda,i}:=&\bigg\{\bigg(\frac{z_2}{z^{\lambda_2}_1},\dots,\frac{z_n}{z^{\lambda_n}_1}\bigg)\in \mathbb{C}^{n-1}:(z_1,\dots,z_n)\in F,\,z_1\neq 0,\, z_1\notin C_{i}\bigg\},
	\end{align*}
	where $z^{\lambda_k}_1:=\textup{exp}\,(\lambda_k\cdot\textup{Log}_i\,z_1)$.  $S^{X}_0(F)$ is said to have a $\textit{regular leaf}~L_z$ generated by $z=(z_1,\ldots,z_n)\in \bar{F}$ if $z_1\neq 0$ and the $\lambda$-$\textit{direction set}$ $F'_{\lambda}:=F'_{\lambda,1}\cup F'_{\lambda,2}$ is locally $L$-regular at $\Big(\frac{z_2}{z^{\lambda_2}_1},\dots,\frac{z_n}{z^{\lambda_n}_1}\Big)\in F'_{\lambda,i}$ for some $i$. For the definition of $L$-regularity, see Definition \ref{Lregular}. A suspension is $\textit{regular}$ if it has a regular leaf.
\end{definition}
\subsection{Main theorem}
Let $\Psi_{E,\lambda}$ and $\rho_{\lambda}$ be the functions defined in Definition $\ref{definitionofextrftn}$. In this paper, we prove the following 
\begin{theorem}\label{main theorem}
	If a suspension $S^X_0(F)$ has a nonsparse leaf and a regular leaf, then it is a Forelli suspension; that is, any function $f:B^n\to \mathbb{C}$ satisfying the following two conditions
	\begin{enumerate}
		\setlength\itemsep{0.1em}
		\item $f\in C^{\infty}(0)$, and
		\item $t\in \mathbb{H}\to f\circ \Phi^{X}(z,t)$ is holomorphic for each $z\in F$
	\end{enumerate}
	is holomorphic on a $\lambda$-balanced domain of holomorphy
	\[
	\Omega:=\{z\in \mathbb{C}^n:\Psi^{\ast}_{S^X_0(F),\lambda}(z)<1\}\supset B^n(0;\{\rho_{\lambda}(S^{X}_0(F))\}^{\textup{max}(\lambda)})
	\]
	containing the origin. Furthermore, there exists an open neighborhood $U=U(F,X)$ $\subset S^{2n-1}$ of a generator $v_0\in \bar{F}$ of the regular leaf such that $f|_{\Omega}$ extends to a holomorphic function on an open set
	\[
	\hat{\Omega}=\{z\in \mathbb{C}^n:\Psi_{\Omega \cup S^X_0(U),\lambda}(z)<1\}
	\]
	which is the smallest $\lambda$-balanced domain of holomorphy containing $\Omega\cup S^X_0(U)$.
\end{theorem}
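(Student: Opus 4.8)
The plan is to verify directly that the two hypotheses make $S^X_0(F)$ a Forelli suspension — i.e.\ both a formal Forelli suspension and a normal suspension — and then read off $\Omega$, $\hat{\Omega}$ from Bernstein--Walsh estimates for the $\lambda$-balanced extremal function $\Psi_{E,\lambda}$ and capacity $\rho_\lambda$. First I would settle the formal part. Given $f$ as in (1)--(2), write its formal Taylor series $S_f=\sum a_{km}z^k\bar z^m$ as a sum $\sum_{(d_1,d_2)}q_{d_1,d_2}$ of quasi-homogeneous components $q_{d_1,d_2}\in\mathcal{H}_\lambda$, so $q_{d_1,d_2}\circ\Phi^X(z,t)=e^{-d_1t-d_2\bar t}q_{d_1,d_2}(z)$. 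For $z\in F$, condition (1) together with $\Phi^X(z,t)\to0$ gives the genuine asymptotic expansion $f\circ\Phi^X(z,t)\sim\sum_{(d_1,d_2)}q_{d_1,d_2}(z)e^{-d_1t-d_2\bar t}$ as $\operatorname{Re}t\to+\infty$; since by (2) this is holomorphic in $t$, a Forelli-type argument (using $\partial_{\bar t}(f\circ\Phi^X(z,\cdot))=0$ and uniqueness of asymptotic expansions) forces $d_2\,q_{d_1,d_2}(z)=0$. Thus every $q_{d_1,d_2}$ with $d_2\ne0$ vanishes on $F$, hence on $\bar F\cap U$ for all neighborhoods $U$ of the nonsparse generator $v_0\in\bar F$, so algebraic nonsparseness of $L_{v_0}$ gives $q_{d_1,d_2}\equiv0$ on $\mathbb{C}^n$. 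Hence $a_{km}=0$ for $m\ne0$, $S_f\in\mathbb{C}[[z_1,\dots,z_n]]$, and $S^X_0(F)$ is a formal Forelli suspension; uniqueness of asymptotic expansions also shows that for $z\in F$ the holomorphic function $f\circ\Phi^X(z,t)$ is the sum of the (a priori only formal) series $\sum_d q_d(z)e^{-dt}$, $q_d$ being the degree-$d$ holomorphic quasi-homogeneous component of $S_f$ — cleanly via the substitution $u=e^{-t/N}$ when the $\lambda_j$ are commensurable, and by approximation in general.

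The normal part is where the new capacity is essential. Take any formal holomorphic $S=\sum_d q_d$ with $\sum_d q_d(z)e^{-dt}$ convergent and holomorphic in $t\in\mathbb{H}$ for each $z\in F$; fix $z\in F$ with $z_1\ne0$, $z_1\notin C_i$, and set $\zeta=(z_2z_1^{-\lambda_2},\dots,z_nz_1^{-\lambda_n})\in F'_{\lambda,i}$. Quasi-homogeneity gives $q_d(z)=z_1^{\,d}\tilde q_d(\zeta)$ with $\tilde q_d$ an ordinary polynomial on $\mathbb{C}^{n-1}$ of degree $\le d/\min_k\lambda_k$, and the leaf-series convergence turns into the one-variable estimate $\limsup_d|\tilde q_d(\zeta)|^{1/d}\le|z_1|^{-1}$ — a pointwise growth bound for the sequence $\{\tilde q_d\}$ on $F'_\lambda$. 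Since $F'_\lambda$ is locally $L$-regular at the image $\zeta_0$ of the regular-leaf generator $v_0$ (Definition~\ref{Lregular}), the $\lambda$-weighted analogue of Siciak's theory \cite{Siciak82} upgrades this to a uniform Bernstein--Walsh estimate on compact sets, governed exactly by $\Psi_{E,\lambda}$ from Definition~\ref{definitionofextrftn}; re-homogenizing through the $\lambda$-scaling $\delta_\varepsilon$ (with $q_d\circ\delta_\varepsilon=\varepsilon^d q_d$) shows $\sum_d q_d$ converges locally uniformly precisely on $\{\Psi^\ast_{S^X_0(F),\lambda}<1\}$, which by the definition of $\rho_\lambda$ contains $B^n(0;\{\rho_\lambda(S^X_0(F))\}^{\max(\lambda)})$. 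So $S^X_0(F)$ is normal, hence a Forelli suspension; applying this to $S=S_f$ and combining with the first paragraph, the convergent sum $\tilde f:=\sum_k a_kz^k$ is holomorphic on the $\lambda$-balanced domain of holomorphy $\Omega=\{\Psi^\ast_{S^X_0(F),\lambda}<1\}$ ($\Psi^\ast$ is plurisubharmonic), and a flatness argument on each leaf identifies $f$ with $\tilde f$ on $S^X_0(F)\cap\Omega$ — the asserted holomorphy of $f$ on $\Omega$.

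For the \emph{furthermore} claim I would, with $f$ now holomorphic on $\Omega$, pick $U=U(F,X)\subset S^{2n-1}$ a neighborhood of $v_0$ small enough that $f$ is holomorphic along every leaf of $S^X_0(U)$ — each such leaf meets $\Omega$ near $0$, and $v_0\in\bar F$ with (2) lets one pass to the limit — so that $f$ is holomorphic on the $\lambda$-balanced $\Omega\cup S^X_0(U)$. The key point is that, unlike $f$ itself, the quasi-homogeneous components $q_d$ of $S_f$ are automatically bounded on $\Omega\cup S^X_0(U)$ with geometric growth in $d$ (by quasi-homogeneity and the second-paragraph estimate on $\bar U$); so the Bernstein--Walsh inequality $|q_d(z)|\le\|q_d\|_{\Omega\cup S^X_0(U)}\,\Psi_{\Omega\cup S^X_0(U),\lambda}(z)^d$ gives convergence of $\sum_d q_d$ on $\hat{\Omega}=\{\Psi_{\Omega\cup S^X_0(U),\lambda}<1\}$ to the holomorphic extension of $f|_\Omega$. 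That $\hat{\Omega}$ is the smallest $\lambda$-balanced domain of holomorphy containing $\Omega\cup S^X_0(U)$ is the $\lambda$-analogue of the classical description of the balanced envelope of holomorphy through its Siciak extremal function.

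The main obstacle is the second paragraph: upgrading the single-leaf, one-variable growth bound for $\{\tilde q_d\}$ to a uniform estimate in all transverse directions \emph{with the sharp exponential rate}, so that the convergence domain is exactly $\Omega$ (then $\hat{\Omega}$) and not merely some neighborhood of $0$ — precisely the role of the new plurisubharmonic capacity $\rho_\lambda$ and extremal function $\Psi_{E,\lambda}$. The subtleties: for genuinely aligned, incommensurable $\lambda$ the leaf series $\sum_d q_d(z)e^{-dt}$ is a generalized Dirichlet series, not a power series in $e^{-t}$, so the one-variable coefficient estimates and the quasi-homogeneous Bernstein--Walsh inequality must be built from scratch; and one must verify that \emph{local} $L$-regularity of $F'_\lambda$ at the single point $\zeta_0$ already suffices.
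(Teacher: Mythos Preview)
Your formal step (paragraph 1) matches the paper's Theorem~\ref{formal Forelli suspension theorem} essentially verbatim: asymptotic expansions along leaves, uniqueness, and nonsparseness kill every $q_{d_1,d_2}$ with $d_2\neq 0$.

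For the normal step, however, the paper does \emph{not} pass through the direction set $F'_\lambda$. Theorem~\ref{normalsuspension} works directly in $\mathbb{C}^n$: one writes $S^X_0(F)=\bigcup_{k,\ell}L_{k,\ell}$ with $L_{k,\ell}$ the set of leaf-points where the partial sums stay below $k$ for $\textup{Re}\,t>1/\ell$, uses Proposition~\ref{asympholo} to get $\|q_m\|_{L_{k,\ell}}\le k$, applies the quasi-homogeneous Bernstein--Walsh inequality $|q_m|\le\|q_m\|_{L_{k,\ell}}\cdot(\Psi^*_{L_{k,\ell},\lambda})^{\rho_m}$, and then invokes the continuity-from-above of $E\mapsto\Psi^*_{E,\lambda}$ (Theorem~\ref{increasingunion2}) to pass to $\Psi^*_{S^X_0(F),\lambda}$. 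The set $F'_\lambda$ enters only through Proposition~\ref{regularnonpp}, which shows that regularity is equivalent to $S^X_0(F)$ being nonpluripolar, hence $\Psi^*_{S^X_0(F),\lambda}\in H_\lambda$ and $\Omega\ne\emptyset$. Your route via $\tilde q_d$ on $\mathbb{C}^{n-1}$ could plausibly produce \emph{some} neighborhood of $0$, but identifying it as exactly $\{\Psi^*_{S^X_0(F),\lambda}<1\}$ would require a dictionary between the classical Siciak function of $F'_\lambda$ and $\Psi^*_{S^X_0(F),\lambda}$ that you have not supplied; that dictionary is effectively what Section~\ref{Sect properties of ext ftns} builds, and the paper then bypasses the detour entirely.

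Your extension step has a genuine gap. You cannot get ``$f$ holomorphic along every leaf of $S^X_0(U)$'' by ``passing to the limit'' from condition~(2): that condition says nothing about $z\in U\setminus\bar F$, pointwise limits of leaf-holomorphic functions need not be holomorphic without uniform control, and $f$ is not even assumed continuous on $B^n$. Even granting leafwise holomorphy, you would still need a separate-to-joint argument on the open set $S^X_0(U)$. The paper's mechanism (Proposition~\ref{extension}) is different in kind: near each point of the regular leaf one rectifies via the local biholomorphism $\psi(z',t)=(e^{-t},z'_1e^{-\lambda_2 t},\dots,z'_{n-1}e^{-\lambda_n t})$, so that the leaves of $S^X_0(F)$ become the discs $\{z'\}\times\cdot$ with $z'$ ranging over $F'_\lambda$, and then applies Shiffman's Hartogs-type theorem (Proposition~\ref{ShiffmanHartogs}): a function holomorphic on $U\times P^n(0;r)$ that extends disc-wise over a locally $L$-regular set of parameters extends jointly over an open neighborhood. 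This is the place where local $L$-regularity of $F'_\lambda$ is genuinely used, and it is a propagation-of-analyticity theorem, not a limiting argument. Once $f$ is honestly holomorphic on the $\lambda$-balanced open set $\Omega\cup S^X_0(U)$, the passage to $\hat\Omega$ via (\ref{domofholoext}) and Theorem~\ref{smallestdomainofholo} is as you describe.
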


Here, the asterisk denotes the upper-semicontinuous regularization (\ref{uscregular}). As each point of a nonempty open subset $U$ of $S^{2n-1}$ generates a leaf that is both nonsparse and regular, $S^X_0(U)$ is always a Forelli suspension. This in particular improves the following 

\begin{theorem}[Kim-Poletsky-Schmalz \cite{KPS09}] \label{KPS}
	Let $X$ be a diagonalizable vector field of aligned type on $\mathbb{C}^n$. If $f:B^n\to \mathbb{C}$ satisfies the following two conditions
	\begin{enumerate}
		\item $f\in C^{\infty}(0)$, and
		\item $t\in \mathbb{H}\to f\circ \Phi^X(z,t)$ is holomorphic for each $z\in S^{2n-1}$,
	\end{enumerate}
	then $f$ is holomorphic on $B^n$.
\end{theorem}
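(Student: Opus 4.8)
The plan is to obtain Theorem \ref{KPS} as the special case $F = S^{2n-1}$ of Theorem \ref{main theorem}: it suffices to check that the full suspension $S^X_0(S^{2n-1})$ has a nonsparse leaf and a regular leaf, and then to identify the resulting extension domain $\Omega$ as a set containing $B^n$. Both leaf conditions come from the fact that $\bar{F} = S^{2n-1}$ is generated by nonempty open subsets of the sphere. To verify nonsparseness, I would fix any open $U \subset S^{2n-1}$ and any $q \in \mathcal{H}_\lambda$ with $\textup{bideg}\, q = (d_1,d_2)$, $d_2 \neq 0$, vanishing on $\bar{F} \cap U = U$. Because $q$ is quasi-homogeneous of type $\lambda$, the identity $q(\Phi^X(z,t)) = e^{-d_1 t}e^{-d_2\bar t}q(z)$ propagates the vanishing of $q$ along every integral curve issuing from $U$, so $q$ vanishes on the saturate $S^X_0(U)$. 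A short computation with $\Phi^X(z,t) = (z_1 e^{-\lambda_1 t},\dots,z_n e^{-\lambda_n t})$ shows that $(z,t)\mapsto \Phi^X(z,t)$ maps $U\times\mathbb{H}$ onto a nonempty open subset of $\mathbb{C}^n$ (the real part of $t$ contracts while its imaginary part rotates the fibers), so $q$ vanishes on an open set and hence $q\equiv 0$. Thus every leaf of $S^X_0(S^{2n-1})$ is nonsparse.

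For the regular leaf I would examine the $\lambda$-direction set $F'_\lambda$ attached to $F = S^{2n-1}$, i.e.\ the image of the sphere under $(z_1,\dots,z_n)\mapsto (z_2/z_1^{\lambda_2},\dots,z_n/z_1^{\lambda_n})$ off the branch loci. Writing $z_1 = re^{i\alpha}$ and letting $(z_2,\dots,z_n)$ run over the sphere of radius $\sqrt{1-r^2}$, one sees that the differential of this map attains full rank $2n-2$ on an open subset of $S^{2n-1}$, so $F'_\lambda$ has nonempty interior in $\mathbb{C}^{n-1}$; since every interior point of a set is a point of local $L$-regularity (Definition \ref{Lregular}), $S^X_0(S^{2n-1})$ has a regular leaf. (These two verifications are exactly the content of the remark preceding the statement.) With both hypotheses in hand, Theorem \ref{main theorem} applies and yields that $f$ is holomorphic on the $\lambda$-balanced domain of holomorphy $\Omega = \{z : \Psi^{\ast}_{S^X_0(S^{2n-1}),\lambda}(z) < 1\}$.

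It then remains to show $B^n \subset \Omega$, and this is the step I expect to be the genuine obstacle. First I would record that, because each $\lambda_j > 0$, the contracting flow carries $S^{2n-1}\times\mathbb{H}$ exactly onto the punctured ball: for $w\in B^n\setminus\{0\}$ the equation $\sum_j |w_j|^2 e^{2\lambda_j s} = 1$ has a unique solution $s>0$ by monotonicity and the intermediate value theorem, exhibiting $w = \Phi^X(z,s)$ with $z\in S^{2n-1}$, while $\|\Phi^X(z,t)\| < 1$ for $\textup{Re}\,t > 0$ gives the reverse inclusion; hence $S^X_0(S^{2n-1}) = B^n\setminus\{0\}$. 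Now $B^n$ is itself a $\lambda$-balanced domain of holomorphy and $\{0\}$ is pluripolar, so $\Psi^{\ast}_{B^n\setminus\{0\},\lambda}$ should coincide with the $\lambda$-gauge of the ball, whose sublevel set $\{<1\}$ is $B^n$. The clean way to package this is through the lower bound $\Omega \supset B^n(0;\{\rho_\lambda(S^X_0(S^{2n-1}))\}^{\textup{max}(\lambda)})$ of Theorem \ref{main theorem}: it suffices to evaluate $\rho_\lambda(B^n\setminus\{0\}) = 1$. I expect this capacity evaluation to be the crux, as it is precisely the generalization, within the new plurisubharmonic capacity of Definition \ref{definitionofextrftn}, of Siciak's classical computation of the projective capacity and the extremal function of the unit ball; the full sphere being the maximal admissible direction set, it should realize the maximal value $1$.

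Granting $\rho_\lambda(S^X_0(S^{2n-1})) = 1$, the lower bound reads $\Omega \supset B^n(0;1) = B^n$. Since $f$ is given on $B^n$ and Theorem \ref{main theorem} asserts that $f$ is holomorphic throughout $\Omega \supset B^n$, we conclude that $f$ is holomorphic on $B^n$, which is the assertion of Theorem \ref{KPS}. In fact, choosing any nonempty open $U \subset S^{2n-1}$ in the second part of Theorem \ref{main theorem} gives $\hat\Omega \supset \Omega \cup S^X_0(U) \supset B^n$ with holomorphic extension past $\partial B^n$, which is why Theorem \ref{main theorem} genuinely improves the Kim--Poletsky--Schmalz result.
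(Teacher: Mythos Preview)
Your derivation is correct and matches what the paper intends: Theorem~\ref{KPS} is quoted from \cite{KPS09} rather than proved here, and the only argument the paper offers is the remark preceding the statement (together with Corollary~\ref{opensuspension}) that every point of a nonempty open $U\subset S^{2n-1}$ generates a nonsparse and a regular leaf, so that Theorem~\ref{main theorem} applies with $F=S^{2n-1}$.

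One comment on emphasis. You flag the evaluation $\rho_\lambda\bigl(S^X_0(S^{2n-1})\bigr)=1$ as ``the genuine obstacle'' and ``the crux'', but this is in fact a one-line computation already recorded in the proof of Theorem~\ref{normalsuspension}: every $u\in H_\lambda$ satisfies $u(0)=0$, so $\|u\|_{B^n\setminus\{0\}}=\|u\|_{B^n}$, and inequality~(\ref{Hlambdagrowth}) gives $\|u\|_{B^n}=\|u\|_{S^{2n-1}}$; hence $\rho_\lambda(B^n\setminus\{0\})=1$. A cleaner route bypasses the capacity altogether: since $B^n$ is a $\lambda$-balanced domain of holomorphy and $\Psi_{B^n\setminus\{0\},\lambda}=\Psi_{B^n,\lambda}$ (again because $u(0)=0$ for all $u\in H_\lambda$), Proposition~\ref{domainofholo1} yields $\Omega=\{z:\Psi_{B^n,\lambda}(z)<1\}=B^n$ on the nose, not merely $\Omega\supset B^n$.
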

If $X$ is the $\textit{complex Euler vector field}$, i.e., each eigenvalue of $X$ equals $1$, then Theorem \ref{KPS} reduces to the well-known analyticity theorem of Forelli.
\begin{theorem}[Forelli \cite{Forelli77}] \label{Forelli-original}
If $f:B^n\to \mathbb{C}$ satisfies the following two conditions
\begin{enumerate}
	\item $f\in C^{\infty}(0)$, and
	\item $t\in B^1 \to f(tz)$ is holomorphic for each $z\in S^{2n-1}$,
\end{enumerate}
then $f$ is holomorphic on $B^n$.
\end{theorem}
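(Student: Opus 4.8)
The plan is to recognize Forelli's theorem as the special case of Theorem~\ref{main theorem} (equivalently Theorem~\ref{KPS}) in which $X$ is the complex Euler field, $\lambda=(1,\dots,1)$, and $F=S^{2n-1}$: since every point of the nonempty open set $S^{2n-1}$ generates a leaf that is both nonsparse and regular, the hypotheses are met and the conclusion is holomorphy on $\Omega\supset B^n(0;1)=B^n$. Nonetheless, it is instructive to record the self-contained argument that the general machinery implements once specialized to this symmetric situation; it splits, as usual, into a \emph{formal} step (the Taylor series at $0$ has holomorphic type) and a \emph{normality} step (that series converges locally uniformly on $B^n$).

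For the formal step, I would use hypothesis (1): for each $N$ the function $f$ is $C^N$ near $0$, so it admits an asymptotic expansion $f(w)=\sum_{p+q\le N}Q_{p,q}(w)+o(\|w\|^{N})$, where $Q_{p,q}(w)=\sum_{|k|=p,\,|m|=q}a_{km}w^{k}\bar w^{m}$ is bihomogeneous of bidegree $(p,q)$. Substituting $w=tz$ and using $Q_{p,q}(tz)=t^{p}\bar t^{q}Q_{p,q}(z)$ gives $f(tz)=\sum_{p+q\le N}t^{p}\bar t^{q}Q_{p,q}(z)+o(|t|^{N})$. By hypothesis (2) the slice $g_z(t)=f(tz)$ is holomorphic on $B^1$, so its asymptotic expansion in $(t,\bar t)$ carries no $\bar t$; uniqueness of asymptotic expansions forces $Q_{p,q}(z)=0$ for every $z\in S^{2n-1}$ whenever $q\ge 1$, and then $Q_{p,q}\equiv 0$ on $\CC^{n}$ by homogeneity. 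Writing $P_p:=Q_{p,0}$ (a holomorphic homogeneous polynomial of degree $p$), the same computation identifies the Taylor coefficients of $g_z$, so $g_z(t)=\sum_p P_p(z)t^{p}$ on $B^1$; hence $f(w)=\sum_p P_p(w)$ pointwise on $B^n$ and, comparing radii of convergence, $\limsup_p |P_p(z)|^{1/p}\le 1$ for every $z\in S^{2n-1}$.

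The heart of the matter is the normality step: promoting this pointwise bound to the uniform bound $\limsup_p \|P_p\|_{S^{2n-1}}^{1/p}\le 1$, which is exactly what the pointwise-to-uniform gap for the plurisubharmonic functions $v_p:=\tfrac1p\log|P_p|$ obstructs. I would proceed in three moves. First, by Baire category on the complete space $S^{2n-1}$, the closed sets $\{z:v_p(z)\le\epsilon\ \text{for all }p\ge k\}$ cover the sphere, so one of them contains a nonempty open $V$; complexifying to the cone $\hat V=\{\zeta z:z\in V,\ |\zeta|<2\}$ yields a uniform bound $v_p\le C_0$ on an open subset of $\CC^{n}$. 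Second, because $\hat V$ is non-pluripolar, its Siciak extremal function $V_{\hat V}^{\ast}$ is locally bounded, and the Bernstein--Walsh inequality $|P_p(z)|\le \|P_p\|_{\hat V}\,e^{p V_{\hat V}^{\ast}(z)}$ propagates the local bound to a global one, $v_p\le C_1$ on a neighborhood of $\bar B^{n}$ for all large $p$. Third, with this uniform upper bound in hand and the pointwise estimate $\limsup_p v_p(z)\le \log\|z\|$, Hartogs' lemma upgrades the inequality on the compact set $S^{2n-1}$: for every $\epsilon>0$ one has $v_p\le\epsilon$ on $S^{2n-1}$ for all large $p$, i.e.\ $\limsup_p\|P_p\|_{S^{2n-1}}^{1/p}\le e^{\epsilon}$, whence $\le 1$. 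This is precisely the point at which the positivity of the (projective) capacity of the sphere is used, and it is the analogue of this Bernstein--Walsh/extremal-function estimate in the $\lambda$-weighted, partial-sphere setting that requires the new capacity theory built around $\Psi_{E,\lambda}$ and $\rho_\lambda$; I expect it to be the main obstacle.

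Finally, once $\limsup_p\|P_p\|_{S^{2n-1}}^{1/p}\le 1$, homogeneity gives $\|P_p\|_{\overline{B^{n}(0;r)}}=r^{p}\|P_p\|_{S^{2n-1}}\le (r(1+\epsilon))^{p}$; choosing $\epsilon$ with $r(1+\epsilon)<1$ makes $\sum_p P_p$ converge absolutely and uniformly on $\overline{B^{n}(0;r)}$ for every $r<1$. Thus $\sum_p P_p$ is holomorphic on $B^n$, and since it agrees with $f$ pointwise there, $f$ is holomorphic on $B^n$, as claimed.
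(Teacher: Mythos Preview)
Your opening paragraph is exactly how the paper handles Theorem~\ref{Forelli-original}: it is not given an independent proof but is recorded as the Euler-field, $F=S^{2n-1}$ specialization of Theorem~\ref{KPS}, which in turn is subsumed by Theorem~\ref{main theorem} (the paper remarks just before Theorem~\ref{KPS} that every point of a nonempty open $U\subset S^{2n-1}$ generates a leaf that is both nonsparse and regular, so $S^X_0(U)$ is a Forelli suspension).

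The self-contained argument you append is additional to what the paper does, but it is essentially correct and, in fact, tracks the two-step scheme the paper describes in \S1.3 (Step~1 = formal, Step~2 = convergence via Baire/Bernstein--Walsh/Hartogs). Two small points worth tightening: (i) in the formal step, the passage ``uniqueness of asymptotic expansions forces $Q_{p,q}(z)=0$'' is exactly the content of the last sentence of Proposition~\ref{asympTaylor} in the Euler case, so you may simply cite it; (ii) in the Hartogs step you are implicitly using the version with a continuous majorant ($\limsup_p v_p(z)\le\log\|z\|$) rather than a constant---this is standard, but if you prefer the constant-$C$ form of Lemma~\ref{Hartogslemma}, apply it on $B^n(0;1+\delta)$ with $C=\log(1+\delta)$ and then let $\delta\to 0$.
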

 At this point, a few features of Theorem \ref{main theorem} should be worth mentioning. First, the analyticity of the given function $f$ depends on the local behavior of $f$ near the two specific leaves of $S^X_0(F)$. Therefore, the theorem can be regarded as a localization of Theorem \ref{KPS}. Second, the suspension in Theorem \ref{main theorem} needs not to be generated by an open subset of $S^{2n-1}$ in general; we will construct a nowhere dense Forelli suspension in Example \ref{nowhere dense Forelli suspension}. Note also that $\hat{\Omega}$ depends only on $F$ and $X$. Finally, the examples in Section \ref{Sect example} indicate that a formal Forelli suspension is not necessarily normal, nor vice versa. 

 \subsection{Structure of paper, and remarks}
 
 The original version of Forelli's theorem in \cite{Forelli77} is concerned with functions harmonic along the set of complex lines passing through the origin. But as noted in \cite{Stoll80}, the proof arguments in \cite{Forelli77} also imply Theorem \ref{Forelli-original}; if $f:B^n\to \mathbb{C}$ is the given function, then one may proceed in two steps as follows:
 
 \smallskip
 \begin{narrower}
 	
 	\textbf{Step 1.} The formal Taylor series $S_f$ of $f$ is of holomorphic type.
 	
 	\textbf{Step 2.} The formal series $S_f$ converges uniformly on some $B^n(0;r)$.
 	
 \end{narrower}
 \smallskip
\noindent
 Then by Hartogs' lemma (Lemma \ref{Hartogslemma}),  $f\equiv S_f$ is holomorphic on $B^n$.
 
 Several works related to Step 2 originate from a question of Bochner which was answered affirmatively by Zorn \cite{Zorn47}, Ree \cite{Ree49}, Lelong \cite{Lelong51}, and Cho-Kim \cite{CK21}. On the other hand, Siciak provided a complete solution to Leja's problem on the uniform convergence of a formal sum of homogeneous polynomials in \cite{Siciak90}. In the solution, the theory of projective capacity and related extremal function developed in \cite{Siciak82} played crucial roles (cf. \cite{LevenMol88}, \cite{Sadullaev22}).
 
 There have been many attempts to weaken the condition $f\in C^{\infty}(0)$ in Theorem \ref{Forelli-original} to finite differentiability. Although no success was possible (see \cite{JKS16} for counterexamples),  Condition (2) has been generalized successfully to various directions, starting with \cite{Chirka06}. See also \cite{JKS13}, \cite{CK21}. In particular, it was shown in \cite{KPS09} that the set of integral curves of a diagonalizable vector field can replace the set of complex lines in Theorem \ref{Forelli-original} if, and only if, the field is aligned. This was generalized to the case of nondiagonalizable vector fields contracting at the origin in \cite{JKS16}.  Then at this juncture, it would be natural to address the following
 \begin{problem*}
 	Let $\mathcal{F}$ be the set of integral curves of a contracting vector field $X$ of aligned type. Characterize the local properties of a set $\mathcal{F}'\subset \mathcal{F}$ for which the following holds: a function $f:B^n\to \mathbb{C}$ that is $\textup{(}1\textup{)}$ smooth at the origin, and $\textup{(}2\textup{)}$ holomorphic along each curve in $\mathcal{F}'$ is holomorphic on a neighborhood of the origin.
 \end{problem*}

   When $X$ is the complex Euler vector field, the author followed the original steps of Forelli and provided an answer to the problem in \cite{Cho22}. In this paper, we extend the proofs in \cite{Cho22} to the case where $X$ is a general diagonalizable vector field. Once Step 1 is achieved, the proof of Theorem \ref{main theorem} reduces to showing the uniform convergence of a formal sum of quasi-homogeneous polynomials. So following Siciak \cite{Siciak82}, we develop a new capacity theory and use it with the methods in \cite{KPS09},  \cite{CK21} to establish Step 2. Then the conclusion follows from Hartogs' lemma in \cite{Shiffman89} and (\ref{domofholoext}). Although Step 2 can also be settled without the capacity theory as Theorem \ref{Sadullaev} shows, our proofs in particular provide 
  \begin{enumerate}
  	\setlength\itemsep{0.1em}
  	\item a complete characterization of normal suspensions generated by $F_{\sigma}$ sets in $S^{2n-1}$ (Theorem \ref{normalsuspension}), 
  	\item an explicit description of the polynomially convex hull of a $\lambda$-circular set (Theorem \ref{polyhullmain}, see Definition \ref{lambdacircular} for the definition of $\lambda$-circular set), and
  	\item analytic continuation of a holomorphic function on an open set to the smallest $\lambda$-balanced domain of holomorphy containing the open set (Proposition \ref{domainofholo1}, Theorem \ref{smallestdomainofholo}).
  \end{enumerate}

 On the other hand, we remark that we do not know how to carry out the arguments when the given suspension is generated by a nondiagonalizable contracting vector field of aligned type.
\subsection*{Acknowledgement}
The author would like to thank Professor Taeyong Ahn and Dr. Seungjae Lee for their helpful comments. Most parts of the paper were written while the author was supported by the National Research Foundation of Korea (NRF-2018R1C1B3005963,  NRF-2021R1A4A1032418). The author is currently supported by the National Research Foundation of Korea (NRF-2021R1A4A1032418, NRF-2023R1A2C1007227). 
\section*{Statements and Declarations}
\subsection*{Conflict of interests} The author states that there is no conflict of interests.
\subsection*{Data availability statement} This article has not used any associated data.
\section{Asymptotic expansions}\label{Sect asymptotic}
In this section,  we summarize the properties of asymptotic expansions introduced in \cite{KPS09}. In particular, Proposition \ref{asympholo} will replace the classical Cauchy estimate throughout the paper.
\begin{definition}[\cite{KPS09}]
	\normalfont
 Suppose that $\{\rho_j\}$, $j\geq 0$, is a strictly increasing sequence of nonnegative real numbers converging to infinity with $\rho_0=0$ and let $\{n_j\}$, $j\geq 0$,  be a sequence of nonnegative integers. A formal series
	\begin{equation}
		\sum_{j=0}^{\infty}\sum_{k=0}^{n_j}p_{jk}e^{-\mu_{jk}t-\nu_{jk}\bar{t}}
	\end{equation}
	is called an $\textit{asymptotic expansion}$ of a function $f:\mathbb{H}\to \mathbb{C}$ if (1) $\mu_{jk},\,\nu_{jk}\geq 0$, $\mu_{jk}+\nu_{jk}=\rho_j$ for every $j$ and $k$ and (2) for every $n$, we have
	\[
	\bigg|f(t)-\sum_{j=0}^{n}\sum_{k=0}^{n_j}p_{jk}e^{-\mu_{jk}t-\nu_{jk}\bar{t}}\bigg|\,e^{\rho_n\textup{Re}\,t}\to 0
	\]
	as $\textup{Re}\,t\to \infty$ in $\mathbb{H}$.
\end{definition}

It is known that every function $f:\mathbb{H}\to \mathbb{C}$ has at most one asymptotic expansion; see Proposition 2.3 in \cite{KPS09}. Let $z=(z_1,\dots,z_n)=(x_1,y_1,\dots,x_n,y_n)$ be the standard complex coordinate system on $\mathbb{C}^n,$ where $z_j=x_j+iy_j$ for each $j\in \{1,\dots,n\}$. Recall the multi-index notation as follows: 
\begin{align*}
	k=(k_1,\ldots,k_n),~|k|=k_1+\cdots+k_n,~k!=k_1!\cdots k_n!,\,\text{and}\,z^{k}=z_1^{k_1} \cdots z_n^{k_n}.
\end{align*}
We say that $f:B^n\to \mathbb{C}$ has a $\textit{formal Taylor series}$ $S$ at the origin if
\begin{equation}\label{formalseries}
	S=\sum_{j=0}^{\infty}\sum_{|k|+|m|=j}a_{km}z^k\bar{z}^m
\end{equation}
is a formal series such that for every $n$, we have
\[
\Bigg|f(z)-\sum_{j=0}^{n}\sum_{|k|+|m|=j}a_{km}z^k\bar{z}^m \Bigg|=o(\|z\|^n).
\]
If $f\in C^{\infty}(0)$, then $f$ has a formal Taylor series whose coefficients are given as
\begin{equation}\label{formal Taylor series}
	a_{km}:=\frac{1}{k!\,m!}\frac{\partial^{|k|+|m|}f}{\partial z^k\partial \bar{z}^{m}}(0).
\end{equation}
\begin{proposition}[\cite{KPS09}]\label{asympTaylor}
	 If $f:B^n\to \mathbb{C}$ has a formal Taylor series $\textup{(}\ref{formalseries}\textup{)}$ at the origin, then the function $f_z:t\in \mathbb{H}\to f\circ \Phi^{X}(z,t)$ has the asymptotic expansion
	\[
	\sum_{j=0}^{\infty}\Bigg(\sum_{(\lambda,k)+(\lambda,m)=\rho_j}a_{km}z^k\bar{z}^me^{-(\lambda,k)t-(\lambda,m)\bar{t}} \Bigg)
	\]
	on $\mathbb{H}$ for each $z\in \mathbb{C}^n$, where $\{\rho_j\}$ is the increasing sequence of all possible values
	\begin{equation}\label{defofrho_j}
		(\lambda,k)+(\lambda,m)=\lambda_1k_1+\cdots+\lambda_n k_n+\lambda_1 m_1+\cdots +\lambda_nm_n.
	\end{equation}
	Furthermore, if $f_z$ is holomorphic for some $z\in \mathbb{C}^n$, then the asymptotic expansion of $f_z$ does not contain nonholomorphic terms.
\end{proposition}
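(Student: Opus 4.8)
The plan is to read the asymptotic expansion directly off the definition of a formal Taylor series, the only real bookkeeping being to keep apart the two notions of degree in play: the Euclidean degree $|k|+|m|$ and the quasi-homogeneous degree $(\lambda,k)+(\lambda,m)$. Fix $z$ and put $w:=\Phi^{X}(z,t)$. Since each $\lambda_j$ is a positive real, a monomial transforms as $w^k\bar w^m=z^k\bar z^m e^{-(\lambda,k)t-(\lambda,m)\bar t}$, so the $N$-th partial sum of the series in the statement, evaluated at $t$, is exactly $\sum_{(\lambda,k)+(\lambda,m)\le\rho_N}a_{km}w^k\bar w^m$. The idea is to compare it with a genuine Euclidean Taylor polynomial $P_D(w):=\sum_{|k|+|m|\le D}a_{km}w^k\bar w^m$ of $f$. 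Because $(\lambda,k)+(\lambda,m)\ge(\min_j\lambda_j)(|k|+|m|)$, the choice $D:=\lceil\rho_N/\min_j\lambda_j\rceil$ forces every monomial of quasi-degree $\le\rho_N$ to have Euclidean degree $\le D$, whence
\[
f_z(t)-\sum_{(\lambda,k)+(\lambda,m)\le\rho_N}a_{km}w^k\bar w^m=\bigl(f(w)-P_D(w)\bigr)+\sum_{|k|+|m|\le D,\ (\lambda,k)+(\lambda,m)>\rho_N}a_{km}w^k\bar w^m .
\]

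I would then multiply through by $e^{\rho_N\textup{Re}\,t}$ and let $\textup{Re}\,t\to\infty$. For the first summand, the definition of a formal Taylor series gives $|f(w)-P_D(w)|\le\varepsilon(w)\,\|w\|^D$ with $\varepsilon(w)\to0$ as $w\to0$; combined with $\|w\|\le\|z\|\,e^{-(\min_j\lambda_j)\textup{Re}\,t}$ and $D\min_j\lambda_j\ge\rho_N$ this yields $|f(w)-P_D(w)|\,e^{\rho_N\textup{Re}\,t}\le\varepsilon(w)\,\|z\|^D\to0$. For the second summand, every remaining monomial has quasi-degree strictly above $\rho_N$, hence at least $\rho_{N+1}$ (the values $(\lambda,k)+(\lambda,m)$ are precisely the discrete set $\{\rho_j\}$), so after multiplication it is bounded by $|a_{km}z^k\bar z^m|\,e^{-(\rho_{N+1}-\rho_N)\textup{Re}\,t}\to0$, and the sum is finite since $|k|+|m|\le D$. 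This shows the displayed series is \emph{an} asymptotic expansion of $f_z$; uniqueness (Proposition 2.3 of \cite{KPS09}) makes it \emph{the} one.

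For the final assertion, assume $f_z$ is holomorphic on $\mathbb{H}$ and write $T_j(t)$ for the $j$-th block $\sum_{(\lambda,k)+(\lambda,m)=\rho_j}a_{km}z^k\bar z^m e^{-(\lambda,k)t-(\lambda,m)\bar t}$. I would prove by induction on $j$ that $T_j$ contains no term with a genuine $\bar t$-exponent, i.e.\ that $T_j(t)=\bigl(\sum_{(\lambda,k)=\rho_j}a_{k0}z^k\bigr)e^{-\rho_j t}$. Granting this for $i<j$, the function $g:=f_z-\sum_{i<j}T_i$ is holomorphic on $\mathbb{H}$ and still has $\sum_{i\ge j}T_i$ as its asymptotic expansion, so $|g(t)-T_j(t)|\,e^{\rho_j\textup{Re}\,t}\to0$; multiplying by the holomorphic factor $e^{\rho_j t}$ recasts this as: $\psi:=g\,e^{\rho_j t}$ is holomorphic on $\mathbb{H}$ and $\psi(t)-R(\textup{Im}\,t)\to0$ uniformly in $\textup{Im}\,t$ as $\textup{Re}\,t\to\infty$, where $R(\theta)=\sum_q\tilde b_q e^{2iq\theta}$ is a finite exponential sum with frequencies $2q=2(\lambda,m)\in[0,2\rho_j]$ and $\tilde b_q=\sum_{(\lambda,m)=q,\,(\lambda,k)=\rho_j-q}a_{km}z^k\bar z^m$; the claim becomes $\tilde b_q=0$ for $q>0$. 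To prove it, note first that $\psi$ is bounded on $\{\textup{Re}\,t\ge R_0\}$ for $R_0$ large (there it is uniformly close to the bounded function $R$), and that along the line $\textup{Re}\,t=s$ one has $e^{-2iq\,\textup{Im}\,t}=e^{2qs}e^{-2qt}$, so
\[
\frac{1}{2T}\int_{-T}^{T}\psi(s+i\theta)\,e^{-2iq\theta}\,d\theta=\frac{e^{2qs}}{2iT}\int_{s-iT}^{s+iT}\psi(t)\,e^{-2qt}\,dt .
\]
Cauchy's theorem applied to the holomorphic function $\psi\,e^{-2qt}$ over thin vertical rectangles inside $\{\textup{Re}\,t\ge R_0\}$ (whose horizontal sides contribute negligibly as $T\to\infty$, by boundedness of $\psi$) shows that the mean on the right-hand side has a $T\to\infty$ behaviour independent of $s$; meanwhile the left-hand mean tends, as $s\to\infty$, to the coefficient $\tilde b_q$ of $R$, since $\psi(s+i\cdot)-R$ is uniformly small. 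As $e^{2qs}\to\infty$ for $q>0$, these are compatible only if $\tilde b_q=0$, which closes the induction.

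The first half is routine once the two degrees are kept apart; the genuine obstacle is the last assertion, where one must turn the pointwise hypothesis ``$f_z$ is holomorphic'' into a vanishing statement about the coefficients of an asymptotic expansion, an expansion one is not allowed to differentiate termwise. The Fourier--Bohr extraction via Cauchy's theorem sketched above — which uses only the boundedness of $\psi$ far out in $\mathbb{H}$, itself supplied by the asymptotic estimate — is the cleanest device I know for this; the naive alternative of passing to a punctured disk through a period of $f_z$ is available only when the $\lambda_j$ are commensurable.
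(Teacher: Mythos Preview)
The paper does not prove this proposition; it is quoted from \cite{KPS09} without argument, so there is no in-paper proof to compare against. Your proof is correct. The first half is exactly the right bookkeeping: the inequality $(\lambda,k)+(\lambda,m)\ge(\min_j\lambda_j)(|k|+|m|)$ lets you sandwich the quasi-homogeneous partial sum between two Euclidean Taylor polynomials, and the bound $\|w\|\le\|z\|\,e^{-(\min_j\lambda_j)\textup{Re}\,t}$ together with $D\min_j\lambda_j\ge\rho_N$ handles the remainder. The estimates depend only on $\textup{Re}\,t$, so the required uniformity in $\textup{Im}\,t$ comes for free.

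For the second half your Fourier--Bohr extraction is sound, but one phrase wants an extra line. You say the mean $B(s,T):=\frac{1}{2iT}\int_{s-iT}^{s+iT}\psi\,e^{-2qt}\,dt$ ``has a $T\to\infty$ behaviour independent of $s$'', yet the limit is not known to exist a priori. The clean fix is to work with limit points: Cauchy's theorem gives $|B(s_1,T)-B(s_2,T)|\le C|s_1-s_2|/T$, so every subsequential limit of $B(s_1,\cdot)$ is also one of $B(s_2,\cdot)$; meanwhile the limit points of $A(s,T)=e^{2qs}B(s,T)$ lie within $\varepsilon(s):=\sup_\theta|\psi(s+i\theta)-R(\theta)|$ of $\tilde b_q$, so the common limit-point set of $B(s,\cdot)$ is contained in every disk $\{|w-e^{-2qs}\tilde b_q|\le e^{-2qs}\varepsilon(s)\}$ and hence equals $\{0\}$. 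Then $A(s,T)\to 0$, forcing $|\tilde b_q|\le\varepsilon(s)\to 0$. This is a routine tightening of exactly what you sketched, not a missing idea.
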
 
\begin{proposition}[\cite{KPS09}]\label{asympholo}
	Let $f:\mathbb{H}\to \mathbb{C}$ be a holomorphic function with an asymptotic expansion $\sum_{j=0}^{\infty}c_je^{-\mu_jt}$. If $|f|\leq M$, then $|c_j|\leq M$ for each $j$.
\end{proposition}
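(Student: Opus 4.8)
The plan is to recover the coefficient $c_n$ (for an arbitrary fixed $n$) as a Bohr--Fourier coefficient of the bounded function $f$ along a vertical line in $\mathbb{H}$, and then to let that line approach the imaginary axis. The first and central step is to prove, for every $s>0$, the identity
\[
\lim_{T\to\infty}\frac{1}{2T}\int_{-T}^{T}f(s+i\theta)\,e^{i\mu_n\theta}\,d\theta=c_n\,e^{-\mu_n s}.
\]
Granting this, the proposition follows immediately: every integral on the left is bounded in modulus by $\frac{1}{2T}\int_{-T}^{T}|f(s+i\theta)|\,d\theta\le M$, hence $|c_n|\,e^{-\mu_n s}\le M$ for all $s>0$, and letting $s\downarrow 0$ gives $|c_n|\le M$.

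To prove the identity I would write $f=S_n+R_n$ with $S_n(t):=\sum_{j=0}^{n}c_j e^{-\mu_j t}$ and $R_n:=f-S_n$. Since $|e^{-\mu_j t}|\le 1$ on $\mathbb{H}$, the remainder $R_n$ is bounded on $\mathbb{H}$; and by the definition of an asymptotic expansion, $\varepsilon_n(\sigma):=\sup_{\theta\in\RR}|R_n(\sigma+i\theta)|$ satisfies $\varepsilon_n(\sigma)\,e^{\mu_n\sigma}\to 0$ as $\sigma\to\infty$. For the finite sum $S_n$ the averaged integral is computed directly: the $j=n$ term contributes $c_n e^{-\mu_n s}$, while each $j\ne n$ term contributes $c_j e^{-\mu_j s}\,\frac{\sin((\mu_n-\mu_j)T)}{(\mu_n-\mu_j)T}$, which tends to $0$ as $T\to\infty$ because the $\mu_j$ are pairwise distinct. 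So it remains to show that the average of $R_n(s+i\theta)\,e^{i\mu_n\theta}$ over $\theta\in[-T,T]$ tends to $0$.

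Here is the crux. Via the substitution $t=s+i\theta$ that average equals $-\tfrac{i}{2T}\,e^{-\mu_n s}\int_{s-iT}^{s+iT}R_n(t)\,e^{\mu_n t}\,dt$, the integral taken along a vertical segment. As $R_n(t)\,e^{\mu_n t}$ is holomorphic on $\mathbb{H}$, Cauchy's theorem lets me deform this segment rightward to $\{\textup{Re}\,t=s'\}$ for an arbitrary $s'>s$: the two horizontal closing segments at $\textup{Im}\,t=\pm T$ have integrand bounded by $\bigl(\sup_{[s,s']}\varepsilon_n\bigr)e^{\mu_n s'}$, a constant independent of $T$, so they contribute $O(1/T)$ after the division by $2T$; and the shifted vertical segment contributes at most $\varepsilon_n(s')\,e^{\mu_n s'}$ after that division. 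Sending $T\to\infty$ first yields $\limsup_{T\to\infty}$ of the average $\le\varepsilon_n(s')\,e^{\mu_n s'}$ for \emph{every} $s'>s$; since this bound tends to $0$ as $s'\to\infty$ (this is exactly the asymptotic estimate for $R_n$, whose weight is $e^{\mu_n\textup{Re}\,t}$), the $\limsup$ is $0$, as required.

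I expect the only genuinely delicate point to be this last double limit: one first sends $T\to\infty$ with $s'$ fixed, in order to annihilate the horizontal contributions (which depend on $T$ only through the prefactor $1/(2T)$), and only afterwards sends $s'\to\infty$, in order to exploit the rapid decay of $R_n$ deep inside $\mathbb{H}$. A naive one-parameter estimate is not enough, since the factor $e^{\mu_n t}$ amplifies the remainder precisely where $\textup{Re}\,t$ is large; the contour deformation is what decouples this amplification --- evaluated far out, where $R_n=o(e^{-\mu_n\textup{Re}\,t})$ --- from the averaging length $T$. One should also take care that the asymptotic estimate is meant uniformly in $\textup{Im}\,t$, so that $\varepsilon_n(\sigma)\,e^{\mu_n\sigma}\to 0$ indeed holds, and that the a priori boundedness of $R_n$ on all of $\mathbb{H}$ is what makes the horizontal pieces $O(1/T)$.
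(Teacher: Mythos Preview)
The paper does not supply a proof of this proposition; it is quoted from \cite{KPS09} and used as a black box throughout. Your argument is correct and complete: the Bohr--Fourier extraction of $c_n$ along vertical lines, combined with the contour shift $\{\textup{Re}\,t=s\}\to\{\textup{Re}\,t=s'\}$ to push the remainder into the region where the asymptotic estimate $R_n(t)=o(e^{-\mu_n\textup{Re}\,t})$ bites, is the standard way to prove Cauchy-type bounds for generalized Dirichlet series. Your reading of the definition (that the $o$-estimate is uniform in $\textup{Im}\,t$) is the intended one, and your handling of the double limit $T\to\infty$ then $s'\to\infty$ is exactly right; the a~priori boundedness of $R_n$ on $\mathbb{H}$ (from $|f|\le M$ and $|e^{-\mu_j t}|\le 1$) is indeed what makes the horizontal pieces $O(1/T)$.
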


We will use the following lemma in Section \ref{Sect properties of ext ftns}.
\begin{lemma}\label{convergence}
	Let  $(X,\lambda)$ be a a vector field on $\mathbb{C}^n$ and $\{\rho_j\}$ the increasing sequence of all possible values in \textup{(}\ref{defofrho_j}\textup{)} with $m=0$. If $\{a_j\}\subset \mathbb{C}$ is a sequence such that 
	\[
	r:=\limsup\limits_{j\to \infty}|a_j|^{\frac{1}{\rho_j}}<1,
	\]
	 then the series $S:=\sum_{j=1}^{\infty}a_j$ converges to a finite complex number. If $r>1$, then the series diverges.
\end{lemma}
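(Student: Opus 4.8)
The plan is to reduce the lemma to a lattice-point count that controls the growth of the exponents $\rho_j$, after which a Cauchy--root-type comparison finishes both assertions. Set $\delta:=\min\{\lambda_1,\dots,\lambda_n\}>0$. Each $\rho_j$ is a value $(\lambda,k)=\lambda_1k_1+\cdots+\lambda_nk_n$ for some multi-index $k\in\mathbb{Z}_{\ge 0}^n$, and $(\lambda,k)\ge\delta(k_1+\cdots+k_n)$; hence every $k$ with $(\lambda,k)\le R$ has $k_i\le R/\delta$ for all $i$, so
\[
\#\{\,j:\rho_j\le R\,\}\;\le\;\#\{\,k\in\mathbb{Z}_{\ge 0}^n:(\lambda,k)\le R\,\}\;\le\;\big(\lfloor R/\delta\rfloor+1\big)^n .
\]
In particular, writing $P(\ell):=\big((\ell+1)/\delta+1\big)^n$, the number of indices $j$ with $\rho_j\in[\ell,\ell+1)$ is at most $P(\ell)$, a polynomial in $\ell$.

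First I would handle the case $r<1$. Fix $\rho$ with $r<\rho<1$; by definition of the limit superior there is $j_0$ with $|a_j|^{1/\rho_j}<\rho$, hence $|a_j|<\rho^{\rho_j}$, for all $j\ge j_0$. Grouping the indices by the unit interval containing $\rho_j$ and using $\rho^{\rho_j}\le\rho^{\ell}$ whenever $\rho_j\ge\ell$ (valid since $0<\rho<1$),
\[
\sum_{j\ge j_0}|a_j|\;\le\;\sum_{j\ge j_0}\rho^{\rho_j}\;\le\;\sum_{\ell=0}^{\infty}P(\ell)\,\rho^{\ell}\;<\;\infty ,
\]
the last series converging because a polynomial times a geometric term of ratio in $(0,1)$ is summable. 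Thus $\sum_j|a_j|<\infty$, so $S=\sum_{j\ge 1}a_j$ converges absolutely to a finite complex number.

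For the case $r>1$ I would invoke the term test. Choose $q$ with $1<q<r$; then $|a_j|^{1/\rho_j}>q$, i.e.\ $|a_j|>q^{\rho_j}$, for infinitely many $j$. Since $\{\rho_j\}$ is strictly increasing with $\rho_j\to\infty$ and $q>1$, we have $q^{\rho_j}\to\infty$, so $|a_j|\to\infty$ along a subsequence; in particular $a_j\not\to 0$ and the series diverges.

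The only substantive step is the counting estimate of the first paragraph; everything afterward is a routine comparison. Note that this estimate uses nothing beyond the positivity of the $\lambda_j$ (which guarantees $\delta>0$), i.e.\ exactly the standing hypothesis on aligned diagonalizable fields.
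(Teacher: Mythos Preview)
Your proof is correct and follows essentially the same strategy as the paper: both establish that the number of indices $j$ with $\rho_j\le R$ grows at most polynomially in $R$ (a lattice-point count using $\lambda_k\ge\min(\lambda)>0$) and then compare $\sum s^{\rho_j}$ to a polynomial-times-geometric series, while the divergence case is handled by the term test. Your counting argument is in fact a bit cleaner than the paper's, which routes the same estimate through the indices $\ell_j$ with $\rho_{\ell_j}=j$ (using $\lambda_1=1$), but the underlying idea is identical.
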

\begin{proof}
	We first suppose that $r<1$ and prove that $S$ is convergent. Set $s:=\frac{1+r}{2}<1$. By the assumption, there is an integer $N>0$ such that $|a_j|<s^{\rho_j}$ whenever $j\geq N$. So it suffices to show that $\sum_{j=1}^{\infty}s^{\rho_j}$ converges. As $\lambda_1=1$, one can choose an increasing sequence $\{\ell_j\}$ of positive integers such that $\rho_{\ell_j}=j$.  For each positive integer $j$ and $k\in \{1,\dots,n\}$, choose a nonnegative integer $m^{(j)}_k$ such that 
	\begin{equation}\label{inequality}
		\lambda_{k}m_k^{(j)}\leq  j < \lambda_{k}m^{(j)}_k+\lambda_k.
	\end{equation}
Then letting $j=1$ in (\ref{inequality}) and multiplying each side of the inequality by $j$, we obtain 
	$j\lambda_{k}m^{(1)}_k\leq j < j\lambda_{k}m^{(1)}_k+j\lambda_k$. So it follows from the preceding inequalities that 
	\[
	m^{(j)}_k \leq jm^{(1)}_k+j.
	\] 
	Note also that, by (\ref{inequality}), we have  $\lambda_1m_1+\cdots+ \lambda_n m_n\leq j$ only if $m_k\leq m^{(j)}_k$ for each $k\in \{1,\dots,n\}$. Then
	\begin{equation}\label{indexgrowth}
		{\ell_j}\leq \prod\limits_{k=1}^{n} (m^{(j)}_{k}+1)\leq \prod\limits_{k=1}^{n} (jm^{(1)}_k+j+1):=p_n(j),
	\end{equation}
	where $p_n$ is a real polynomial of degree at most $n$. Therefore,  
	\begin{align*}
		\sum\limits_{j=1}^{\infty}s^{\rho_j}&=\sum\limits_{j=1}^{\ell_1}s^{\rho_j}+\sum\limits_{j=\ell_1}^{\infty}s^{\rho_j}\leq \sum\limits_{j=1}^{\ell_1}s^{\rho_j}+\sum\limits_{j=1}^{\infty}(\ell_{j+1}-\ell_j)s^{\rho_{\ell_j}}\\
		&\leq \sum\limits_{j=1}^{\ell_1}s^{\rho_j}+\sum\limits_{j=1}^{\infty}p_n(j+1)\cdot s^j<\infty 
	\end{align*}
	as desired. 
	
	If $r>1$, then one can find a subsequence $\{a_{n_j}\}$ of $\{a_j\}$ such that $|a_{n_j}|>1$ for each $j$. So the series $S$ diverges. 
\end{proof}

\section{Formal Forelli suspensions}
We first settle the following characterization of formal Forelli suspensions.
\begin{theorem}\label{formal Forelli suspension theorem}
	 A suspension is a formal Forelli suspension if, and only if, it has a nonsparse leaf.
\end{theorem}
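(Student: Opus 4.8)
The plan is to prove both implications by reducing statements about the smooth function $f$ to statements about its formal Taylor series $S_f$, and then to statements about individual quasi-homogeneous components. For the direction ``nonsparse leaf $\Rightarrow$ formal Forelli'', suppose $S^X_0(F)$ has an algebraically nonsparse leaf $L_{z_0}$ generated by $z_0\in\bar F$, and let $f\in C^\infty(0)$ be holomorphic along the suspension. By Proposition \ref{asympTaylor}, for each $z\in F$ the function $f_z$ has the asymptotic expansion $\sum_j q_j(z,\bar z)\,e^{-(\lambda,k)t-(\lambda,m)\bar t}$ grouped by the levels $\rho_j$, where $q_j=\sum_{(\lambda,k)+(\lambda,m)=\rho_j}a_{km}z^k\bar z^m$; since $f_z$ is holomorphic for $z\in F$, that same proposition guarantees the expansion contains no nonholomorphic terms, so each nonholomorphic quasi-homogeneous piece of $q_j$ must vanish on $F$, hence on $\bar F$. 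Fix a level $\rho_j$ and write $q_j=\sum_{d_1+d_2=\rho_j} q_{d_1,d_2}$ with $q_{d_1,d_2}\in\mathcal H_\lambda$ of bidegree $(d_1,d_2)$; I would argue that each $q_{d_1,d_2}$ with $d_2\neq 0$ vanishes identically on $\bar F$ near $z_0$ (the distinct scaling weights $e^{-d_1 t-d_2\bar t}$ separate the bidegree components of a single level, just as in the uniqueness-of-asymptotic-expansion argument). Then the nonsparseness hypothesis applied with $U$ any neighborhood of $z_0$ and $q=q_{d_1,d_2}$ forces $q_{d_1,d_2}\equiv 0$ on $\mathbb C^n$; running over all levels gives $a_{km}=0$ whenever $m\neq 0$, i.e. $S_f$ is of holomorphic type.

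For the converse, ``formal Forelli $\Rightarrow$ nonsparse leaf'', I would argue contrapositively: assume the suspension is sparse, meaning every leaf $L_z$, $z\in\bar F$, fails to be algebraically nonsparse. By definition, for each $z\in\bar F$ there is a neighborhood $U_z\subset S^{2n-1}$ and a nonzero $q_z\in\mathcal H_\lambda$ with bidegree $(d_1,d_2)$, $d_2\neq 0$, such that $\bar F\cap U_z\subset Z(q_z)$. The task is to manufacture from these local nonholomorphic quasi-homogeneous polynomials a single smooth function $f$ that is holomorphic along $S^X_0(F)$ but whose formal Taylor series has a nonzero nonholomorphic coefficient. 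The natural candidate is to take one such $q=q_{z_0}$ with $\bar F\cap U\subset Z(q)$, multiply by a smooth cutoff supported near $z_0$ in the appropriate $\lambda$-balanced sense so that the product is supported where $q$ vanishes on $\bar F$, and arrange the construction so that the resulting function is genuinely smooth at the origin with $q$ (or a nonzero multiple of it) appearing as the lowest-order term of its Taylor expansion. Because $q$ vanishes on $\bar F\cap U$ and the cutoff kills everything outside $U$, the restriction of $f$ to each leaf $L_z$ with $z\in F$ will be identically zero (hence trivially holomorphic in $t$), while the asymptotic expansion of $f_z$ at a generic $z$ recovers $q(z,\bar z)e^{-d_1t-d_2\bar t}+\cdots$, which is nonholomorphic; so $S_f$ is not of holomorphic type. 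Some care is needed because $\bar F\cap U$ need not be all of $Z(q)\cap U$, so the cutoff must be chosen to vanish on a neighborhood of the part of $S^{2n-1}\cap U$ where $q\neq 0$, and one then extends homogeneously along the flow; I expect this gluing to be the main technical obstacle, since one must simultaneously guarantee $C^\infty$ regularity at $0$, holomorphicity along every leaf in $F$ (not just those meeting $U$), and non-vanishing of the offending Taylor coefficient. A compactness argument on $\bar F\subset S^{2n-1}$, covering it by finitely many $U_{z_i}$ and taking a suitable combination or product of the associated $q_{z_i}$'s against a partition of unity, should make the construction global while preserving sparseness of the obstruction.

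A secondary point to verify carefully is the separation-of-bidegrees step in the forward direction: within a fixed level $\rho_j$, the terms $z^k\bar z^m e^{-(\lambda,k)t-(\lambda,m)\bar t}$ with $(\lambda,k)=d_1$, $(\lambda,m)=d_2$ all share the exponential factor $e^{-d_1t-d_2\bar t}$, and different pairs $(d_1,d_2)$ summing to $\rho_j$ give different such factors on $\mathbb H$; so the vanishing of the whole nonholomorphic part of $q_j$ on $\bar F$ does indeed force each $q_{d_1,d_2}$ to vanish on $\bar F$ individually, by the same linear-independence reasoning that underlies uniqueness of asymptotic expansions (Proposition 2.3 of \cite{KPS09}). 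With that in hand, the nonsparseness hypothesis upgrades vanishing-on-$\bar F$-near-$z_0$ to vanishing-on-all-of-$\mathbb C^n$, which is exactly what ``holomorphic type'' requires. The overall structure is thus: asymptotic expansion (Proposition \ref{asympTaylor}) to pass from $f$ to quasi-homogeneous components, bidegree separation to isolate the nonholomorphic obstructions, and the algebraic nonsparseness condition to kill them — with the converse supplied by an explicit cutoff-and-flow construction.
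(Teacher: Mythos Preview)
Your forward direction (nonsparse $\Rightarrow$ formal Forelli) is correct and matches the paper's argument; the bidegree separation you worry about is precisely what Proposition~\ref{asympTaylor} (together with uniqueness of asymptotic expansions) delivers, so the paper simply writes down $S^\mu_\nu(z)=\sum_{(\lambda,k)=\mu,\,(\lambda,m)=\nu}a_{km}z^k\bar z^m=0$ for all $z\in F$ when $\nu\neq 0$ and immediately applies nonsparseness.

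For the converse you make life much harder than necessary, and the cutoff route you sketch has a real obstruction. A cutoff that is ``$\lambda$-balanced'' (constant along leaves) and not globally constant cannot be continuous at the origin, since every leaf accumulates there; so $\chi\cdot q$ will not be $C^\infty(0)$ in any straightforward way, and even if you could arrange smoothness you would still have to check that a nonholomorphic Taylor coefficient survives after the cutoff kills $f$ on many leaves. The paper avoids all of this with a one-line construction: cover the compact set $\bar F$ by finitely many $U_{v_1},\dots,U_{v_m}$ and take the \emph{product} $q:=q_{v_1}\cdots q_{v_m}$. This is itself a polynomial (hence $C^\infty$ everywhere, including at $0$), lies in $\mathcal H_\lambda$ with bidegree $(d_1',d_2')$ where $d_2'=\sum d_2^{(i)}\neq 0$, and vanishes on all of $\bar F$; thus $q\circ\Phi^X(z,\cdot)\equiv 0$ is trivially holomorphic for every $z\in F$, while the Taylor series of $q$ at $0$ is $q$ itself and is not of holomorphic type. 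No partition of unity, no gluing, no smoothness issue at the origin. You essentially had this idea in your last sentence (``compactness\dots product of the associated $q_{z_i}$'s''), but the phrase ``against a partition of unity'' is the wrong instinct here---the product alone already does the job.
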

\begin{proof}
	
First, we prove that $S^X_0(F)$ is not a formal Forelli suspension under the assumption that $S^X_0(F)$ is sparse. Then for each $v\in \bar{F}$, there exist an open neighborhood $U_v\subset S^{2n-1}$ of $v$ and a polynomial $q_v\in \mathcal{H}_{\lambda}$ with $\textup{bideg}\,q_v=(d_1,d_2)$, $d_2\neq 0$, such that $q_v\equiv 0$ on $\bar{F}\cap U_v$. Since $\mathcal{U}:=\{U_v:v\in \bar{F}\}$ is an open cover of the compact set $\bar{F}$, there is a finite subcover $\{U_{v_1},\dots,U_{v_m}\}$ of $\mathcal{U}$. Then the polynomial $q:=q_{v_1}\cdot q_{v_2}\cdots q_{v_m}\in \mathcal{H}_{\lambda}$ has a fixed bidegree $(d'_1,d'_2)$, $d'_2\neq 0$. Note that $q$ is smooth and holomorphic along $S^X_0(F)$ but it is not of holomorphic type.
	
	Conversely, suppose that $S^X_0(F)$ has a nonsparse leaf and let $f:B^n\to \mathbb{C}$ be a function satisfying the following two conditions:
	\begin{enumerate}
		\item $f\in C^{\infty}(0)$, and
		\item $f$ is holomorphic along $S^X_0(F)$.
	\end{enumerate}
	Then we are to show that $f$ has a formal Taylor series 
	$S$ of holomorphic type. Recall that each coefficient $a_{km}$ of $S$ is given by (\ref{formal Taylor series}). By Proposition \ref{asympTaylor}, we have
	\begin{equation}\label{linear system}
		S_{\nu}^{\mu}(z):=\sum_{\substack{(\lambda,k)=\mu \\ (\lambda,m)=\nu }} a_{km}z^k\bar{z}^{m}=0~\forall z\in F
	\end{equation}
	for each fixed $\mu$ and $\nu \neq 0$. As the suspension $S^X_0(F)$ has a nonsparse leaf and the polynomial $S_{\nu}^{\mu}\in \mathcal{H}_{\lambda}$ with bidegree $(\mu, \nu)$ vanishes on $F$, we have $S_{\nu}^{\mu}\equiv 0$ on $\mathbb{C}^n$. Therefore, $a_{km}=0$ whenever $m\neq 0$ as desired.
\end{proof}

\begin{corollary}\label{opensuspension}
	If $U\subset S^{2n-1}$ is a nonempty open set, then $S^X_0(U)$ is a formal Forelli suspension for any $X$.
\end{corollary}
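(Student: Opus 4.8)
The plan is to deduce this at once from Theorem \ref{formal Forelli suspension theorem}. That result identifies formal Forelli suspensions with exactly the suspensions possessing a nonsparse leaf, so it suffices to exhibit one nonsparse leaf of $S^X_0(U)$. I would take the leaf $L_z$ generated by an arbitrary point $z\in U$; since $U\subset\bar U$, this $z$ is an admissible generator in the sense of Definition \ref{def.susp.}. Thus the entire content reduces to the assertion that \emph{every point of a nonempty open subset of $S^{2n-1}$ generates an algebraically nonsparse leaf}.

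To prove that, fix an open neighbourhood $V\subset S^{2n-1}$ of $z$ (the letter $U$ in Definition \ref{def.susp.} being already in use) and a polynomial $q\in\mathcal{H}_{\lambda}$ with $\textup{bideg}\,q=(d_1,d_2)$, $d_2\neq 0$, satisfying $\bar U\cap V\subset Z(q)$. Because $U$ and $V$ are open and both contain $z$, the set $U\cap V$ is a nonempty relatively open subset of $S^{2n-1}$ on which $q$ vanishes, and the task is to upgrade this to $q\equiv 0$ on $\mathbb{C}^n$. The mechanism is to thicken $U\cap V$ to an honest open subset of $\mathbb{C}^n$ by flowing along $X$. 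First, since $q(\Phi^X(z,t))$ and $e^{-d_1 t-d_2\bar t}q(z)$ are real-analytic in $t$ and agree on $\mathbb{H}$, the quasi-homogeneity relation extends to all $t\in\mathbb{C}$; in particular $q$ vanishes on $\{\Phi^X(w,s):w\in U\cap V,\ s\in\mathbb{R}\}$. Second, as every $\lambda_j>0$, the function $s\mapsto\sum_j|v_j|^2 e^{2\lambda_j s}$ is strictly increasing from $0$ to $\infty$ for each fixed $v\neq 0$, so $(w,s)\mapsto\Phi^X(w,s)$ is a bijection of $S^{2n-1}\times\mathbb{R}$ onto $\mathbb{C}^n\setminus\{0\}$; a short check of its differential (the orbits meet $S^{2n-1}$ transversally because $\textup{Re}\sum_j(-\lambda_j w_j)\bar w_j=-\sum_j\lambda_j|w_j|^2<0$) shows it is a diffeomorphism. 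Hence $\Phi^X((U\cap V)\times\mathbb{R})$ is open in $\mathbb{C}^n$, and a polynomial vanishing on a nonempty open subset of $\mathbb{R}^{2n}$ is identically zero. Therefore $q\equiv 0$, the leaf $L_z$ is nonsparse, and Theorem \ref{formal Forelli suspension theorem} finishes the argument.

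I do not expect any real obstacle: the only step carrying content is the thickening argument, and even there the input is just the elementary fact that the real orbits of an aligned diagonalizable field foliate $\mathbb{C}^n\setminus\{0\}$ transversally to the sphere. If one prefers to avoid the diffeomorphism statement entirely, it is enough to observe that each $\Phi^X(\cdot,s)$ is a real-linear automorphism of $\mathbb{C}^n$ and that the orbits are transverse to $S^{2n-1}$, which already forces the flowed-out set to have nonempty interior.
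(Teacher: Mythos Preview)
Your argument is correct and follows essentially the paper's route: both reduce to Theorem~\ref{formal Forelli suspension theorem} and verify that any $z\in U$ generates a nonsparse leaf by flowing $U\cap V$ out along $X$ to an open subset of $\mathbb{C}^n$ on which $q$ must vanish, forcing $q\equiv 0$. The only difference is cosmetic---the paper flows with $t\in\mathbb{H}$ and cites the rectification theorem for the openness of $S^X_0(U\cap V)$, whereas you flow with real $s$ and supply the transversality/diffeomorphism argument explicitly; note incidentally that with the paper's sign convention $\|\Phi^X(v,s)\|^2=\sum_j|v_j|^2e^{-2\lambda_js}$ is strictly \emph{decreasing} rather than increasing, though this does not affect your bijectivity claim.
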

\begin{proof}
	We show that any point $z\in U$ generates a nonsparse leaf of $S^X_0(U)$. Let $q\in \mathcal{H}_{\lambda}$ be a polynomial with $\textup{bideg}\,q=(d_1,d_2),$ $d_2\neq 0$. If $q\equiv 0$ on $U\cap V$ for a nonempty open neighborhood $V\subset S^{2n-1}$ of $z$, then $q \equiv 0$ on $S^X_0(U\cap V)$. Since $S^X_0(U\cap V)$ is open by the rectification theorem \cite{IY07} and $\textup{Re}\,q,\,\textup{Im}\,q$ are real-analytic, we have $q\equiv 0$ on $\mathbb{C}^n$. 
\end{proof}

 For convenience, we say that $\lambda=(\lambda_1,\dots,\lambda_n)$ is linearly (in)dependent over the ring $\mathbb{Z}$ of integers if the set $\{\lambda_1,\dots,\lambda_n\}$ is so.
\begin{proposition}\label{transcendentalsparseness}
	 Let $F\subset S^{2n-1}$ be a nonempty set and $(X,\lambda)$ a vector field on $\mathbb{C}^n$. If $\lambda$ is linearly independent over $\mathbb{Z}$, then any point $w=(w_1,\dots,w_n)$ $\in \bar{F}$ satisfying $w_k\neq 0$ for each $k$ generates a nonsparse leaf of $S^{X}_0(F)$. Conversely, if $\lambda$ is linearly dependent over $\mathbb{Z}$, then there exists a sparse suspension $S^X_0(G)$ containing $\Big(\frac{e^{-\lambda_1}}{\sqrt{n}},\dots,\frac{e^{-\lambda_n}}{\sqrt{n}}\Big)$.
\end{proposition}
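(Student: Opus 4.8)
The plan is to reduce both directions to the elementary structural description of $\mathcal{H}_\lambda$: since $\lambda_1=1$ and each $\lambda_k>0$ is real, every monomial satisfies $(z^k\bar z^m)\circ\Phi^X(\cdot\,,t)=e^{-(\lambda,k)t-(\lambda,m)\bar t}\,z^k\bar z^m$, so comparing the two sides of the identity $q(\Phi^X(z,t))=e^{-d_1t-d_2\bar t}q(z)$ first as functions of $t\in\mathbb H$ and then as functions of $z$ (distinct exponentials $e^{-at-b\bar t}$, and distinct monomials $z^k\bar z^m$, being linearly independent) shows that $q=\sum c_{km}z^k\bar z^m$ belongs to $\mathcal{H}_\lambda$ with $\mathrm{bideg}\,q=(d_1,d_2)$ if and only if $c_{km}=0$ whenever $(\lambda,k)\neq d_1$ or $(\lambda,m)\neq d_2$. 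I record this first. Throughout I read ``$\lambda$ linearly (in)dependent over $\mathbb Z$'' as: $(a,\lambda)=0$ for some (no) $a\in\mathbb Z^n\setminus\{0\}$, which is also equivalent to the same statement over $\mathbb Q$.

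Assume first that $\lambda$ is linearly independent over $\mathbb Z$. Then for a prescribed value $d_1$ (resp.\ $d_2$) there is at most one multi-index $k$ (resp.\ $m$) of nonnegative integers with $(\lambda,k)=d_1$ (resp.\ $(\lambda,m)=d_2$), since $(\lambda,k-k')=0$ and $k-k'\in\mathbb Z^n$ force $k=k'$. By the structural description every nonzero $q\in\mathcal{H}_\lambda$ of bidegree $(d_1,d_2)$ is a single monomial $q=c\,z^{\kappa}\bar z^{\mu}$ with $c\neq0$, and $d_2\neq0$ together with $\lambda_j>0$ forces $\mu\neq0$. Now let $w=(w_1,\dots,w_n)\in\bar F$ have $w_j\neq0$ for all $j$, let $U\subset S^{2n-1}$ be an open neighborhood of $w$, and let $q\in\mathcal{H}_\lambda$ with $\mathrm{bideg}\,q=(d_1,d_2)$, $d_2\neq0$, satisfy $\bar F\cap U\subset Z(q)$. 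Since $w\in\bar F\cap U$ we have $q(w)=0$; if $q\not\equiv0$ then $q=c\,z^{\kappa}\bar z^{\mu}$ as above, whence $q(w)=c\,w^{\kappa}\bar w^{\mu}\neq0$, a contradiction. Thus $q\equiv0$, so $L_w$ is a nonsparse leaf of $S^X_0(F)$.

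Assume now that $\lambda$ is linearly dependent over $\mathbb Z$, and fix $a=(a_1,\dots,a_n)\in\mathbb Z^n\setminus\{0\}$ with $\sum_j a_j\lambda_j=0$. Put $k_j:=\max(a_j,0)$ and $m_j:=\max(-a_j,0)$, so that $k,m\in\mathbb Z_{\geq 0}^n$ have disjoint supports, are distinct (as $a\neq0$), and $(\lambda,k)=(\lambda,m)=:d$ with $d>0$ since all $\lambda_j>0$. Set $q(z):=z^k\bar z^m-z^m\bar z^k$. By the structural description $q\in\mathcal{H}_\lambda$ with $\mathrm{bideg}\,q=(d,d)$, $d\neq0$, and $q\not\equiv0$ because $z^k\bar z^m$ and $z^m\bar z^k$ are distinct monomials and distinct monomials in $z,\bar z$ are linearly independent as functions on $\mathbb C^n$. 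Let $v_0:=\bigl(\tfrac1{\sqrt n},\dots,\tfrac1{\sqrt n}\bigr)\in S^{2n-1}$; as $v_0$ is real and positive, $v_0^k\bar v_0^m=n^{-(|k|+|m|)/2}=v_0^m\bar v_0^k$, so $q(v_0)=0$. Take $G:=\{v_0\}$ (one could equally take $G:=Z(q)\cap S^{2n-1}$). The only leaf of $S^X_0(G)$ is $L_{v_0}$, and with $U=S^{2n-1}$ we have $\bar G\cap U=\{v_0\}\subset Z(q)$ while $q\not\equiv0$; hence $L_{v_0}$ is not a nonsparse leaf, and therefore $S^X_0(G)$ is sparse. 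Finally $1\in\mathbb H$ and $\Phi^X(v_0,1)=\bigl(\tfrac1{\sqrt n}e^{-\lambda_1},\dots,\tfrac1{\sqrt n}e^{-\lambda_n}\bigr)\in S^X_0(G)$, which is the required point.

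The only genuinely delicate step is the construction in the converse direction: one needs a \emph{nonzero} element of $\mathcal{H}_\lambda$ of some bidegree $(d_1,d_2)$ with $d_2\neq0$ that vanishes at a point all of whose coordinates are nonzero, and neither a single monomial nor a generic difference of monomials will serve --- the latter is typically not quasi-homogeneous. The resolution is that $(\lambda,k)=(\lambda,m)$ makes the \emph{symmetric} difference $z^k\bar z^m-z^m\bar z^k$ quasi-homogeneous of bidegree $(d,d)$, while its zero set $\{z:z^k\bar z^m\in\mathbb R\}$ automatically contains the real point $v_0$. Everything else is routine bookkeeping with the description of $\mathcal{H}_\lambda$.
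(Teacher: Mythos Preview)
Your proof is correct and follows essentially the same approach as the paper. The first part is identical: linear independence of $\lambda$ forces any $q\in\mathcal{H}_\lambda$ of fixed bidegree to be a single monomial, which cannot vanish at a point with all coordinates nonzero. In the second part your polynomial $q(z)=z^k\bar z^m-z^m\bar z^k$ is, up to a factor of $2i$, exactly the paper's $\operatorname{Im}(z^k\bar z^m)$; the only cosmetic difference is that you take $G=\{v_0\}$ while the paper takes $G$ to be the full set of real points on $S^{2n-1}$, which is a harmless economy on your side.
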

\begin{proof}
	Let $U$ be an open neighborhood of $w$ in $S^{2n-1}$ and choose $q\in \mathcal{H}_{\lambda}$ with $\textup{bideg}\,q=(d_1,d_2)$, $d_2\neq0$ such that $q\equiv 0$ on $\bar{F}\cap U$. Then $q$ is a finite sum of monomials taken over all multi-indices $k,m$ satisfying 
	\begin{equation*}
		\begin{cases}
			\lambda_1 k_1+\cdots+\lambda_n k_n =d_1 \\
			\lambda_1 m_1+\cdots+\lambda_n m_n =d_2.
		\end{cases}
	\end{equation*}
 So it follows from the linear independence of $\lambda$ that the equation has a unique solution if any exists. Therefore, $q$ is a monomial and the condition $q(w)=0$ implies that $q\equiv 0$ on $\mathbb{C}^n$ as desired.
	
	Suppose that $\lambda$ is linearly dependent over $\mathbb{Z}$. Then one can assume that there exist nonnegative integers $1\leq r<s\leq n,$ $\alpha_1,\dots,\alpha_r,$ $\beta_{r+1},\dots,\beta_{s}$ such that $\alpha_k\neq 0,\,\beta_{\ell}\neq 0$ for some $k,\ell$, and
	\[
	\alpha_1 \lambda_1+\cdots+\alpha_r \lambda_r = \beta_{r+1}\lambda_{r+1}+\cdots+\beta_s \lambda_s:=\gamma>0.
	\]
	Define $G:=\{(z_1,\dots,z_n)\in S^{2n-1}:\textup{Im}\,z_i=0\;\text{for each}\;i\in \{1,\dots,n\}\}$ and
	\[
	q(z):=\textup{Im}\,(z^{\alpha_1}_1\cdots z^{\alpha_r}_r \cdot \bar{z}^{\beta_{r+1}}_{r+1}\cdots \bar{z}_s^{\beta_s})\in \mathbb{C}[z_1,\dots,z_n,\bar{z}_1,\dots,\bar{z}_n]
	\]
	so that $q\in \mathcal{H}_{\lambda}$ and $\textup{bideg}\,q=(\gamma,\gamma).$ Since $q\equiv 0$ on $G$, $S^X_0(G)$ is sparse. Choose
	\[
	z_0:=\Big(\frac{1}{\sqrt{n}},\dots,\frac{1}{\sqrt{n}}\Big)\in G.
	\]
	 Then $\Phi^X(z_0,1)=\Big(\frac{e^{-\lambda_1}}{\sqrt{n}},\dots,\frac{e^{-\lambda_n}}{\sqrt{n}}\Big)\in S^X_0(G)$ and this completes the proof.
\end{proof}
\begin{corollary}
	Let $X$ be a vector field on $\mathbb{C}^n$ with eigenvalues $(1,\lambda,\lambda^2,\dots,\lambda^{n-1})$, where $\lambda>0$  is a transcendental number. If a point $w=(w_1,\dots,w_n)\in \bar{F}\subset S^{2n-1}$ satisfies $w_k\neq 0$ for each $k$, then $w$ generates a nonsparse leaf of $S^{X}_0(F)$. The statement also holds if $n=2$ and $\lambda$ is a positive irrational number.
\end{corollary}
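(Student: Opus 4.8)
The plan is to obtain this corollary as an immediate specialization of Proposition~\ref{transcendentalsparseness}: that proposition already guarantees that a point of $\bar F$ with all coordinates nonzero generates a nonsparse leaf as soon as the eigenvalue tuple is linearly independent over $\mathbb{Z}$, so the only thing left to verify is that $(1,\lambda,\lambda^2,\dots,\lambda^{n-1})$ has this independence property under the stated hypotheses on $\lambda$.

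For the first assertion, I would argue as follows. Suppose $c_0,\dots,c_{n-1}\in\mathbb{Z}$ are not all zero and satisfy $c_0+c_1\lambda+\cdots+c_{n-1}\lambda^{n-1}=0$. Then $\lambda$ is a root of the nonzero polynomial $P(x)=\sum_{k=0}^{n-1}c_kx^k\in\mathbb{Z}[x]\subset\mathbb{Q}[x]$, so $\lambda$ is algebraic over $\mathbb{Q}$, contradicting the assumption that $\lambda$ is transcendental. Hence no such relation exists; note also that the powers $1,\lambda,\dots,\lambda^{n-1}$ are pairwise distinct, since a coincidence $\lambda^i=\lambda^j$ with $i\neq j$ would likewise make $\lambda$ algebraic, so the set $\{1,\lambda,\dots,\lambda^{n-1}\}$ is linearly independent over $\mathbb{Z}$ in the sense used in the paper. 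Proposition~\ref{transcendentalsparseness} then applies verbatim to $F$ and the vector field $(X,\lambda)$ with $\lambda=(1,\lambda,\dots,\lambda^{n-1})$, yielding that every $w\in\bar F$ with all coordinates nonzero generates a nonsparse leaf of $S^X_0(F)$.

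For the last sentence, with $n=2$ the eigenvalue tuple is just $(1,\lambda)$ with $\lambda>0$ irrational. A nontrivial integer relation $c_0+c_1\lambda=0$ forces $c_1\neq0$, for otherwise $c_0=0$ as well, whence $\lambda=-c_0/c_1\in\mathbb{Q}$, contradicting irrationality; so $\{1,\lambda\}$ is linearly independent over $\mathbb{Z}$ and again Proposition~\ref{transcendentalsparseness} gives the claim.

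I do not expect any genuine obstacle here: the corollary is a direct reading of Proposition~\ref{transcendentalsparseness} once one records the elementary fact that the successive powers of a transcendental number (respectively $\{1,\lambda\}$ for an irrational $\lambda$) are $\mathbb{Z}$-linearly independent. The only points needing a line of care are the degenerate cases in the relations above, which are immediate.
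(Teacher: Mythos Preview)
Your proposal is correct and matches the paper's intent: the corollary is stated without proof immediately after Proposition~\ref{transcendentalsparseness}, so it is meant to follow directly from that proposition once one observes that the powers $1,\lambda,\dots,\lambda^{n-1}$ of a transcendental number (respectively $\{1,\lambda\}$ for an irrational $\lambda$) are linearly independent over~$\mathbb{Z}$, exactly as you argue.
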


 The following example, together with Proposition \ref{transcendentalsparseness}, illustrates that the sparseness of $S^X_0(F)$ depends on both $F$ and $X$.
\begin{example}
	\normalfont 
	Identify $\mathbb{R}^4$ with $\{(z_1,z_2,z_3)\in \mathbb{C}^3: \textup{Im}\,z_1=\textup{Im}\,z_2=0\}$ and define
	\begin{gather*}
		F:=\{(x,y,z)\in \mathbb{R}^4\cap S^{5}:\,x,y\in \mathbb{R},\,z\in \mathbb{C}\}.
	\end{gather*}
    Let $(X,\lambda)$ be a vector field on $\mathbb{C}^3$ with eigenvalues $\lambda=(1,\lambda_2,\lambda_3).$ If $\lambda_2$ is a positive integer, then $S^{X}_0(F)$ is sparse as $F\subset Z(z^{\lambda_2}_1\bar{z}_2-\bar{z}^{\lambda_2}_1z_2)$. 
	
	Now suppose that $\lambda_2$ is irrational. Then we show that $v:=(1,0,0)\in F$ generates a nonsparse leaf of $S^X_0(F)$. Choose an open neighborhood $U$ of $v=(1,0,0)\in F$ in $S^5$ and suppose that $S^X_0(\bar{F}\cap U)\subset Z(q)$ for some $q\in \mathcal{H}_{\lambda}$ with $\textup{bideg}\,q=(d_1,d_2)$, $d_2\neq 0$. Note that $q$ can be written as	
	\begin{equation}\label{identity}
		q=\sum_{{\substack{(\lambda,k)=d_1 \\ \substack{(\lambda,m)=d_2}}} }a_{km}\, w^k\bar{w}^m,\, w\in \mathbb{C}^3
	\end{equation}
	where $\{a_{km}\}$ is a finite set of complex numbers. Then we are to show that $q\equiv 0$ on $\mathbb{C}^3$. By the given assumption, we have $q(x,y,z)=0$ for each $(x,y,z)\in F\cap U$ and this translates into the following equation:
     \begin{equation}\label{vanishing2}
     	\sum_{{\substack{(\lambda,k)=d_1 \\ \substack{(\lambda,m)=d_2}}} }a_{km}\,x^{k_1+m_1}y^{k_2+m_2}z^{k_3}\bar{z}^{m_3}=0
     \end{equation}
     for each $(x,y,z)\in F\cap U.$ Choose nonnegative integers $r_1,r_2,s_1,s_2$. By the identity theorem for polynomials, (\ref{vanishing2}) reduces to the equation $\sum a_{km}=0$, where the sum is taken over all multi-indices $k,m$ satisfying
	\begin{equation}\label{vanishing3}
		\begin{cases}
			k_1+\lambda_2 k_2+\lambda_3 k_3 = d_1 \\
			m_1+\lambda_2 m_2+\lambda_3 m_3 = d_2 \\
			k_1+m_1 = r_1\\
			k_2+m_2 = r_2\\
			k_3 = s_1 \\
			m_3 = s_2.
		\end{cases}
	\end{equation}
Since $(1,\lambda_2)$ is linearly independent over $\mathbb{Z}$, (\ref{vanishing3}) has a unique solution if any exists. So the equation $\sum a_{km}=0$ implies that $a_{km}=0$ for each $k,m$ appearing in (\ref{identity}). Therefore, $q\equiv 0$ on $\mathbb{C}^3$ and $S^X_0(F)$ is nonsparse as desired.
\end{example}

\section{Pluripotential theory}\label{Sect properties of ext ftns}

\begin{definition}\label{definitionofextrftn}
	\normalfont
	Let $(X,\lambda)$ be a vector field on $\mathbb{C}^n$. Define a set  $H_{\lambda}$ of nonconstant functions as
	\[
	H_{\lambda}:=\{u\in \textup{PSH}(\mathbb{C}^n): u\geq 0~\text{on}~\mathbb{C}^n, \,u(\Phi^{X}(z,t))=e^{-\textup{Re}\,t}\cdot u(z)\; \forall z\in \mathbb{C}^n, t\in \mathbb{C}\}.
	\]
	For each bounded subset $E$ of $\mathbb{C}^n$, define
	\[
	\Psi_{E,\lambda}(z):=\textup{sup}\,\{u(z):u\in H_{\lambda},\; u\leq 1~\text{on}~E\}~\text{for each}~z\in \mathbb{C}^n.
	\]
	If $E$ is unbounded, then we set
	\[
	\Psi_{E,\lambda}(z):=\textup{inf}\,\{\Psi_{F,\lambda}(z): F\subset E~\text{is bounded} \}~\text{for each}~z\in \mathbb{C}^n.
	\]	
	The $\lambda\textit{-projective capacity}$ of a set $E\subset \mathbb{C}^n$ is defined as
	\[
	\rho_{\lambda}(E):=\textup{inf}\,\{\|u\|_{E}:u\in H_{\lambda},\, \|u\|_{S^{2n-1}}=1\},~\text{where}~\|u\|_{E}:=\sup\limits_{z\in E}|u(z)|.
	\]
\end{definition}
If $(X,\lambda)$ is the complex Euler vector field, then $\Psi_{E,\lambda}$ and $\rho_{\lambda}$ reduce to the extremal function and the projective capacity introduced in \cite{Siciak82}, respectively. 

This section is organized as follows. In Subsection \ref{subsection 4.1}, we formulate methods (Theorem \ref{quasihomstar}, Theorem \ref{mainapproximation}) for approximating a function in $H_{\lambda}$ by quasi-homogeneous polynomials of type $\lambda$. In Subsection \ref{subsection4.2}, we study the basic properties of $\lambda$-pluripolar sets. Then the results in the two subsections will be used to develop the theory of the $\lambda$-projective capacity and the related extremal function in Subsection \ref{subsection 4.3}. The whole theory culminates in the characterization of $\lambda$-pluripolar sets in terms of $\rho_{\lambda}, \Psi_{E,\lambda}$ (Theorem \ref{plurigreenandextgeneral}, Theorem \ref{characterizationofHppsets}).

Most of the arguments in this section follow the methods of \cite{Siciak82}. But we try to give the proofs in detail as \cite{Siciak82} seems not to be easily accessible.
\subsection{Plurisubharmonic functions on $\mathbb{C}^n$ generated by quasi-homogeneous polynomials}\label{subsection 4.1}
The following lemma of Hartogs will be important throughout. 
\begin{lemma}[Hartogs \cite{Hartogs1906}]\label{Hartogslemma}
	Let $\{u_m\}$ be a sequence of subharmonic functions on an open set $\Omega\subset \mathbb{C}^n$ and $C\in \mathbb{R}$ a constant such that
	\begin{enumerate}
	\setlength\itemsep{0.1em}
	\item $\{u_m\}$ is locally uniformly bounded from above on $\Omega$, and
	\item $\limsup\limits_{m\to \infty}u_m(z)\leq C$ for any $z\in \Omega$.
	\end{enumerate}
	 If $K$ is a compact subset of $\Omega$ and $\epsilon$ is a positive number, then there exists a positive integer $N=N(K,\epsilon)$ such that $u_m(z)\leq C+\epsilon$ whenever $m\geq N$ and $z\in K$.
\end{lemma}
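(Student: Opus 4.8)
The plan is to reduce the assertion on the compact set $K$ to a local estimate near an arbitrary point of $\Omega$ and then invoke compactness. Precisely, I would first establish the following: for every $z_0\in\Omega$ and every $\epsilon>0$ there are a ball $B^n(z_0;r)\subset\Omega$ and an integer $N=N(z_0,\epsilon)$ with $u_m\le C+\epsilon$ on $B^n(z_0;r)$ for all $m\ge N$. Granting this, I cover $K$ by finitely many such balls $B^n(z_0^{(1)};r_1),\dots,B^n(z_0^{(\ell)};r_\ell)$ and put $N:=\max_{1\le i\le\ell}N(z_0^{(i)},\epsilon)$, which gives the lemma. Throughout I regard the $u_m$ as subharmonic on an open subset of $\RR^{2n}$ (assuming harmlessly that no $u_m$ is identically $-\infty$ on a component of $\Omega$), so that the sub-mean value inequality $u_m(z)\le \frac{1}{\mathrm{vol}\,B^n(z;\rho)}\int_{B^n(z;\rho)}u_m\,dV$ is available for every $B^n(z;\rho)$ with $\overline{B^n(z;\rho)}\subset\Omega$.

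For the local estimate, fix $z_0$, choose $R>0$ with $\overline{B^n(z_0;R)}\subset\Omega$, and by hypothesis (1) fix a constant $M\ge C$ with $u_m\le M$ on $\overline{B^n(z_0;R)}$ for all $m$. The key point is that for $0<r<R/2$ and any $z\in B^n(z_0;r)$ one has $B^n(z_0;R-2r)\subset B^n(z;R-r)\subset B^n(z_0;R)$; so, averaging $u_m$ over $B^n(z;R-r)$, splitting the integral into its part over the \emph{fixed} ball $B^n(z_0;R-2r)$ and its part over the thin annulus, and bounding $u_m$ by $M$ on the annulus, the sub-mean value inequality yields
\[
u_m(z)\ \le\ \theta\,A_m+(1-\theta)M\qquad\text{for every }z\in B^n(z_0;r),
\]
where $A_m$ denotes the mean of $u_m$ over the fixed ball $B^n(z_0;R-2r)$ and $\theta:=\big(\tfrac{R-2r}{R-r}\big)^{2n}\in(0,1)$. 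Since the $M-u_m$ are nonnegative on $B^n(z_0;R-2r)$ and $\limsup_m u_m\le C$ there, Fatou's lemma gives $\limsup_{m\to\infty}A_m\le C$. Taking $\sup_{z\in B^n(z_0;r)}$ and then $\limsup_{m\to\infty}$ in the displayed inequality, I obtain $\limsup_{m\to\infty}\sup_{B^n(z_0;r)}u_m\le\theta C+(1-\theta)M=C+(1-\theta)(M-C)$; choosing $r$ small enough that $(1-\theta)(M-C)<\epsilon$ then produces the required integer $N(z_0,\epsilon)$.

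The only delicate point I anticipate is making the upper estimate survive the passage to the limit in $m$: this is precisely why one should average over a ball nearly as large as the fixed reference ball, so that the annular error $(1-\theta)(M-C)$ becomes negligible as $r\to0$, and it is also why the conclusion must read $C+\epsilon$ rather than $C$ — the sequence $u_m\equiv C+\tfrac1m$ satisfies both hypotheses yet has $u_m>C$ everywhere, so the $\epsilon$ is genuinely necessary. Everything else — the three ball inclusions, translation invariance of volume, the Fatou step, and the finite subcover — is routine.
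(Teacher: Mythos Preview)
Your argument is correct: the sub-mean value inequality combined with the ball inclusions $B^n(z_0;R-2r)\subset B^n(z;R-r)\subset B^n(z_0;R)$, the Fatou step applied to $M-u_m\ge 0$, and the final compactness reduction are exactly the standard ingredients, and you have handled the one genuine subtlety (making the annular error $(1-\theta)(M-C)$ small by sending $r\to 0$) cleanly. The paper itself does not prove this lemma at all --- it simply cites Narasimhan's textbook \cite{Nara95} --- so there is no in-paper argument to compare against; your proof is essentially the classical one found in that reference and in H\"ormander, and nothing is missing.
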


For the proof of the lemma, see \cite{Nara95}. Given a vector field $(X,\lambda)$ on $\mathbb{C}^n$, we denote by $\{\rho_j\}$ the increasing sequence of all possible values in (\ref{defofrho_j}) with $m=0$.
\begin{theorem}\label{quasihomexp}
Let $\Omega\subset \mathbb{C}^n$ be a $\lambda$-balanced domain containing the origin. If $f:\Omega\to \mathbb{C}$ is holomorphic, then there exists a sequence $\{q_m\}\subset \mathcal{H}_{\lambda}$ with $\textup{bideg}\,q_m=(\rho_m,0)$ such that $f=\sum_{m=0}^{\infty}q_m$ on $\Omega$. 
\end{theorem}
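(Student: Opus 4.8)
The plan is to take the $q_m$ to be the weighted‑homogeneous components of the Taylor expansion of $f$ at the origin and then to prove that the resulting series of polynomials converges locally uniformly on all of $\Omega$; an application of the identity theorem will finish the argument. In detail: $f$ is holomorphic near $0$, so it has a Taylor series $f=\sum_{k}a_{k}z^{k}$ converging absolutely on some $B^{n}(0;r_{0})\subset\Omega$, and this is its formal Taylor series, which is of holomorphic type. I would put $q_{m}(z):=\sum_{(\lambda,k)=\rho_{m}}a_{k}z^{k}$; since $\lambda_{j}>0$ for every $j$, only finitely many $k$ satisfy $(\lambda,k)=\rho_{m}$, so each $q_{m}$ is a polynomial, and the identity $q_{m}(\Phi^{X}(z,t))=\sum_{(\lambda,k)=\rho_{m}}a_{k}z^{k}e^{-(\lambda,k)t}=e^{-\rho_{m}t}q_{m}(z)$ shows $q_{m}\in\mathcal{H}_{\lambda}$ with $\textup{bideg}\,q_{m}=(\rho_{m},0)$. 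Regrouping the absolutely convergent Taylor series gives $f=\sum_{m}q_{m}$ on $B^{n}(0;r_{0})$. Consequently, once I know that $\sum_{m}q_{m}$ converges locally uniformly on $\Omega$, its sum is holomorphic on the connected open set $\Omega$ and coincides with $f$ near the origin, so $f=\sum_{m}q_{m}$ throughout $\Omega$. Thus the whole problem reduces to establishing the local uniform convergence of $\sum_{m}q_{m}$ on $\Omega$.

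I would first do this pointwise. Fix $z\in\Omega$ and set $h_{z}(t):=f\circ\Phi^{X}(z,t)$. The crucial geometric fact is that the orbit of $z$ extends slightly past the imaginary axis while staying in a compact part of $\Omega$. Because $\Omega$ is open and $t\mapsto\Phi^{X}(z,t)$ is continuous with $\Phi^{X}(z,0)=z$, one finds $\varepsilon>0$ with $\Phi^{X}(z,-\varepsilon)\in\Omega$, and $\lambda$‑balancedness then gives $\{\Phi^{X}(z,t):\textup{Re}\,t>-\varepsilon\}\subset\Omega$; taking $t=-is$ in particular shows that a $\lambda$‑balanced domain containing the origin is invariant under all the rotations $R_{s}\colon z\mapsto(z_{j}e^{-i\lambda_{j}s})_{j}$. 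I would upgrade this to invariance under the closure $T_{\lambda}$ of the one‑parameter group $\{R_{s}\}$: each orbit $\{R_{s}w\}$ is dense in its closure $S_{w}$, a sub‑torus on which the flow $\{R_{s}\}$ is minimal, and since $\Omega\cap S_{w}$ is a nonempty open $\{R_{s}\}$‑invariant subset of $S_{w}$ it must equal $S_{w}$, so $\Omega$ contains the full $T_{\lambda}$‑orbit $S_{w}$ of every $w\in\Omega$. Hence $K_{z}:=\overline{\{\Phi^{X}(z,t):\textup{Re}\,t\ge0\}}$ is compact — it is bounded since $|\Phi^{X}(z,t)_{j}|\le|z_{j}|$ — and contained in $\Omega$, because each of its limit points is either $0$ or of the form $g\cdot\Phi^{X}(z,a)$ with $g\in T_{\lambda}$ and $a\ge0$. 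Choosing $\eta>0$ with the $\eta$‑neighbourhood of $K_{z}$ inside $\Omega$ and using $\|\Phi^{X}(z,-\delta+is)-\Phi^{X}(z,is)\|\le(\max_{j}e^{\lambda_{j}\delta}-1)\|z\|\to0$ uniformly in $s$ as $\delta\to0^{+}$, I obtain $\delta_{z}>0$ and a compact set $\widehat{K}_{z}\subset\Omega$ with $\{\Phi^{X}(z,t):\textup{Re}\,t>-\delta_{z}\}\subset\widehat{K}_{z}$. I expect this $\lambda$‑circularity statement — that a $\lambda$‑balanced domain containing the origin is automatically $T_{\lambda}$‑invariant — to be the main obstacle; once it is available the rest is routine.

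With this in hand, $h_{z}$ is holomorphic on $\{\textup{Re}\,t>-\delta_{z}\}$ and bounded there by $M_{z}:=\sup_{\widehat{K}_{z}}|f|$. By Proposition \ref{asympTaylor}, the formal Taylor series of $f$ being of holomorphic type, $h_{z}$ has on $\mathbb{H}$ the asymptotic expansion $\sum_{m}q_{m}(z)e^{-\rho_{m}t}$; therefore $t\mapsto h_{z}(t-\delta_{z}/2)$ is holomorphic and bounded by $M_{z}$ on $\mathbb{H}$ with asymptotic expansion $\sum_{m}\bigl(q_{m}(z)e^{\rho_{m}\delta_{z}/2}\bigr)e^{-\rho_{m}t}$, and Proposition \ref{asympholo} gives $|q_{m}(z)|\le M_{z}e^{-\rho_{m}\delta_{z}/2}$ for all $m$. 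Since $\limsup_{m}\bigl(e^{-\rho_{m}\delta_{z}/2}\bigr)^{1/\rho_{m}}=e^{-\delta_{z}/2}<1$, Lemma \ref{convergence} yields $\sum_{m}e^{-\rho_{m}\delta_{z}/2}<\infty$, hence $\sum_{m}|q_{m}(z)|<\infty$. To obtain local uniform convergence I would repeat the argument over a compact set $L\subset\Omega$ in place of the single point $z$: $\overline{\{\Phi^{X}(w,t):w\in L,\ \textup{Re}\,t\ge0\}}$ is again compact in $\Omega$ by $T_{\lambda}$‑invariance, and the estimate $\|\Phi^{X}(w,-\delta+is)-\Phi^{X}(w,is)\|\le(\max_{j}e^{\lambda_{j}\delta}-1)\sup_{w\in L}\|w\|$ is uniform in $w\in L$ and $s\in\mathbb{R}$, so one gets $\delta_{L}>0$ and $M_{L}<\infty$ with $|q_{m}(z)|\le M_{L}e^{-\rho_{m}\delta_{L}/2}$ for every $z\in L$ and every $m$; then $\sum_{m}\sup_{L}|q_{m}|\le M_{L}\sum_{m}e^{-\rho_{m}\delta_{L}/2}<\infty$ by Lemma \ref{convergence}, which is precisely the local uniform convergence of $\sum_{m}q_{m}$ on $\Omega$ needed to conclude.
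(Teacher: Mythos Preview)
Your argument is correct and reaches the same destination as the paper---the exponential estimate $\sup_{L}|q_m|\le M_L\,e^{-\rho_m\delta_L/2}$ on each compact $L\subset\Omega$, followed by Lemma~\ref{convergence}---but by a genuinely different route. The paper never proves the $T_{\lambda}$-invariance of $\Omega$; instead it first bounds $\|q_m\|_{\bar B_r}\le\|f\|_{\bar B_r}$ via Proposition~\ref{asympholo} on a single small ball, then invokes the Bernstein--Walsh inequality (\ref{classical BW inequality}) to show the sequence $u_m=|q_m|^{1/\rho_m}$ is locally uniformly bounded on all of $\mathbb{C}^n$, establishes $\limsup_m u_m\le 1$ pointwise on $\Omega$, and applies Hartogs' lemma (Lemma~\ref{Hartogslemma}) on a slightly ``inflated'' compact $K_{t_0}=\Phi^X(K,-2t_0)\subset\Omega$ to obtain the decay. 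Your approach trades these two potential-theoretic tools for the dynamical fact that a $\lambda$-balanced domain containing the origin is automatically invariant under the closure $T_\lambda$ of the rotation group---proved via minimality of the flow on each orbit closure---after which a single application of Proposition~\ref{asympholo} to the shifted map $t\mapsto h_z(t-\delta_L/2)$ gives the estimate directly. Your proof is thus more elementary and self-contained, while the paper's argument dovetails with the pluripotential machinery (Bernstein--Walsh, Hartogs' lemma) that it develops and reuses throughout Section~\ref{Sect properties of ext ftns}; note also that the paper's step ``Letting $t\to 0$'' tacitly relies on convergence of the series for all $t\in\mathbb{H}$, a point your orbit-closure argument handles cleanly.
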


\begin{proof}
Let $B_r:=B^n(0;r)\subset \Omega$ be an open ball such that $\bar{B}_r\subset \Omega$. Then one can choose a sequence $\{q_m\}\subset \mathcal{H}_{\lambda}$ with $\textup{bideg}\,q_m=(\rho_m,0)$ satisfying   $ f=\sum_{m=0}^{\infty}q_m~\text{on}~\bar{B}_r.$  Since $\Omega$ is $\lambda$-balanced, the map $t\in \bar{\mathbb{H}}\to f(\Phi^X(z,t))$ is a well-defined bounded map for any $z\in \bar{B}_r$. So by Proposition \ref{asympholo}, we have $\|q_m\|_{\bar{B}_r}\leq \|f\|_{\bar{B}_r}$ for each $m\geq 0$. Note that 
\begin{equation}\label{degreeandrho_k}
\textup{deg}\,q_m\cdot \textup{min}(\lambda)\leq \rho_m \leq \textup{deg}\,q_m\cdot \textup{max}(\lambda),
\end{equation}
where $\textup{max}(\lambda):=\textup{max}\{\lambda_1,\dots,\lambda_n\}$ and $\textup{min}(\lambda):=\textup{min}\{\lambda_1,\dots,\lambda_n\}$. Then recall that the following $\textit{Bernstein-Walsh inequality}$ holds for any $q\in \mathbb{C}[z_1,\dots,z_n]$:
\begin{equation}{\label{classical BW inequality}}
  |q(z)|\leq \|q\|_{\bar{B}_r}\cdot \bigg\{\textup{max}\,\bigg(1,\frac{\|z\|}{r}\bigg)\bigg\}^{\textup{deg}\,q}~\text{for each}~z\in \mathbb{C}^n.
\end{equation}
So we have
\begin{align*}
u_m(z):=|q_m(z)|^{\frac{1}{\rho_m}} &\leq \|f\|^{\frac{1}{\rho_m}}_{\bar{B}_r}\cdot \bigg\{\textup{max}\,\bigg(1,\frac{\|z\|}{r}\bigg)\bigg\}^{\frac{\textup{deg}\,q_m}{\rho_m}}\\
&\leq \|f\|^{\frac{1}{\rho_m}}_{\bar{B}_r}\cdot\bigg\{\textup{max}\,\bigg(1,\frac{\|z\|}{r}\bigg)\bigg\}^{\frac{1}{\textup{min}(\lambda)}} 
\end{align*}
for each $m\geq 0$ and $z\in \mathbb{C}^n$. Therefore, the sequence $\{u_m\}\subset \textup{PSH}(\mathbb{C}^n)$ is locally uniformly bounded from above. Choose $z\in \Omega$ and $b=b(z)>0$ such that $\Phi^X(z,t)\in B$ for each $t\in \mathbb{H}$ with $\textup{Re}\,t>b$. Then
\begin{equation}\label{asymseries}
	f(\Phi^X(z,t))=\sum_{m=0}^{\infty}q_m(z)e^{-\rho_m t}
\end{equation}
if  $t\in \mathbb{H}$ and $\textup{Re}\,t>b$. Note that the series in (\ref{asymseries}) converges for any $t\in \mathbb{H}$. So it follows from Lemma \ref{convergence} that
\begin{equation*}
\limsup\limits_{m\to \infty}u_m(z)\cdot e^{-\textup{Re}\,t}\leq 1~\text{if}~z\in \Omega, t\in \mathbb{H}.
\end{equation*}
Letting $t \to 0$, we obtain $\limsup_{m\to \infty}u_m(z)\leq 1$ for each $z\in \Omega$. Let $K\subset \Omega$ be a compact set. As $\Omega$ is $\lambda$-balanced, there exists a number $t_0>0$ such that the set 
\[
K_{t_0}:=\{\Phi^X(z,-2t_0):z\in K\}\subset \Omega
\]
is relatively compact in $\Omega$. Then by Lemma \ref{Hartogslemma}, there exists a number $N_0>0$ such that $u_m(z)\leq e^{t_0}$ whenever $m\geq N_0,\, z\in K_{t_0}.$
So if  $m\geq N_0$ and $z\in K$, then
\begin{align*}
|q_m(z)|=e^{-2\rho_m t_0}\cdot |q_m(\Phi^X(z,-2t_0))|\leq e^{-\rho_m t_0}.
\end{align*}
Therefore, $S:=\sum_{m=0}^{\infty}q_m$ converges uniformly on $K$ by Lemma \ref{convergence} and it defines a holomorphic function on $\Omega$. Since $f\equiv S$ on $B$, it follows from the principle of analytic continuation that $f\equiv S$ on $\Omega$.
\end{proof}
Let $u:\Omega\to [-\infty,\infty)$ be a function defined on an open set $\Omega\subset \mathbb{C}^n$. The $\textit{upper-semicontinuous regularization}~u^{\ast}:\Omega\to [-\infty,\infty)$  of $u$ is defined to be
\begin{equation}\label{uscregular}
u^{\ast}(z):=\limsup_{\Omega \ni w\to z}u(w)~\forall z\in \Omega.
\end{equation}
\begin{theorem}\label{quasihomstar}
Let $u:\mathbb{C}^n\to [0,\infty)$ be a given function. Then $u\in H_{\lambda}$ if, and only if, there exists a sequence $\{q_m\}\subset \mathcal{H}_{\lambda}$ with $\textup{bideg}\,q_m=(\rho_m,0)$ such that 
\begin{equation}\label{quasihom}
u=\Big(\limsup\limits_{m\to \infty}|q_m|^{\frac{1}{\rho_m}}\Big)^{\ast}~\text{on}~\mathbb{C}^n.
\end{equation}
\end{theorem}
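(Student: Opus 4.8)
The plan is to establish both implications of the characterization \eqref{quasihom} separately, leaning on Theorem \ref{quasihomexp} and on the approximation machinery already developed.

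For the ``if'' direction, suppose $u = \big(\limsup_{m\to\infty}|q_m|^{1/\rho_m}\big)^{\ast}$ for some sequence $\{q_m\}\subset\mathcal{H}_\lambda$ with $\textup{bideg}\,q_m=(\rho_m,0)$. First I would observe that each $|q_m|^{1/\rho_m}$ is nonnegative and plurisubharmonic on $\mathbb{C}^n$ (the modulus of a holomorphic function raised to a positive power), so $v:=\limsup_m |q_m|^{1/\rho_m}$ is nonnegative; moreover $v^{\ast}=u$ is plurisubharmonic \emph{provided} $v$ is locally bounded above, which one gets from the Bernstein--Walsh inequality \eqref{classical BW inequality} exactly as in the proof of Theorem \ref{quasihomexp} once one knows $\limsup_m |q_m|^{1/\rho_m}<\infty$ somewhere, say on $S^{2n-1}$ --- this boundedness hypothesis is implicitly what makes the formula meaningful, and I would state it or derive it. The key computation is the $\lambda$-homogeneity: since $q_m\circ\Phi^X(z,t)=e^{-\rho_m t}q_m(z)$ by definition of $\mathcal{H}_\lambda$ with bidegree $(\rho_m,0)$, we get $|q_m(\Phi^X(z,t))|^{1/\rho_m}=e^{-\textup{Re}\,t}\,|q_m(z)|^{1/\rho_m}$, and taking $\limsup$ over $m$ and then the u.s.c. regularization (which commutes with the fixed biholomorphism $\Phi^X(\cdot,t)$) yields $u(\Phi^X(z,t))=e^{-\textup{Re}\,t}u(z)$. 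Finally $u$ is nonconstant because it is a limsup of nonconstant functions satisfying this scaling relation (if $u\equiv c$ then $c=e^{-\textup{Re}\,t}c$ for all $t$, forcing $c=0$, but $u\equiv 0$ would force all $q_m\equiv 0$). Hence $u\in H_\lambda$.

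For the ``only if'' direction, let $u\in H_\lambda$. The natural idea is to approximate $u$ from below by logarithms of sums of moduli of quasi-homogeneous polynomials --- this is the standard Siciak-type argument. I would consider, for each $r$, the domain $\Omega=\{u<r\}$ (or a $\lambda$-balanced exhaustion thereof): the scaling relation $u(\Phi^X(z,t))=e^{-\textup{Re}\,t}u(z)$ shows $\{u<r\}$ is $\lambda$-balanced and contains the origin. For a holomorphic function on such a domain --- in particular for functions in the family defining $u$ via duality with $H_\lambda$ --- Theorem \ref{quasihomexp} provides a quasi-homogeneous polynomial expansion. Concretely I expect the cleanest route is: take a sequence of functions $f_j$ holomorphic and bounded on $\{u<1\}$ (or functions realizing $u$ as a supremum), expand each via Theorem \ref{quasihomexp} as $f_j=\sum_m q_{j,m}$, use the Bernstein--Walsh estimates to control $|q_{j,m}|^{1/\rho_m}$ by something comparable to $u$, and then diagonalize. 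Alternatively, and perhaps more directly, one shows that for $u\in H_\lambda$ the function $\widehat u := \big(\limsup_m \sup\{|q|^{1/\rho_m}: q\in\mathcal{H}_\lambda,\ \textup{bideg}\,q=(\rho_m,0),\ |q|\le u^{\rho_m}\text{ on }S^{2n-1}\}\big)^{\ast}$ equals $u$, one inequality being the ``if'' direction already proved and the reverse inequality following from approximating $u$ on $\{u<r\}$ by a holomorphic $f$ with $|f|$ close to $u$ (possible since $\{u<r\}$ is a $\lambda$-balanced pseudoconvex domain and $-\log u$-type functions can be approximated), expanding $f=\sum q_m$ by Theorem \ref{quasihomexp}, and reading off one dominant term of comparable modulus. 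Then one extracts a single sequence $\{q_m\}$ with $\textup{bideg}\,q_m=(\rho_m,0)$ whose limsup of $\rho_m$-th roots regularizes to $u$.

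The main obstacle is the ``only if'' direction: producing a \emph{single} sequence $\{q_m\}$, one polynomial per degree $\rho_m$, rather than a doubly-indexed family, so that the limsup genuinely recovers $u$ after regularization. This requires a careful diagonal/selection argument combining (i) the Bernstein--Walsh bound \eqref{classical BW inequality} together with \eqref{degreeandrho_k} to guarantee the selected $|q_m|^{1/\rho_m}$ stay in a locally bounded family, (ii) Hartogs' lemma (Lemma \ref{Hartogslemma}) to pass from pointwise limsup control to the u.s.c.\ regularization, and (iii) the $\lambda$-balanced expansion of Theorem \ref{quasihomexp} applied on sublevel sets $\{u<r\}$ for $r\uparrow\infty$ to ensure the approximation holds on all of $\mathbb{C}^n$ and not merely near the origin. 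I would handle the degree-matching by noting that if two polynomials of the \emph{same} bidegree $(\rho_m,0)$ are available we may take the one of larger modulus at a suitable comparison point, and that the scaling relation forces the estimates at one radius to propagate to all radii, exactly as in the final paragraph of the proof of Theorem \ref{quasihomexp}.
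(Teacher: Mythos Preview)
Your ``if'' direction is essentially the paper's approach, though you gloss over the local boundedness of $\{|q_m|^{1/\rho_m}\}$. The paper obtains it by Baire category: since $u$ is finite-valued, some $\{u\le M\}$ has interior, and a second Baire-type step (using continuity of each $|q_m|^{1/\rho_m}$) then gives a ball on which $\sup_m |q_m|^{1/\rho_m}<\infty$; from there \eqref{classical BW inequality} propagates the bound. Your remark that you would ``state it or derive it'' is the right instinct but the mechanism is Baire, not an a~priori hypothesis. (A minor slip: $u\equiv 0$ does \emph{not} force all $q_m\equiv 0$; e.g.\ $q_m=m^{-\rho_m}z_1^{m}$ when $\lambda_1=1$. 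This edge case is harmless for the theorem but your argument for nonconstancy is incorrect.)

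The substantive gap is in your ``only if'' direction. You propose approximating $u$ by a family of holomorphic functions on sublevel sets, expanding each, and then running a diagonal selection to produce a single sequence $\{q_m\}$. That selection is never carried out, and it is genuinely delicate: nothing you have written explains why the limsup of the selected $|q_m|^{1/\rho_m}$ should reproduce $u$ rather than something strictly smaller. The paper avoids this entirely with a much sharper idea. Since $u\in H_\lambda$, the set $\Omega=\{u<1\}$ is a $\lambda$-balanced domain of holomorphy, so there is a \emph{single} holomorphic $f$ on $\Omega$ that does not extend across $\partial\Omega$. Expand $f=\sum_m q_m$ by Theorem~\ref{quasihomexp} and set $v=(\limsup_m|q_m|^{1/\rho_m})^{\ast}\in H_\lambda$. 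By Lemma~\ref{convergence} and the nonextendability of $f$, one has $\{v<1\}=\Omega$, hence $u=v=1$ on $\partial\Omega$. Now the decisive observation you are missing: two functions in $H_\lambda$ that agree at a single point $z_0\ne 0$ agree on the entire flow orbit $\{\Phi^X(z_0,t):t\in\mathbb{C}\}$ by the scaling relation, and since these orbits foliate $\mathbb{C}^n\setminus\{0\}$, equality on $\partial\Omega$ forces $u\equiv v$ globally. No diagonal argument, no family of $f_j$, no passage $r\uparrow\infty$ is needed.
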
 
\begin{proof}
Suppose that (\ref{quasihom}) holds and define a set $A_m:=\{z\in \mathbb{C}^n: u(z)\leq m\}$ for each $m\geq 1$. Then note that $\mathbb{C}^n=\bigcup_{m=1}^{\infty}A_m.$ By the Baire category theorem, $A_M$ has a nonempty interior for some $M\geq 1$. So there exist an open ball $B^n(a;r)$ and $N>0$ such that  $u_m(z):=|q_m(z)|^{\frac{1}{\rho_m}}\leq N$ for each $m\geq 0,~z\in B^n(a;r).$ Then it follows from (\ref{classical BW inequality}) that the sequence $\{u_m\}$ is locally uniformly bounded. Applying Fatou's lemma to the submean inequality for $u_m$, we conclude that $u$ is plurisubharmonic and $u\in H_{\lambda}$.

Conversely, suppose that $u\in H_{\lambda}$ and define a $\lambda$-balanced domain $\Omega:=\{z\in \mathbb{C}^n:u(z)<1\}$ containing the origin. Then it is well-known that $\Omega$ is a domain of holomorphy, i.e., there exists a holomorphic function $f:\Omega\to \mathbb{C}$ that cannot be extended holomorphically across the boundary $\partial \Omega$ of $\Omega$. By Theorem $\ref{quasihomexp}$, there is a sequence $\{q_m\}\subset \mathcal{H}_{\lambda}$ with $\textup{bideg}\,q_m=(\rho_m,0)$ such that $f=\sum_{m=0}^{\infty}q_m~\text{on}~\Omega.$ Define a function $v\in H_{\lambda}$ as
\[
v(z):=\Big(\limsup_{m\to \infty}|q_m(z)|^{\frac{1}{\rho_m}}\Big)^{\ast}.
\]
Then by Lemma \ref{convergence} and the choice of $f$, we have $v(z)<1$ if, and only if, $z\in \Omega$. So $\Omega=\{z\in \mathbb{C}^n:v(z)<1\}$. Note also that $u(z_0)=v(z_0)=1$ whenever $z_0\in \partial \Omega$. Therefore, $u\equiv v$ on $\{\Phi^{X}(z_0,t):t\in \mathbb{C}\}$. Since the set of integral curves of $X$ forms a foliation of $\mathbb{C}^n-\{0\}$, we have $u\equiv v$ on $\mathbb{C}^n$ as desired.
\end{proof}
\begin{remark}\label{remarklogofHlambdaftn}
\normalfont
Equation (\ref{quasihom}) implies that $\textup{log}\,u\in \textup{PSH}(\mathbb{C}^n)$ for any $u\in H_{\lambda}$. So if $u,v\in H_{\lambda}$, then
\[	u^{\alpha}v^{\beta}=\textup{exp}\,(\alpha\,\textup{log}\,u+\beta\,\textup{log}\,v)\in\textup{PSH}(\mathbb{C}^n)
\]
whenever $\alpha,\beta$ are nonnegative numbers. Note also that $u^{\alpha}v^{\beta}\in H_{\lambda}$ if $\alpha+\beta=1$.
\end{remark}

It turns out that the approximation (\ref{quasihom}) is of limited use as the equation involves the upper-semicontinuous regularization. To develop a better approximation theorem for functions in $H_{\lambda}$, we first introduce the following

\begin{definition}\label{lambdacircular}
\normalfont
Let $(X,\lambda)$ be a vector field on $\mathbb{C}^n$ and $K$ a compact subset of a $\lambda$-balanced domain $\Omega\subset\mathbb{C}^n$. The $\textit{polynomially}$ $\lambda$-$\textit{convex hull}$ of $K$ in $\Omega$ is
\[
\hat{K}_\lambda:=\{z\in \Omega: |q_m(z)|\leq \|q_m\|_{K}~\text{for any}~q_m\in \mathcal{H}_{\lambda},\,\textup{bideg}\,q_m=(\rho_m,0)\}.
\]
We say that a set $E\subset \mathbb{C}^n$ is $\lambda$-$\textit{circular}$ if $\Phi^X(z,t)\in E$ whenever $z\in E$ and $t\in \mathbb{C},~\textup{Re}\,t=0.$
\end{definition}
Recall that the $\textit{polynomially convex hull}~\hat{K}$ and the $\textit{holomorphically convex hull}$ $\hat{K}_h$ of $K$ in $\Omega$ are defined as
\begin{gather}
\begin{align*}
	\hat{K}&:=\{z\in \Omega: |q(z)|\leq \|q\|_{K}~\text{for any}~q\in \mathbb{C}[z_1,\dots,z_n] \},~\text{and}\\
	\hat{K}_h&:=\{z\in \Omega: |f(z)|\leq \|f\|_{K}~\text{for any holomorphic function}~f:\Omega\to \mathbb{C}\},
\end{align*}
\end{gather}
respectively. Then $\hat{K}_h\subset \hat{K}\subset \hat{K}_\lambda$ for any compact set $K\subset \Omega$.

\begin{proposition}\label{polyhull}
If $K$ is a $\lambda$-circular compact subset of a $\lambda$-balanced domain $\Omega\subset \mathbb{C}^n$, then we have
\[
\hat{K}_h=\hat{K}=\hat{K}_\lambda.
\]
\end{proposition}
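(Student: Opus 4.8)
The plan is to prove the two inclusions $\hat{K}_\lambda \subset \hat{K}_h$ and $\hat{K} \subset \hat{K}_\lambda$ separately (the middle inclusion $\hat{K}_h \subset \hat{K}$ being trivial and already noted), so that the chain forces all three hulls to coincide. The inclusion $\hat{K} \subset \hat{K}_\lambda$ is immediate from the definitions since $\mathcal{H}_\lambda$-polynomials are genuine polynomials, so the real content is to show $\hat{K}_\lambda \subset \hat{K}_h$, i.e.\ that the weak condition ``$|q_m(z)| \le \|q_m\|_K$ for all quasi-homogeneous $q_m$ of bidegree $(\rho_m,0)$'' already forces $|f(z)| \le \|f\|_K$ for every holomorphic $f$ on $\Omega$.

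First I would reduce an arbitrary holomorphic $f$ on $\Omega$ to its quasi-homogeneous expansion: by Theorem~\ref{quasihomexp}, write $f = \sum_{m=0}^\infty q_m$ on $\Omega$ with $q_m \in \mathcal{H}_\lambda$, $\textup{bideg}\,q_m = (\rho_m,0)$. The naive estimate $|f(z)| \le \sum_m |q_m(z)| \le \sum_m \|q_m\|_K$ does not give $\|f\|_K$ on the right, so instead I would exploit the $\lambda$-circularity of $K$. For $z \in \hat K_\lambda$ fixed, each map $t \mapsto f(\Phi^X(z,t))$ is holomorphic on a half-plane $\{\textup{Re}\,t > b\}$ with the series representation $f(\Phi^X(z,t)) = \sum_m q_m(z) e^{-\rho_m t}$ (as in (\ref{asymseries})), and this is an asymptotic expansion in the sense of Section~\ref{Sect asymptotic}. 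The key point: because $K$ is $\lambda$-circular and compact, for a point $w \in K$ the orbit piece $\{\Phi^X(w,t) : \textup{Re}\,t = 0\}$ stays in $K$, and I can use $\lambda$-circularity to replace integration over $t \in i\RR$ by averaging. Concretely, for $z \in \hat K_\lambda$ and each $m$, the hull condition gives $|q_m(z)| \le \|q_m\|_K$; I want to package these into a single bound on $|f(z)|$.

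The cleanest route is via the extremal function $\Psi_{K,\lambda}$ and Theorem~\ref{quasihomstar}. Set $v := \Psi_{K,\lambda}$; since $K$ is compact in the $\lambda$-balanced domain $\Omega$ it is bounded, and from Theorem~\ref{quasihomstar} (applied to a suitable $u \in H_\lambda$) together with the Bernstein–Walsh-type estimate (\ref{classical BW inequality}) one shows $|q_m(z)| \le \|q_m\|_K \cdot \big(v(z)\big)^{\rho_m}$ for $z$ outside $K$ — wait, rather I would argue: define $u_m(z) := |q_m(z)|^{1/\rho_m}/\|q_m\|_K^{1/\rho_m}$; these lie in $H_\lambda$ after regularization, are $\le 1$ on $K$, hence $\le \Psi_{K,\lambda}$ on all of $\CC^n$, giving $|q_m(z)| \le \|q_m\|_K \cdot \Psi_{K,\lambda}(z)^{\rho_m}$. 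If $z \in \hat K_\lambda$ then $|q_m(z)| \le \|q_m\|_K$ for all $m$; combined with $\|q_m\|_K \le \|f\|_K$ (Proposition~\ref{asympholo} applied on the half-plane, using that $K \subset \Omega$ and $\Omega$ is $\lambda$-balanced so $t \mapsto f(\Phi^X(w,t))$ is bounded on $\bar{\mathbb H}$ for $w$ in a slightly larger $\lambda$-circular compact set), and with the uniform convergence of $\sum q_m$ on compacta of $\Omega$ established in Theorem~\ref{quasihomexp}, one gets that $\sum_m q_m(z)$ converges and that the limit function satisfies the holomorphic convexity bound. A short additional argument (replacing $K$ by its orbit-thickening and using $\lambda$-circularity to gain a geometric factor $e^{-\rho_m t_0}$ exactly as in the last paragraph of the proof of Theorem~\ref{quasihomexp}) upgrades the bound $\sum_m \|q_m\|_K$ to the sharp $\|f\|_K$, yielding $z \in \hat K_h$.

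I expect the main obstacle to be precisely this last upgrade — passing from the crude bound $|f(z)| \le \sum_m \|q_m\|_K$ to the sharp $|f(z)| \le \|f\|_K$. The resolution should again be the $\lambda$-circularity of $K$: averaging $f(\Phi^X(w, i\theta))$ over $\theta$ and using that the quasi-homogeneous components $q_m(z)e^{-\rho_m t}$ are the ``Fourier-like'' pieces of this average, one extracts each $\|q_m\|_K$ with the correct sign so that they recombine into $\|f\|_K$ rather than summing in absolute value; this is where the hypothesis that $K$ be $\lambda$-circular (not merely compact) is essential, and it is the analogue in this setting of the classical fact that for a circled compact set the polynomial hull equals the holomorphic hull.
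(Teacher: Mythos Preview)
Your setup is correct: expand $f=\sum_m q_m$ via Theorem~\ref{quasihomexp}, use Proposition~\ref{asympholo} on the bounded map $t\in\bar{\mathbb H}\mapsto f(\Phi^X(w,t))$ (well-defined because $K$ is $\lambda$-circular) to get $\|q_m\|_K\le\|f\|_K$, and then for $z\in\hat K_\lambda$ use $|q_m(z)|\le\|q_m\|_K$. This gives, for $\textup{Re}\,t>0$,
\[
|f(\Phi^X(z,t))|\le\sum_m|q_m(z)|\,e^{-\rho_m\textup{Re}\,t}\le\|f\|_K\cdot\sum_m e^{-\rho_m\textup{Re}\,t}.
\]
The gap is in the final upgrade. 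Your proposed ``Fourier-averaging'' over $\theta\in\mathbb R$ cannot work as stated: averaging $f(\Phi^X(w,i\theta))$ against $e^{i\rho_m\theta}$ only recovers $|q_m(w)|\le\|f\|_K$, which you already have; there is no sign mechanism that would recombine $\sum_m\|q_m\|_K$ back into $\|f\|_K$, since the $q_m$ are arbitrary polynomials with no common phase. The detour through $\Psi_{K,\lambda}$ is also problematic here, because the relevant approximation theory for $\Psi_{K,\lambda}$ (Proposition~\ref{approximation prop}, Theorem~\ref{mainapproximation}, Theorem~\ref{quasiextremal}) is built \emph{on top of} this proposition.

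The missing idea is the standard power trick: apply the displayed inequality to $f^k$ in place of $f$. Since the quasi-homogeneous expansion of $f^k$ again has coefficients bounded by $\|f^k\|_K=\|f\|_K^k$, you obtain
\[
|f(\Phi^X(z,t))|^k\le\|f\|_K^k\cdot\sum_m e^{-\rho_m\textup{Re}\,t},
\]
and taking $k$-th roots and letting $k\to\infty$ kills the factor $\big(\sum_m e^{-\rho_m\textup{Re}\,t}\big)^{1/k}$, yielding $|f(\Phi^X(z,t))|\le\|f\|_K$. Now let $t\to 0$. This is exactly what the paper does, and it is the step your proposal is circling around without landing on.
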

\begin{proof}
It suffices to show that $\hat{K}_{\lambda}\subset \hat{K}_{h}$. Choose a holomorphic function $f:\Omega\to \mathbb{C}$ and let $f=\sum q_m$ be the power series expansion of $f$ on $\Omega$ given by Theorem \ref{quasihomexp}. Choose $z\in K$. Since $\Omega$ is $\lambda$-balanced, the map $t\in \mathbb{H}\to f(\Phi^X(z,t))$ is well-defined for each $z\in K$. Furthermore, the map is also well-defined for any $t\in \partial{\mathbb{H}}$ as $K$ is $\lambda$-circular. Applying Proposition \ref{asympholo} to the bounded map
\[
t\in \bar{\mathbb{H}}\to f\circ \Phi^X(z,t)=\sum_{m=0}^{\infty}q_m(z)e^{-\rho_mt},
\]
we obtain 
\begin{equation}\label{leafwiseest}
|q_m(z)|\leq \|f\|_{\bar{L}_z}=\|f\|_{\partial \bar{L}_z}\leq \|f\|_{K}~\text{for each}~z\in  K,~m\geq 0
\end{equation}
so that $\|q_m\|_{K}\leq \|f\|_{K}$. Note that the equality in (\ref{leafwiseest}) follows from the maximum principle applied to $t\in \bar{\mathbb{H}}\to f\circ \Phi^X(z,t)$. Let $z\in \hat{K}_{\lambda}$. Then $|q_m(z)|\leq \|q_m\|_{K}$ for any $q_m\in \mathcal{H}_{\lambda}$ with $\textup{bideg}\,q_m=(\rho_m,0)$. So it follows from (\ref{leafwiseest}) and Lemma \ref{convergence} that
\begin{align}\label{hullest1}
|f(\Phi^X(z,t))|&= \bigg|\sum_{m=0}^{\infty}q_m(z)\cdot e^{-\rho_m\,t}\bigg|\leq \sum_{m=0}^{\infty}|q_m(z)|\cdot e^{-\rho_m \textup{Re}\,t}\\
&\leq \sum_{m=0}^{\infty} \|q_m\|_K\cdot e^{-\rho_m \textup{Re}\,t}\leq \|f\|_K\cdot\sum_{m=0}^{\infty} e^{-\rho_m \textup{Re}\,t}<+\infty  \nonumber
\end{align}
if $\textup{Re}\,t>0$. Fix a positive number $k\geq 1$ and replace $f$ in (\ref{hullest1}) with $f^k$. Then take the $k$th root of both sides of the inequality and let $k\to \infty$ to obtain $|f(\Phi^X(z,t))|\leq \|f\|_{K}$. Letting $t\to 0$, we obtain $|f(z)|\leq \|f\|_{K}$. Since $f$ is an arbitrary holomorphic function, $z\in \hat{K}_{h}$ so that $\hat{K}_{\lambda}\subset \hat{K}_{h}$ as desired.
\end{proof}

\begin{proposition}\label{approximation prop}
Let $u\in H_{\lambda}$ be a continuous function. If there exists a proper function $v:\mathbb{C}^n\to \mathbb{R}$ such that $u\geq v$ on $\mathbb{C}^n$, then
\begin{equation}\label{approximation 1}
u(z)=\sup\limits_{q_m\in \mathcal{H}_{\lambda}}\{|q_m(z)|^{\frac{1}{\rho_m}}: \textup{bideg}\,q_m=(\rho_m,0),\;|q_m|^{\frac{1}{\rho_m}}\leq u~\text{on}~\mathbb{C}^n\}
\end{equation}
for each $z\in \mathbb{C}^n$.
\end{proposition}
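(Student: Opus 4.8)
The plan is to prove that the right-hand side, which I call $w(z)$ and which is obviously $\le u(z)$ since every competitor is $\le u$, in fact equals $u$. First some reductions. The hypotheses force $u>0$ on $\mathbb{C}^n\setminus\{0\}$: if $u(z_1)=0$ for some $z_1\ne 0$, then $u$ vanishes along the whole orbit $\{\Phi^X(z_1,t):t\in\mathbb{C}\}$, which is unbounded, contradicting $u\ge v$ with $v$ proper; consequently $u$ is itself proper (its growth is bounded below along orbits by its minimum on $S^{2n-1}$), so $\Omega:=\{u<1\}$ is a bounded, $\lambda$-balanced and $\lambda$-circular, domain of holomorphy with $\bar\Omega=\{u\le 1\}$ and $\partial\Omega=\{u=1\}$. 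Since $u$ and every $|q_m|^{1/\rho_m}$ (for $\mathrm{bideg}\,q_m=(\rho_m,0)$) are both multiplied by $e^{-\mathrm{Re}\,t}$ under $\Phi^X(\cdot,t)$, the same holds for $w$, so $w(z)<u(z)$ at a point iff it holds along its entire leaf; it therefore suffices to show $w(z_0)\ge u(z_0)=1$ for an arbitrary $z_0\in\partial\Omega$. By the same homogeneity and the maximum principle, $|q_m|^{1/\rho_m}\le u$ on $\mathbb{C}^n$ is equivalent to $\|q_m\|_{\bar\Omega}\le 1$, so $w(z)=\sup\{|q_m(z)|^{1/\rho_m}:\mathrm{bideg}\,q_m=(\rho_m,0),\ \|q_m\|_{\bar\Omega}\le 1\}$.

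The engine of the proof is a rescaling trick. Let $F\in\mathcal{O}(\Omega)$, write $F=\sum_m q_m$ by Theorem \ref{quasihomexp}, and set $u_m:=|q_m|^{1/\rho_m}$; as in the proof of Theorem \ref{quasihomstar}, $\{u_m\}$ is locally uniformly bounded above on $\mathbb{C}^n$. If moreover $(\limsup_m u_m)^{\ast}=u$ — which holds for the particular $F$ produced in Theorem \ref{quasihomstar} — then for each $\eta>0$, Hartogs' lemma (Lemma \ref{Hartogslemma}) applied on the open set $\{u<1+\eta\}$ with the compact set $\bar\Omega$ and the constant $C=1+\eta$ gives $N$ with $\|q_m\|_{\bar\Omega}\le(1+2\eta)^{\rho_m}$ for $m\ge N$. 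Then $\tilde q_m:=(1+2\eta)^{-\rho_m}q_m$ has $\|\tilde q_m\|_{\bar\Omega}\le1$, so it competes in the supremum defining $w$, whence $w\ge|\tilde q_m|^{1/\rho_m}=(1+2\eta)^{-1}u_m$ on $\mathbb{C}^n$ for every $m\ge N$. Letting $m\to\infty$ and then $\eta\to0$, we get $w\ge\limsup_m u_m$ on $\mathbb{C}^n$, hence $w^{\ast}\ge(\limsup_m u_m)^{\ast}=u$; combined with $w\le u$ (and $u$ continuous) this gives $w^{\ast}=u$.

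It remains to upgrade $w^{\ast}=u$ to the pointwise identity $w=u$; since $w$ is only lower semicontinuous this is the substantive step. Fix $z_0\in\partial\Omega$. The idea is to choose the $F$ above not merely completely singular on $\Omega$, but singular along the leaf $L_{z_0}$: so that $t\mapsto F(\Phi^X(z_0,t))$, holomorphic on $\mathbb{H}$, satisfies $|F(\Phi^X(z_0,t))|\to\infty$ as $\mathrm{Re}\,t\to0^{+}$. Such an $F$ exists because $u$ is continuous, so $-\log(1-u)$ is a continuous plurisubharmonic exhaustion of $\Omega$ and $\Omega$ is hyperconvex; a plurisubharmonic barrier at $z_0$ then yields the desired $F\in\mathcal{O}(\Omega)$ (adding a completely singular function if necessary to retain singularity at the remaining boundary points). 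For this $F$, $F(\Phi^X(z_0,t))=\sum_m q_m(z_0)e^{-\rho_m t}$ for $\mathrm{Re}\,t>0$, and Lemma \ref{convergence} pins the abscissa of convergence of this generalized Dirichlet series to $\log\!\big(\limsup_m|q_m(z_0)|^{1/\rho_m}\big)$; were it negative the series would converge, locally uniformly, on a half-plane strictly larger than $\mathbb{H}$, forcing $t\mapsto F(\Phi^X(z_0,t))$ to extend holomorphically across $\mathrm{Re}\,t=0$ — contradicting the blow-up. Hence $\limsup_m|q_m(z_0)|^{1/\rho_m}\ge1$, and being also $\le u(z_0)=1$ it equals $1$. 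Moreover $(\limsup_m u_m)^{\ast}\in H_\lambda$ and, exactly as in Theorem \ref{quasihomstar}, it is again $u$, so the rescaling trick applies to this $F$; picking a subsequence along which $|q_m(z_0)|^{1/\rho_m}\to1$ gives $w(z_0)\ge(1+2\eta)^{-1}$ for every $\eta>0$, i.e. $w(z_0)\ge1=u(z_0)$, as required.

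I expect the main obstacle to be the construction in the last paragraph of a holomorphic function on $\Omega$ blowing up along the prescribed boundary leaf $L_{z_0}$ — equivalently, the assertion that for a suitably chosen sequence $\{q_m\}$ realizing $u$ via Theorem \ref{quasihomstar} the boundary value $u(z_0)=1$ is actually attained by $\limsup_m|q_m(z_0)|^{1/\rho_m}$, and not merely by its upper-semicontinuous regularization, at every $z_0\in\partial\Omega$. This is precisely where the continuity of $u$ (hence the hyperconvexity of $\Omega$) must be used in an essential way; without it one only recovers $w=u$ off a pluripolar union of leaves.
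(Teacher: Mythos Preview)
Your reduction to proving $w(z_0)\ge 1$ for $z_0\in\partial\Omega$, and your first step showing $w^{\ast}=u$ via Hartogs' lemma applied to the sequence from Theorem~\ref{quasihomstar}, are both fine. The gap is exactly where you flagged it: the construction of $F\in\mathcal{O}(\Omega)$ whose leafwise restriction $t\mapsto F(\Phi^X(z_0,t))$ fails to extend across $\textup{Re}\,t=0$. Hyperconvexity gives you a plurisubharmonic barrier, but ``a plurisubharmonic barrier at $z_0$ then yields the desired $F$'' is not a step --- turning a PSH barrier into a holomorphic function with prescribed unboundedness along a \emph{specific curve} approaching $z_0$ is not standard, and complete singularity of $F$ on $\partial\Omega$ does not imply that the one-variable function $t\mapsto F(\Phi^X(z_0,t))$ is non-extendible (an $F$ can be essentially singular at $z_0$ yet have its leafwise slice extend past $\textup{Re}\,t=0$). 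Your ``adding a completely singular function if necessary'' does not help either, since the sum could still be bounded along that one leaf.

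The paper avoids this entirely and takes a much shorter route: it never proves $w^{\ast}=u$ first. For each $t>0$ the sublevel set $K^t:=\{u\le e^{-t}\}$ is a $\lambda$-circular compact subset of $\Omega$ (here continuity of $u$ and properness of $v$ are used), so by Proposition~\ref{polyhull} one has $\widehat{(K^t)}_{\lambda}=\widehat{(K^t)}_h$, which is relatively compact in $\Omega$ by Cartan--Thullen. Thus for small $s\in(0,t)$ the point $\Phi^X(z_0,s)$ lies outside $\widehat{(K^t)}_{\lambda}$, and by definition there is $q_m\in\mathcal{H}_{\lambda}$ with $\|q_m\|_{K^t}=1<|q_m(\Phi^X(z_0,s))|$. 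Since $\partial\Omega$ maps into $K^t$ under $\Phi^X(\cdot,t)$, one gets $e^{-t}|q_m|^{1/\rho_m}\le u$ on $\mathbb{C}^n$, so $e^{-\rho_m t}q_m$ competes in $w$; evaluating at $\Phi^X(z_0,s)$ gives $e^{-t}<e^{-s}w(z_0)$, and letting $t\to 0$ yields $w(z_0)\ge 1$. This bypasses the upper-regularization issue completely: the separating polynomial is produced directly by the hull equality rather than extracted from a singular holomorphic function.
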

\begin{proof}
Denote by $\tilde{u}$ the function on the right-hand side of (\ref{approximation 1}). Then we immediately have $\tilde{u}\leq u$ on $\mathbb{C}^n$. To prove that $u\leq \tilde{u}$ on $\mathbb{C}^n$, it suffices to show that $\tilde{u}(a)\geq 1$ whenever $u(a)=1$ as the set of the integral curves of $(X,\lambda)$ forms a foliation of $\mathbb{C}^n-\{0\}$. Define a $\lambda$-balanced domain of holomorphy $\Omega:=\{z\in \mathbb{C}^n: u(z)<1\}$ containing the origin and a $\lambda$-circular set
\[
K^t:=\{z\in \mathbb{C}^n: u(z)\leq e^{-t}\}\subset\Omega
\]
for each $t>0$. Since $u$ is continuous and $v$ is proper, each $K^t$ is closed and bounded; that is, $K^t$ is compact in $\mathbb{C}^n$. The set $K^t$ is also relatively compact in $\Omega$ as $\textup{dist}(K^t,\partial \Omega)>0$. Then by Proposition \ref{polyhull} and the Cartan-Thullen theorem, $\hat{(K^t)}_{\lambda}=\hat{(K^t)}_{h}$ is a relatively compact subset of $\Omega$. Note that $a\in \partial \Omega$. So, for each $t\in (0,1)$, there exists a number $s\in (0,t)$ such that  $\Phi^X(a,s)\notin \hat{(K^t)}_{\lambda}$. This means that
\begin{equation}\label{polyest1}
1=\|q_m\|_{K^t}<|q_m(\Phi^X(a,s))| 
\end{equation}
for some $q_m\in H_{\lambda}.$ Note also that, if $z\in \partial \Omega$, then $u(z)=1$ and $\Phi^X(z,t)\in K_t.$ So the equality in (\ref{polyest1}) yields
\begin{equation*}\label{polyest2}
|q_m(\Phi^X(z,t))|^{\frac{1}{\rho_m}}=e^{-t}|q_m(z)|^{\frac{1}{\rho_m}}=1=u(z)~\text{if}~z\in \partial \Omega.
\end{equation*}
As $e^{-t}|q_m|^{\frac{1}{\rho_m}},\,u\in H_{\lambda}$ and the set of integral curves of $(X,\lambda)$ forms a foliation of $\mathbb{C}^n-\{0\}$, we have $e^{-t}|q_m(z)|^{\frac{1}{\rho_m}}=u(z)$ for any $z\in \mathbb{C}^n$. Then by the definition of $\tilde{u}$, $e^{-t}|q_m(z)|^{\frac{1}{\rho_m}}\leq \tilde{u}(z).$ Let $z=\Phi^X(a,s)$ in the inequality and use the inequality in (\ref{polyest1}) to  obtain $e^{-t}<e^{-s}\cdot \tilde{u}(a)$. If $t\to 0$, then $s\to 0$ so we conclude that $1\leq \tilde{u}(a)$.
\end{proof}
Now we present our main approximation theorem.
\begin{theorem}\label{mainapproximation}
	Given a function $u\in H_{\lambda}$, there exists a sequence $\{u_m\}\subset H_{\lambda}$ of continuous function on $\mathbb{C}^n$ satisfying $u(z)=\lim_{m\to \infty}u_m(z)~\text{for each}~z\in \mathbb{C}^n$ and
\begin{equation}\label{supremum}
	u_m(z)=\sup\limits_{q_k\in \mathcal{H}_{\lambda}}\{|q_k(z)|^{\frac{1}{\rho_k}}: \textup{bideg}\,q_k=(\rho_k,0),\,|q_k|^{\frac{1}{\rho_k}}\leq u_m~\text{on}~\mathbb{C}^n\}.
\end{equation}
\end{theorem}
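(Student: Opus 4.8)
The plan is to realize the $u_m$ as finite maxima of functions of the form $|q|^{1/\rho}$ with $q\in\mathcal{H}_{\lambda}$, $\textup{bideg}\,q=(\rho,0)$; for such $u_m$ the identity \eqref{supremum} is automatic. Indeed, if $u_m=\max\{|q_{k}|^{1/\rho_{k}}:k\in S\}$ for a finite set $S$ of such polynomials, then each $|q_k|^{1/\rho_k}$ is $\le u_m$ and is therefore a competitor in the supremum \eqref{supremum}, so that supremum is $\ge u_m$; it is trivially $\le u_m$; and $u_m$ is continuous and lies in $H_{\lambda}$. (One could equally take $u_m=v_m+\tfrac1m\,\nu$, where $v_m\in H_{\lambda}$ is continuous and $\nu(z):=\max_{1\le j\le n}|z_j|^{1/\lambda_j}$ is a fixed continuous, proper element of $H_{\lambda}$, so that $u_m$ dominates the proper function $\tfrac1m\,\nu$ and \eqref{supremum} follows from Proposition~\ref{approximation prop}.) Hence it suffices to produce continuous $v_m\in H_{\lambda}$---finite maxima of $|q|^{1/\rho}$'s---with $v_m(z)\to u(z)$ for every $z$; and because $v(\Phi^{X}(z,t))=e^{-\textup{Re}\,t}v(z)$ forces every element of $H_\lambda$ to be homogeneous, it is enough to check this pointwise convergence on the \emph{compact} set $S^{2n-1}$.

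To build the maxima I would start from Theorem~\ref{quasihomstar}, which gives $u=\bigl(\limsup_{m}g_m\bigr)^{\ast}$ with $g_m:=|q_m|^{1/\rho_m}$, $q_m\in\mathcal{H}_{\lambda}$, $\textup{bideg}\,q_m=(\rho_m,0)$; the $g_m$ are continuous, lie in $H_{\lambda}$, and form a locally uniformly bounded family by the Bernstein--Walsh inequality \eqref{classical BW inequality}. Put $h_N:=\bigl(\sup_{m\ge N}g_m\bigr)^{\ast}$. Then $h_N\in H_{\lambda}$, the sequence $\{h_N\}$ decreases, $h_N\ge u$, and $h_N=\sup_{m\ge N}g_m$ off a pluripolar set; since two plurisubharmonic functions agreeing off a pluripolar set coincide, $\lim_N h_N=u$ on all of $\mathbb{C}^n$. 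Each $h_N$ is the upper envelope of the continuous functions $G_{N,k}:=\max_{N\le m\le k}g_m\in H_{\lambda}$, which increase with $k$ to $\sup_{m\ge N}g_m$. The aim is then to extract a diagonal sequence $v_l:=G_{N_l,k_l}$ with $N_l\to\infty$ that converges to $u$ on $S^{2n-1}$: pushing $N_l\to\infty$ gives $\limsup_l v_l\le\lim_N h_N=u$, while for the reverse inequality one uses, at a point of $S^{2n-1}$ where $\sup_{m\ge N}g_m=h_N$, the continuity of the $g_m$ together with a covering/compactness argument on the sphere to select the cutoff $k_l$ uniformly so that $G_{N_l,k_l}>u-1/l$ there; Lemma~\ref{Hartogslemma} and a Dini-type argument on the compact $S^{2n-1}$ are the natural tools here.

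The step I expect to be the main obstacle is upgrading "off a pluripolar set" to "at every point": a priori the $G_{N,k}$ converge to $h_N$ only quasi-everywhere, so the scheme above first yields $v_l\to u$ only off a pluripolar---hence surface-measure-null---subset $E\subset S^{2n-1}$, and the points of $E$ must still be reached. Following Siciak's strategy, I would remove $E$ by enlarging the generating family: using Theorem~\ref{quasihomexp}, Proposition~\ref{polyhull}, and the fact that $\{u<1\}$ is a $\lambda$-balanced domain of holomorphy, one produces, for each prescribed direction, quasi-homogeneous polynomials $q$ with $|q|^{1/\rho}\le u$ on $\mathbb{C}^n$ whose values approximate $u$ there, and adjoins countably many of these (indexed by a dense set of directions) to the family before diagonalizing. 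This exceptional-set removal is the technical heart of the proof. Once continuous finite maxima $v_l\in H_{\lambda}$ with $v_l\to u$ everywhere are in hand, one takes $u_m:=v_m$ (or $u_m:=v_m+\tfrac1m\,\nu$ and appeals to Proposition~\ref{approximation prop}), and the theorem follows.
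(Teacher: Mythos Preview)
Your diagonal-extraction scheme is a genuinely different route from the paper's, and the step you correctly single out as the obstacle---removing the pluripolar exceptional set where the finite maxima $G_{N,k}$ may fail to climb to $u$---is not closed by the fix you propose. To manufacture, for a prescribed point $z_0$, a polynomial $q\in\mathcal{H}_\lambda$ with $|q|^{1/\rho}\le u$ on $\mathbb{C}^n$ and $|q(z_0)|^{1/\rho}$ close to $u(z_0)$, you are essentially re-proving Proposition~\ref{approximation prop} at $z_0$; but that argument hinges on the sublevel sets $K^t=\{u\le e^{-t}\}$ being compact, which needs $u$ continuous (for closedness) and $u$ dominating a proper function (for boundedness). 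Neither holds for a general $u\in H_\lambda$, and invoking only Theorem~\ref{quasihomexp} and Proposition~\ref{polyhull} does not supply the missing compactness. Adjoining polynomials for a \emph{dense} set of directions is also insufficient: even though $u$ is upper-semicontinuous, the modulus of continuity of each added $|q_w|^{1/\rho}$ is uncontrolled as $w$ approaches an exceptional point, so the value there is not captured.

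The paper avoids this entirely by producing the continuous approximants via a mollification adapted to the $\lambda$-action: with $(z,z'):=(z_1z'_1,\dots,z_nz'_n)$ and a standard bump $\omega$, it sets
\[
u_m(z)=\int_{\mathbb{C}^n}u\!\left(z+\tfrac{1}{m}(z,z')\right)\omega(z')\,d\mu(z')\;+\;\frac{1}{m}\sum_{k=1}^{n}|z_k|^{1/\lambda_k}.
\]
Because the perturbation $z\mapsto z+\tfrac{1}{m}(z,z')$ commutes with $\Phi^X$, the integral term lies in $H_\lambda$; standard smoothing gives continuity and $u_m\to u$; and the added sum is a continuous proper member of $H_\lambda$ that makes Proposition~\ref{approximation prop} directly applicable, yielding \eqref{supremum}. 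This is exactly your parenthetical alternative $u_m=v_m+\tfrac1m\nu$, with the elusive continuous $v_m\in H_\lambda$ supplied not by finite maxima but by the multiplicative convolution---the one idea your proposal is missing.
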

\begin{proof}
Denote by $\mu$ the Lebesgue measure on $\mathbb{C}^n=\mathbb{R}^{2n}$ and by $(\cdot,\cdot)$ the function on $\mathbb{C}^n \times \mathbb{C}^n$ defined as
\[
(z,z'):=(z_1z'_1,\dots,z_nz'_n)\in \mathbb{C}^n,~z=(z_1,\dots,z_n),\,z'=(z'_1,\dots,z'_n)\in \mathbb{C}^n.
\]
Let $\omega:\mathbb{C}^n\to \mathbb{R}$ be a smooth function such that the support of $\omega$ is compact in $B^n$ and $\int_{\mathbb{C}^n}\omega(z)d\mu(z)=1$. Fix $u\in H_{\lambda}\subset L^1_{\textup{loc}}(\mathbb{R}^{2n}).$ For each positive integer $m\geq 1$ and $z=(z_1,\dots,z_n)\in \mathbb{C}^n$, define
\[
u_m(z):=\int_{\mathbb{C}^n}u(z+\frac{1}{m}(z,z'))\,\omega(z')\,d\mu(z')+\frac{1}{m}\cdot \sum_{k=1}^{n}|z_k|^{\frac{1}{\lambda_k}}.
\]
Then it follows from the standard smoothing arguments that each $u_m$ is continuous, $u_m\in H_{\lambda}$, and $\lim_{m\to \infty}u_m=u$ on $\mathbb{C}^n$. Since the inequality
\[
u_m(z)\geq \frac{1}{m}\cdot \sum_{k=1}^{n}|z_k|^{\frac{1}{\lambda_k}}:=v_m(z)~\text{for any}~z\in \mathbb{C}^n
\]
holds for each $m\geq 1$ and the function $v_m$ is proper, we obtain the desired conclusion by applying Proposition \ref{approximation prop} to each $u_m$.
\end{proof}

\subsection{$\lambda$-pluripolar sets}\label{subsection4.2}
 
\begin{definition}
\normalfont
A set $E\subset \mathbb{C}^n$ is $\textit{pluripolar}$ if there exists a nonconstant function $u\in \textup{PSH}(\mathbb{C}^n)$ such that $E \subset \{z\in \mathbb{C}^n: u(z)=-\infty\}$. Let $(X,\lambda)$ be a vector field on $\mathbb{C}^n$. A set $E\subset \mathbb{C}^n$ is called $\lambda$-$\textit{pluripolar}$ if there exists a function $u\in H_{\lambda}$ such that $E \subset \{z\in \mathbb{C}^n: u(z)=0\}$. 
\end{definition}

Note that a $\lambda$-pluripolar set is always pluripolar by Remark $\ref{remarklogofHlambdaftn}$. It turns out that a $\lambda$-circular pluripolar set is $\lambda$-pluripolar; see Theorem \ref{characterizationofHppsets}. 

The following lemma is fundamental for the arguments in this subsection.
\begin{lemma}\label{product}
Let $\{u_m\}\subset H_{\lambda}$ be a sequence satisfying $\|u_m\|_{B^n}\leq 1$ for each $m$ and define
\[
 u:=\prod_{m=1}^{\infty}(u_m)^{\frac{1}{2^m}}.
\]
Then $u\equiv 0$ or $u\in H_{\lambda}$.
\end{lemma}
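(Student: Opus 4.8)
The plan is to verify directly that $u$ is a well-defined, finite, nonnegative, plurisubharmonic function on $\mathbb{C}^n$ satisfying the homogeneity relation $u(\Phi^X(z,t))=e^{-\textup{Re}\,t}u(z)$; granting this, either $u\equiv 0$, or $u$ is nonconstant (by homogeneity) and hence $u\in H_\lambda$. The only delicate point will be plurisubharmonicity, since a product of plurisubharmonic functions need not be plurisubharmonic. The idea is to pass to logarithms on the truncated products, use the normalization $\|u_m\|_{B^n}\le 1$ to obtain monotonicity on the closed ball, and then transport plurisubharmonicity from $B^n$ to all of $\mathbb{C}^n$ via the $\lambda$-homogeneous structure.

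First I would record that $B^n$ is $\lambda$-balanced (immediate since every $\lambda_k>0$), and that $u_m\le 1$ on $\overline{B^n}$: for $w\in S^{2n-1}$ one has $\Phi^X(w,1/k)\in B^n$ and $u_m(\Phi^X(w,1/k))=e^{-1/k}u_m(w)\le 1$, so letting $k\to\infty$ gives $u_m(w)\le 1$. Next set $v_N:=\prod_{m=1}^{N}(u_m)^{1/2^m}$. By Remark \ref{remarklogofHlambdaftn} each $\log u_m$ is plurisubharmonic on $\mathbb{C}^n$, so $\log v_N=\sum_{m=1}^{N}2^{-m}\log u_m$ is a positive linear combination of plurisubharmonic functions, hence plurisubharmonic, and therefore $v_N=\exp(\log v_N)\in\textup{PSH}(\mathbb{C}^n)$. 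A direct computation with the flow gives $v_N(\Phi^X(z,t))=e^{-(1-2^{-N})\textup{Re}\,t}\,v_N(z)$; and on $\overline{B^n}$ the factor $(u_{N+1})^{1/2^{N+1}}$ is $\le 1$, so $\{v_N\}$ is nonincreasing there.

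Consequently, on $\overline{B^n}$ the sequence $\{v_N\}$ is nonincreasing and nonnegative, hence pointwise convergent; and writing an arbitrary $z\ne 0$ as $\Phi^X(w,\tau)$ with $w\in S^{2n-1}$, $\tau\in\mathbb{R}$, the homogeneity of $v_N$ shows $\{v_N(z)\}$ converges as well. Thus $u=\prod_{m=1}^{\infty}(u_m)^{1/2^m}=\lim_{N}v_N$ is well defined, finite and nonnegative, and letting $N\to\infty$ in the relation for $v_N$ gives $u(\Phi^X(z,t))=e^{-\textup{Re}\,t}u(z)$ for all $z\in\mathbb{C}^n$, $t\in\mathbb{C}$. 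On $B^n$, $u$ is the decreasing limit of the plurisubharmonic functions $v_N$ and is not identically $-\infty$ (being $\ge 0$), so $u\in\textup{PSH}(B^n)$. To pass to all of $\mathbb{C}^n$: for $a\ne 0$, since $\|\Phi^X(a,\tau)\|\to 0$ as $\tau\to+\infty$, choose $\tau_0>0$ and $r>0$ with $\Phi^X(B^n(a;r),\tau_0)\subset B^n$; on $B^n(a;r)$ one has $u(z)=e^{\tau_0}(u\circ\Phi^X(\cdot,\tau_0))(z)$, a positive multiple of the composition of the plurisubharmonic function $u|_{B^n}$ with the holomorphic map $\Phi^X(\cdot,\tau_0)$, hence plurisubharmonic on $B^n(a;r)$; together with $0\in B^n$ this yields $u\in\textup{PSH}(\mathbb{C}^n)$.

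Finally, $u\ge 0$, $u\in\textup{PSH}(\mathbb{C}^n)$, and $u$ is $\lambda$-homogeneous. If $u\not\equiv 0$, pick $z_0$ with $u(z_0)>0$; then $s\mapsto u(\Phi^X(z_0,s))=e^{-s}u(z_0)$ is nonconstant, so $u$ is a nonconstant function and therefore $u\in H_\lambda$. I expect the main obstacle to be precisely the plurisubharmonicity step: the key observation is that it suffices to prove it on $B^n$, where $\|u_m\|_{B^n}\le 1$ forces the truncated products to be nonincreasing, and then to propagate it through the $\lambda$-homogeneity; everything else is a routine verification.
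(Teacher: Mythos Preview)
Your proof is correct and shares the paper's core strategy: use Remark~\ref{remarklogofHlambdaftn} to see that the partial products $v_N=\prod_{m\le N}(u_m)^{1/2^m}$ are plurisubharmonic, arrange for them to decrease on a ball so that the limit is plurisubharmonic there, and then pass to all of~$\mathbb{C}^n$. The one point of departure is the last step. The paper first records the growth bound $v(z)\le\|v\|_{B^n}\cdot\max(1,\|z\|^{1/\min(\lambda)})$ for $v\in H_\lambda$, uses it to renormalize so that $\{v_N\cdot R^{-1/\min(\lambda)}\}$ is decreasing on $B^n(0;R)$ for every $R>1$, and then lets $R\to\infty$. You instead prove plurisubharmonicity only on $B^n$ and transport it to a neighborhood of any $a\ne 0$ by writing $u=e^{\tau_0}\,(u\circ\Phi^X(\cdot,\tau_0))$ as a pullback along the holomorphic map $\Phi^X(\cdot,\tau_0)$. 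Your route avoids establishing the growth estimate (though the paper needs it later anyway), while the paper's route avoids the explicit pullback argument; both are short and clean.
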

\begin{proof}
For each $z\in \mathbb{C}^n-\{0\}$, define $z'_{\lambda}:=(z_1\cdot \|z\|^{-\frac{\lambda_1}{\textup{min}(\lambda)}},\dots,z_n\cdot \|z\|^{-\frac{\lambda_n}{\textup{min}(\lambda)}}).$ Then $z'_{\lambda}\in B^n$ whenever $\|z\|\geq 1$. Let $v\in H_{\lambda}$ and note that 
\[
v(z)=\|z\|^{\frac{1}{\textup{min}(\lambda)}}\cdot v(z'_{\lambda})\leq \|z\|^{\frac{1}{\textup{min}(\lambda)}}\cdot \|v\|_{S^{2n-1}}~\text{if}~\|z\|\geq 1.
\]
Therefore, we have
\begin{align}\label{Hlambdagrowth}
v(z)\leq \|v\|_{S^{2n-1}}\cdot \textup{max}\,(1,\|z\|^{\frac{1}{\textup{min}(\lambda)}})    =\|v\|_{B^n}\cdot \textup{max}\,(1,\|z\|^{\frac{1}{\textup{min}(\lambda)}}) 
\end{align}
for any $z\in \mathbb{C}^n$. Let $R>1$. Then by (\ref{Hlambdagrowth}), we have $
u_m(z)\cdot {R^{-\frac{1}{\textup{min}(\lambda)}}}\leq 1$
if $\|z\|\leq R$.  For each positive integer $\ell$, define
\[
v_\ell:=\prod_{m=1}^{\ell}(u_m\cdot {R^{-\frac{1}{\textup{min}(\lambda)}}})^{\frac{1}{2^m}}.
\]
 Note that $v_\ell\in \textup{PSH}(\mathbb{C}^n)$ by Remark \ref{remarklogofHlambdaftn}. Furthermore, the nonincreasing sequence $\{v_{\ell}\}$ is uniformly bounded on $B^n(0;R)$. So $v:=\lim_{\ell\to \infty}v_\ell=u\cdot R^{-\frac{1}{\textup{min}(\lambda)}}$  defines a plurisubharmonic function on $B^n(0;R)$. Since $R>0$ was arbitrary, we have $v\in H_{\lambda}$ if $v \not\equiv 0$ on $\mathbb{C}^n$. Then $u\equiv 0$ or $u\in H_{\lambda}$ as desired.
\end{proof}
\begin{proposition}\label{countablepp}
A countable union of $\lambda$-pluripolar sets is $\lambda$-pluripolar.
\end{proposition}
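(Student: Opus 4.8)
The plan is to realize the countable union as the zero set of a single suitably normalized infinite product of functions in $H_\lambda$, and then invoke Lemma \ref{product} to conclude that the product lies in $H_\lambda$ (and is nonconstant). Suppose $E = \bigcup_{m=1}^\infty E_m$ where each $E_m$ is $\lambda$-pluripolar, so there is $u_m \in H_\lambda$ with $E_m \subset \{u_m = 0\}$. The first step is a normalization: since $u_m \in H_\lambda$ is nonconstant, $\|u_m\|_{S^{2n-1}} > 0$ (a nonnegative function in $H_\lambda$ vanishing on $S^{2n-1}$ would, by the homogeneity relation $u(\Phi^X(z,t)) = e^{-\operatorname{Re} t} u(z)$ and the foliation of $\mathbb{C}^n\setminus\{0\}$ by integral curves, vanish identically). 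Replace $u_m$ by $\tilde u_m := u_m / \|u_m\|_{B^n}$; then $\tilde u_m \in H_\lambda$, $\|\tilde u_m\|_{B^n} = 1$, and still $E_m \subset \{\tilde u_m = 0\}$.

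Next I would set $u := \prod_{m=1}^\infty (\tilde u_m)^{1/2^m}$. By Lemma \ref{product}, either $u \equiv 0$ or $u \in H_\lambda$. The heart of the argument is to rule out $u \equiv 0$, i.e.\ to produce a point where the product is strictly positive. For this I would use the estimate \eqref{Hlambdagrowth} from the proof of Lemma \ref{product}, which gives $\tilde u_m(z) \le \max(1, \|z\|^{1/\min(\lambda)})$ for all $z$, together with a lower bound at a well-chosen point. Concretely: each $\{\tilde u_m = 0\}$ is $\lambda$-pluripolar hence pluripolar (Remark \ref{remarklogofHlambdaftn}), so $\bigcup_m \{\tilde u_m = 0\}$ is a countable union of pluripolar sets, which has empty interior — in fact, since each is closed (continuity is not assumed, but $\{\tilde u_m = 0\}$ is pluripolar and a pluripolar set cannot contain a ball), $\mathbb{C}^n \setminus \bigcup_m\{\tilde u_m = 0\}$ is nonempty. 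Pick $z_0$ in this complement; then $\tilde u_m(z_0) > 0$ for every $m$, and one checks the infinite product $\prod_m \tilde u_m(z_0)^{1/2^m}$ converges to a positive number: by \eqref{Hlambdagrowth} the factors satisfy $\tilde u_m(z_0)^{1/2^m} \le C^{1/2^m}$ for $C = \max(1,\|z_0\|^{1/\min(\lambda)})$, so $\sum_m 2^{-m} \log \tilde u_m(z_0)$ is bounded above, and it is finite (not $-\infty$) because each $\log \tilde u_m(z_0)$ is finite. Hence $u(z_0) > 0$, so $u \not\equiv 0$, and therefore $u \in H_\lambda$. Since $E \subset \bigcup_m \{\tilde u_m = 0\} \subset \{u = 0\}$, the set $E$ is $\lambda$-pluripolar.

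The step I expect to be the main obstacle is the nonvanishing of $u$, i.e.\ locating the point $z_0$ and controlling the infinite product there. The subtlety is that the $\tilde u_m$ are not assumed continuous, so "closedness" of $\{\tilde u_m = 0\}$ must be handled via the fact that pluripolar sets are small (e.g.\ they have Lebesgue measure zero, or more simply cannot contain an open ball because $\log \tilde u_m \in \operatorname{PSH}(\mathbb{C}^n)$ is $-\infty$ there, forcing $\log \tilde u_m \equiv -\infty$ which contradicts nonconstancy). A clean alternative that sidesteps topology entirely: since each $\{\tilde u_m = 0\}$ has Lebesgue measure zero (as a pluripolar set), the union does too, so its complement is dense — in particular nonempty — and one picks $z_0$ there. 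With $z_0$ fixed, the convergence of the product is the routine estimate sketched above using \eqref{Hlambdagrowth}. Everything else is bookkeeping with the definition of $\lambda$-pluripolar and the already-established Lemma \ref{product}.
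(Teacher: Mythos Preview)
Your overall strategy---normalize, take the infinite product, invoke Lemma~\ref{product}---matches the paper's. The gap is in the step where you claim $u(z_0)>0$. You write that $\sum_m 2^{-m}\log\tilde u_m(z_0)$ ``is finite (not $-\infty$) because each $\log\tilde u_m(z_0)$ is finite.'' This inference is false: a weighted series of finite negative terms can diverge to $-\infty$. Concretely, take $n=2$, $\lambda=(1,1)$, and $E_m=\{z_2=a_m z_1\}$ with $a_m\to a$ and $|a-a_m|=e^{-3^m}$. Then $\tilde u_m(z)=|z_2-a_m z_1|/\sqrt{1+|a_m|^2}$, and the point $z_0=(1,a)$ lies outside every $\{\tilde u_m=0\}$, yet $\sum_m 2^{-m}\log\tilde u_m(z_0)\sim -\sum_m(3/2)^m=-\infty$, so $u(z_0)=0$. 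Avoiding the individual zero sets gives you no uniform lower bound on $\tilde u_m(z_0)$, and without one the product can collapse.

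The paper repairs exactly this point with two moves you are missing. First, it replaces $E_m$ by $E_1\cup\cdots\cup E_m$ so that the sequence is increasing; this means that if you later pass to a subsequence $\{u_{m_k}\}$, the product $\prod_k u_{m_k}^{2^{-k}}$ still vanishes on all of $E$. Second, it uses Hartogs' lemma (Lemma~\ref{Hartogslemma}) applied to the normalized family $\{\tilde u_m\}$ (which is locally uniformly bounded above by \eqref{Hlambdagrowth}) to produce a point $a$ with $\limsup_m\tilde u_m(a)\geq 1/2$, and then extracts a subsequence with $\tilde u_{m_k}(a)>1/2$ for all $k$. This furnishes the uniform lower bound that your argument lacks, giving $u(a)\geq 1/2>0$ for the product over the subsequence. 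Your proof can be fixed by inserting these two moves; as written, the nonvanishing step does not go through.
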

\begin{proof}
 Let $\{E_m:m\geq 1\}$ be a sequence of $\lambda$-pluripolar sets. Note that a finite union of $\lambda$-pluripolar sets is $\lambda$-pluripolar by Remark \ref{remarklogofHlambdaftn}. So by replacing each $E_m$ with $E_1\cup\cdots \cup E_m,$ we may assume that the given sequence is increasing. For each $m$, let  $u_m\in H_{\lambda}$ be a function such that $u_m\equiv 0$ on $E_m$. Then the sequence $\{u_m\}$ can be normalized so that $\|u_m\|_{B^n}=1$. By Lemma \ref{Hartogslemma}, there is a point $a\in \mathbb{C}^n$ such that $\limsup_{m\to \infty}u_m(a)\geq 1/2.$ Choose an increasing sequence $\{m_k\}$ of positive integers such that $u_{m_k}(a)> 1/2$ for any $k$. Then by Lemma \ref{product}, we have
\[
 u:=\prod\limits_{k=1}^{\infty}(u_{m_k})^{\frac{1}{2^k}}\in H_{\lambda}
\]
since $u(a)\geq 1/2.$ Note also that $u\equiv 0$ on $E:=\bigcup_{m=1}^{\infty}E_m$ as the sequence $\{E_m\}$ is increasing. Therefore, $E$ is $\lambda$-pluripolar.
\end{proof}

\begin{theorem}\label{pluripolarchar1}
Let $\{u_i\}_{i\in I}\subset H_{\lambda}$ be a given family and define
\[
u:=\sup\limits_{i\in I} u_i,\;S:=\{z\in \mathbb{C}^n:u(z)<+\infty\}.
\]
Then the following are equivalent.
\begin{enumerate}
\setlength\itemsep{0.05em}
\item $\|u\|_{B^n}<+\infty$.
\item $u^{\ast}\in H_{\lambda}$.
\item There exists a point $a\in \mathbb{C}^n$ such that $u^{\ast}(a)<+\infty$.
\item There exists an open ball $B^n(a;r)$ such that $\|u\|_{B^n(a;r)}<+\infty$.
\item $S$ is not $\lambda$-pluripolar.
\end{enumerate}
\end{theorem}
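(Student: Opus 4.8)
The plan is to establish the five equivalences through the cycle of implications $(1)\Rightarrow(2)\Rightarrow(3)\Rightarrow(4)\Rightarrow(5)\Rightarrow(1)$, tacitly assuming the index set $I$ is nonempty (the opposite case being vacuous). For $(1)\Rightarrow(2)$ I would argue as follows: since $u_i\leq u$, one has $\|u_i\|_{B^n}\leq\|u\|_{B^n}<+\infty$ for every $i$, so by the growth estimate \eqref{Hlambdagrowth} the family $\{u_i\}_{i\in I}$ is uniformly bounded from above on each ball $B^n(0;R)$, hence locally uniformly bounded from above on $\mathbb{C}^n$; by the classical theorem of Brelot and Cartan, the upper-semicontinuous regularization $u^{\ast}$ is then plurisubharmonic. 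One checks that $u^{\ast}\geq 0$ (as each $u_i\geq 0$), that $u^{\ast}(\Phi^{X}(z,t))=e^{-\textup{Re}\,t}u^{\ast}(z)$ by a direct $\limsup$ computation using that $w\mapsto\Phi^{X}(w,-t)$ is a homeomorphism sending a neighborhood of $\Phi^{X}(z,t)$ onto a neighborhood of $z$, and that $u^{\ast}$ is nonconstant --- for otherwise the homogeneity would force $u^{\ast}\equiv 0$, hence $u\equiv 0$ and $u_{i_0}\equiv 0$ for any fixed $i_0\in I$, contradicting that $H_{\lambda}$ consists of nonconstant functions. Thus $u^{\ast}\in H_{\lambda}$.

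The three middle implications should be short. $(2)\Rightarrow(3)$ is immediate, since a function in $H_{\lambda}$ is plurisubharmonic and nonnegative, hence everywhere finite. For $(3)\Rightarrow(4)$, write $u^{\ast}(a)=\limsup_{w\to a}u(w)=\inf_{r>0}\sup_{B^n(a;r)}u$; finiteness of $u^{\ast}(a)$ then yields some $r>0$ with $\|u\|_{B^n(a;r)}=\sup_{B^n(a;r)}u<+\infty$. For $(4)\Rightarrow(5)$, such a ball lies in $S$, and since a $\lambda$-pluripolar set is pluripolar (Remark~\ref{remarklogofHlambdaftn}), hence of Lebesgue measure zero and with empty interior, the set $S$ --- containing an open ball --- cannot be $\lambda$-pluripolar.

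The crux is $(5)\Rightarrow(1)$, for which I would argue the contrapositive: assuming $\|u\|_{B^n}=+\infty$, produce $w\in H_{\lambda}$ vanishing on $S$. Since $\|u\|_{B^n}=\sup_{i\in I}\|u_i\|_{B^n}$, for each $m\geq 1$ choose $i_m\in I$ with $\|u_{i_m}\|_{B^n}\geq e^{4^m}$ and set $w_m:=u_{i_m}/\|u_{i_m}\|_{B^n}\in H_{\lambda}$, so $\|w_m\|_{B^n}=1$. By \eqref{Hlambdagrowth} the sequence $\{w_m\}$ is locally uniformly bounded from above, so Hartogs' lemma (Lemma~\ref{Hartogslemma}) on $\overline{B^n}$ forbids $\limsup_{m\to\infty}w_m\leq 0$ everywhere (else $w_m\leq\frac12$ on $\overline{B^n}$ for all large $m$, contradicting $\|w_m\|_{B^n}=1$); pick $a$ with $\limsup_{m}w_m(a)>0$, and after passing to a subsequence assume $w_{m_k}(a)>c$ for a fixed $c\in(0,1)$. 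Put $w:=\prod_{k=1}^{\infty}(w_{m_k})^{1/2^k}$; then $w(a)\geq\prod_k c^{1/2^k}=c>0$, so $w\in H_{\lambda}$ by Lemma~\ref{product}. Finally, for $z\in S$ with $M_z:=u(z)<+\infty$ one has $w_{m_k}(z)\leq u(z)e^{-4^{m_k}}\leq M_z e^{-4^k}$ since $m_k\geq k$, whence, if $M_z>0$,
\[
\log w(z)=\sum_{k=1}^{\infty}\frac{1}{2^k}\log w_{m_k}(z)\leq\sum_{k=1}^{\infty}\frac{1}{2^k}\bigl(\log M_z-4^k\bigr)=\log M_z-\sum_{k=1}^{\infty}2^k=-\infty,
\]
while $w(z)=0$ trivially if $M_z=0$; hence $w\equiv 0$ on $S$ and $S$ is $\lambda$-pluripolar. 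I expect this last implication to be the main obstacle: because $S$ need not be a countable union of $\lambda$-pluripolar sets, Proposition~\ref{countablepp} does not apply, so one must build a single function of $H_{\lambda}$ killing all of $S$; the device that makes this work is the doubly exponential normalization $\|u_{i_m}\|_{B^n}\geq e^{4^m}$, which forces $\sum_k 2^{-k}4^k=\sum_k 2^k$ to diverge, together with Hartogs' lemma, which, upon passing to a subsequence, keeps the infinite product nonconstant so that Lemma~\ref{product} still places it in $H_{\lambda}$.
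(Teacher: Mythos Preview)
Your proof is correct and follows essentially the same route as the paper: the cycle $(1)\Rightarrow(2)\Rightarrow(3)\Rightarrow(4)\Rightarrow(5)\Rightarrow(1)$, with the contrapositive of the last implication established by normalizing a sequence with rapidly growing sup-norms, invoking Hartogs' lemma to locate a point where the normalized sequence stays bounded away from zero along a subsequence, and then applying Lemma~\ref{product} to obtain a function in $H_\lambda$ vanishing on $S$. The only cosmetic difference is in bookkeeping: the paper takes $\|u_{m_k}\|_{B^n}\geq e^{2^k}$ and then, when passing to the Hartogs subsequence $\{n_k\}$, additionally arranges $n_k>2k$ so that $\sum_k 2^{n_k-k}$ diverges; your choice $\|u_{i_m}\|_{B^n}\geq e^{4^m}$ builds in enough growth that the cruder bound $m_k\geq k$ already gives $\sum_k 4^k/2^k=\sum_k 2^k=+\infty$.
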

\begin{proof}
The implication $(1)\Longrightarrow(2)$ follows from (\ref{Hlambdagrowth}) and the implications $(2)\Longrightarrow(3)\Longrightarrow(4)\Longrightarrow(5)$ are obvious. We shall prove that $(5)$ does not hold if $(1)$ is false. Suppose that $\|u\|_{B^n}=+\infty$ and choose a subsequence $\{m_k\}\subset I$ such that $\|u_{m_k}\|_{B^n}\geq \textup{exp}\,(2^k)$. Then for each $k$, define
\[
v_k:=\frac{u_{m_k}}{\|u_{m_k}\|_{B^n}}\in H_{\lambda}
\] 
so that $\|v_k\|_{B^n}=1$. By Lemma \ref{Hartogslemma}, there exists a point $a\in \mathbb{C}^n$ and an increasing sequence $\{n_k\}$ of positive integers such that $v_{m_{n_k}}(a)\geq \frac{1}{2}$ and $n_k>2k$ for each $k$. Then by Lemma \ref{product}, we have $v:=\prod_{k=1}^{\infty}(v_{m_{n_k}})^{2^{-k}}\in H_{\lambda}$. If $z\in S$, then
\begin{align*}
v(z)\leq  u(z)\prod\limits_{k=1}^{\infty}\textup{exp}\,(-2^{n_k-k})\leq  u(z)\prod\limits_{k=1}^{\infty}\textup{exp}\,(-2^{k})=0.
\end{align*}
Therefore, $S$ is $\lambda$-pluripolar.
\end{proof}

	\subsection{Properties of the extremal function $\Psi_{E,\lambda}$ and the capacity $\rho_{\lambda}$}\label{subsection 4.3}
	\begin{theorem}\label{quasiextremal}
		For each compact set $K\subset \mathbb{C}^n$ and $z\in \mathbb{C}^n$,  we have 
		\[
		\Psi_{K,\lambda}(z)=\sup\,\{|q_m(z)|^{\frac{1}{\rho_m}}: q_m\in \mathcal{H}_{\lambda},\,\textup{bideg}\,q_m=(\rho_m,0),\,\|q_m\|_{K}\leq 1,\,m\geq1\}.
		\]
	\end{theorem}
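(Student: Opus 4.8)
The plan is to establish the two inequalities $\Theta_{K,\lambda}(z)\le\Psi_{K,\lambda}(z)$ and $\Psi_{K,\lambda}(z)\le\Theta_{K,\lambda}(z)$ separately, where $\Theta_{K,\lambda}$ denotes the supremum on the right-hand side. The first is immediate. If $q_m\in\mathcal{H}_{\lambda}$ has $\textup{bideg}\,q_m=(\rho_m,0)$ with $m\ge 1$ and $\|q_m\|_K\le 1$, then $\rho_m>\rho_0=0$ forces $q_m$ to be either $0$ or a nonconstant polynomial, so the function $u:=|q_m|^{\frac{1}{\rho_m}}$ lies in $H_{\lambda}$: it is nonnegative and plurisubharmonic, nonconstant when $q_m\ne 0$, and $u(\Phi^X(z,t))=|e^{-\rho_m t}q_m(z)|^{\frac{1}{\rho_m}}=e^{-\textup{Re}\,t}\,u(z)$. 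Since moreover $u\le 1$ on $K$, it is admissible in the definition of $\Psi_{K,\lambda}$, whence $|q_m(z)|^{\frac{1}{\rho_m}}=u(z)\le\Psi_{K,\lambda}(z)$; the case $q_m=0$ is trivial, and taking the supremum over all admissible $q_m$ gives $\Theta_{K,\lambda}\le\Psi_{K,\lambda}$.

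For the reverse inequality, fix $u\in H_{\lambda}$ with $u\le 1$ on $K$; it suffices to show $u(z)\le\Theta_{K,\lambda}(z)$ for every $z$. One cannot apply Proposition \ref{approximation prop} to $u$ directly, as $u$ need be neither continuous nor bounded below by a proper function, so I would first invoke Theorem \ref{mainapproximation} to get a sequence $\{u_m\}\subset H_{\lambda}$ of continuous functions satisfying the supremum formula \textup{(}\ref{supremum}\textup{)} and decreasing pointwise to $u$ (monotonicity comes from the standard mollification, each summand of $u_m$ in the proof of Theorem \ref{mainapproximation} being nonincreasing in $m$); in particular $u_m\ge u$. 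Since $u_m$ dominates the proper function $\frac1m\sum_k|z_k|^{\frac{1}{\lambda_k}}$, Proposition \ref{approximation prop} applies to $u_m$, but $u_m$ need not satisfy $u_m\le 1$ on $K$. I therefore renormalize: set $c_m:=\max(1,\|u_m\|_K)\ge 1$ and $v_m:=u_m/c_m$. Then $v_m\in H_{\lambda}$ (a positive scalar multiple of an $H_{\lambda}$ function), $v_m$ is continuous, $v_m\le 1$ on $K$, and $v_m\ge c_m^{-1}\frac1m\sum_k|z_k|^{\frac{1}{\lambda_k}}$ is still proper, so Proposition \ref{approximation prop} applies to $v_m$:
\[
v_m(z)=\sup\{|q_k(z)|^{\frac{1}{\rho_k}}: \textup{bideg}\,q_k=(\rho_k,0),\ |q_k|^{\frac{1}{\rho_k}}\le v_m\ \text{on}\ \mathbb{C}^n\}.
\]

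Each competitor $q_k$ above satisfies $|q_k|^{\frac{1}{\rho_k}}\le v_m\le 1$ on $K$, hence $\|q_k\|_K\le 1$, so $q_k$ is admissible for $\Theta_{K,\lambda}$ and $|q_k(z)|^{\frac{1}{\rho_k}}\le\Theta_{K,\lambda}(z)$; taking the supremum gives $v_m(z)\le\Theta_{K,\lambda}(z)$ for all $m$ and $z$. It remains to let $m\to\infty$: since $u_m\searrow u$ on the compact set $K$ with $u_m$ continuous and $u$ upper semicontinuous, a Dini-type argument (extract a convergent sequence of maximizers and use $u_\ell\ge u_m$ for $\ell\le m$) shows $\|u_m\|_K\to\|u\|_K\le 1$, so $c_m\to 1$ and $v_m(z)=u_m(z)/c_m\to u(z)$ for every $z$. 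Hence $u(z)\le\Theta_{K,\lambda}(z)$, and the supremum over all admissible $u$ yields $\Psi_{K,\lambda}\le\Theta_{K,\lambda}$. I expect the only point requiring genuine care to be the renormalization together with the limit $c_m\to 1$: one must verify that $v_m$ retains all the hypotheses of Proposition \ref{approximation prop} and that $\sup_K u_m\to\sup_K u$, which is where the monotone convergence $u_m\searrow u$ — hence the specific mollification used in Theorem \ref{mainapproximation} — is essential; the rest is formal bookkeeping with the scaling identity defining $H_{\lambda}$.
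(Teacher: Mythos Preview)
Your overall strategy coincides with the paper's: approximate $u\in H_\lambda$ by the sequence $\{u_m\}$ of Theorem~\ref{mainapproximation}, use the supremum representation~\eqref{supremum} to compare $u_m$ with the polynomial supremum (your $\Theta_{K,\lambda}$, the paper's $\hat\Psi_{K,\lambda}$) after arranging $u_m\le 1+o(1)$ on $K$, and let $m\to\infty$. The one difference lies in how that control on $K$ is obtained. You assert $u_m\searrow u$ and then run a Dini argument to get $\|u_m\|_K\to\|u\|_K$; but monotone decrease is not claimed in Theorem~\ref{mainapproximation}, and for a generic mollifier $\omega$ the first summand $\int u\big(z+\tfrac1m(z,z')\big)\,\omega(z')\,d\mu(z')$ need not be nonincreasing in $m$. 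It does become nonincreasing if $\omega$ is taken radially symmetric---writing $g_z(w):=u(z+(z,w))\in\textup{PSH}(\mathbb{C}^n)$, the integral is then a radially weighted spherical mean of a subharmonic function, which decreases as the scale $1/m$ shrinks---so your argument is easily repaired by adding that hypothesis on $\omega$. The paper sidesteps the issue entirely: it invokes Theorem~\ref{pluripolarchar1} to see that $\{u_m\}$ is locally uniformly bounded, notes that $\Omega_\epsilon:=\{u<1+\epsilon\}$ is an open neighborhood of $K$ on which $\limsup_m u_m=u<1+\epsilon$, and applies Hartogs' Lemma~\ref{Hartogslemma} to conclude $u_m\le 1+2\epsilon$ on $K$ for all large $m$, giving $u\le(1+2\epsilon)\Theta_{K,\lambda}$ without any monotonicity. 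Both routes are valid; the paper's is slightly more robust since it uses only the bare pointwise convergence guaranteed by Theorem~\ref{mainapproximation}.
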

	\begin{proof}
		Denote by $\hat{\Psi}_{K,\lambda}$ the function on the right-hand side of the equation above. Let $q_m\in \mathcal{H}_{\lambda}$ with $\textup{bideg}\,q_m=(\rho_m,0)$, $m\geq 1$. Then $|q_m|^{\frac{1}{\rho_m}}\in H_{\lambda}$ so we have $\hat{\Psi}_{K,\lambda}\leq \Psi_{K,\lambda}$ on $\mathbb{C}^n$. To prove that $\hat{\Psi}_{K,\lambda}\geq \Psi_{K,\lambda}$ on $\mathbb{C}^n$, fix  $u\in H_{\lambda}$ with $\|u\|_{K}\leq 1$. By Theorem \ref{mainapproximation}, there is a sequence $\{u_m\}\subset H_{\lambda}$ of continuous functions such that $u=\lim_{m\to \infty}u_m$ on $\mathbb{C}^n.$ Then it follows from Theorem \ref{pluripolarchar1} that $\{u_m\}$ is locally uniformly bounded on $\mathbb{C}^n$. Let $\epsilon>0$ be a positive number and note that the set 
		\[
		\Omega_{\epsilon}:=\{z\in \mathbb{C}^n:u(z)<1+\epsilon\}
		\] 
		is an open neighborhood of $K$. By Lemma \ref{Hartogslemma}, there is a positive integer $N=N(K,\epsilon)$ such that $u_m\leq 1+2\epsilon~\text{on}~K~\text{if}~m\geq N.$ Then by (\ref{supremum}), we have
		\[
		u_m(z) \leq (1+2\epsilon)\hat{\Psi}_{K,\lambda}(z)~\text{for each}~ z\in \mathbb{C}^n,\,m\geq N
		\]
		so that $u\leq (1+2\epsilon)\hat{\Psi}_{K,\lambda}$ on $\mathbb{C}^n$. Therefore, $\Psi_{K,\lambda}\leq \hat{\Psi}_{K,\lambda}$ on $\mathbb{C}^n$ as desired.
	\end{proof}
\begin{proposition}\label{increasingunion1}
Let $E\subset \mathbb{C}^n$ be a set and define $E_m:=E\cap B^n(0;m)$ for each positive integer $m$. Then $\{\Psi_{E_m,\lambda}\}$ decreases to  $\Psi_{E,\lambda}$ on $\mathbb{C}^n.$
\end{proposition}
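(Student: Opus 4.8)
The proposition is a formal consequence of the definition of $\Psi_{E,\lambda}$ for unbounded sets together with the elementary monotonicity $F\subset F'\Rightarrow\Psi_{F,\lambda}\geq\Psi_{F',\lambda}$, so I do not anticipate any real obstacle; the only points that need care are the consistency between the bounded and unbounded cases of the definition and the simple observation that every bounded subset of $E$ is contained in some $E_m$.

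First I would record the monotonicity. If $F\subset F'$, then every $u\in H_\lambda$ satisfying $u\leq 1$ on $F'$ also satisfies $u\leq 1$ on $F$, so the supremum defining $\Psi_{F,\lambda}$ is taken over a family containing the one defining $\Psi_{F',\lambda}$; hence $\Psi_{F,\lambda}\geq\Psi_{F',\lambda}$ pointwise on $\mathbb{C}^n$. Applying this to the increasing chain $E_1\subset E_2\subset\cdots$ shows that $\{\Psi_{E_m,\lambda}\}$ is nonincreasing, so it converges pointwise to $L:=\inf_m\Psi_{E_m,\lambda}$; it then remains to prove $L=\Psi_{E,\lambda}$ on $\mathbb{C}^n$. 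If $E$ is bounded this is immediate, since $E_m=E$ for all $m$ large enough. So I would assume $E$ unbounded and fix $z\in\mathbb{C}^n$, using $\Psi_{E,\lambda}(z)=\inf\{\Psi_{F,\lambda}(z):F\subset E\text{ bounded}\}$.

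For the inequality $\Psi_{E,\lambda}(z)\leq L(z)$: each $E_m=E\cap B^n(0;m)$ is a bounded subset of $E$, hence one of the competitors in the infimum above, so $\Psi_{E,\lambda}(z)\leq\Psi_{E_m,\lambda}(z)$ for every $m$, and taking the infimum over $m$ gives $\Psi_{E,\lambda}(z)\leq L(z)$. For the reverse inequality $L(z)\leq\Psi_{E,\lambda}(z)$: given any bounded $F\subset E$, pick $m$ with $F\subset B^n(0;m)$, so that $F\subset E\cap B^n(0;m)=E_m$, and then the monotonicity gives $\Psi_{F,\lambda}(z)\geq\Psi_{E_m,\lambda}(z)\geq L(z)$; taking the infimum over all bounded $F\subset E$ yields $\Psi_{E,\lambda}(z)\geq L(z)$. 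The two inequalities together give $L(z)=\Psi_{E,\lambda}(z)$, which is the assertion. As indicated, the argument is entirely soft and uses none of the pluripotential-theoretic machinery of the earlier subsections.
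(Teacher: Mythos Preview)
Your proof is correct and follows essentially the same approach as the paper: both arguments use the bounded-set monotonicity $F\subset F'\Rightarrow\Psi_{F,\lambda}\geq\Psi_{F',\lambda}$ to see that the sequence decreases, then show the limit equals $\Psi_{E,\lambda}$ by noting that each $E_m$ is a competitor in the infimum defining $\Psi_{E,\lambda}$ while every bounded $F\subset E$ is eventually contained in some $E_m$. Your version simply spells out the monotonicity and the bounded/unbounded case distinction more explicitly than the paper's terse paragraph.
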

In particular, Proposition \ref{increasingunion1} implies that $\Psi_{B,\lambda}\leq  \Psi_{A,\lambda}$ if $A\subset B$.
\begin{proof}
Note that each $E_m$ is bounded and $E_m\subset E_{m+1}$. So we have 
\[
\varphi:=\lim\limits_{m\to \infty}\Psi_{E_m,\lambda}\geq \Psi_{E,\lambda}~\text{on}~\mathbb{C}^n.
\]
For each bounded set $F\subset E$, there is an integer $m_0$ such that $F\subset E_m$ if $m\geq m_0$. Then $\Psi_{F,\lambda}\geq \varphi \geq \Psi_{E,\lambda}$ and $\varphi=\Psi_{E,\lambda}$. 
\end{proof}
\begin{theorem}\label{compactunion}
If $\Omega\subset \mathbb{C}^n$ is open and $\{K_m\}$ is an increasing sequence of compact subsets of $\Omega$ satisfying $K_m\subset \textup{int}\,K_{m+1}$ and $\Omega=\bigcup_{m=1}^{\infty}K_m$, then $\{\Psi^{\ast}_{K_m,\lambda}\}$ decreases to $\Psi_{\Omega,\lambda}$ on $\mathbb{C}^n$. Furthermore, $\Psi_{\Omega,\lambda}\in H_{\lambda}$ and
\[
\Psi_{\Omega,\lambda}=\textup{inf}\,\{\Psi^{\ast}_{K,\lambda}:K~\text{is a compact subset of }~\Omega\}.
\]
\end{theorem}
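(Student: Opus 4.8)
The plan is to prove that the (necessarily decreasing) limit $\varphi:=\lim_{m\to\infty}\Psi^{\ast}_{K_m,\lambda}$ equals $\Psi_{\Omega,\lambda}$, and then to read off the remaining two assertions from this identification. We may assume $\Omega\neq\emptyset$, since otherwise every $\Psi^{\ast}_{K_m,\lambda}$ as well as $\Psi_{\Omega,\lambda}$ equals $+\infty$ and there is nothing to prove.

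First I would normalize the exhaustion. Since $\Omega$ is open and $\bigcup_mK_m=\Omega$, the Baire category theorem yields some $K_{m_1}$ with nonempty interior; as $\{K_m\}$ is increasing, discarding the first $m_1-1$ terms (which changes neither the limit nor $\bigcup_mK_m$) lets us assume that each $K_m$ contains a ball $B^n(a_m;r_m)$. If $q\in\mathcal{H}_\lambda$ has $\textup{bideg}\,q=(\rho_k,0)$ and $\|q\|_{K_m}\le1$, then $\|q\|_{B^n(a_m;r_m)}\le1$, so the Bernstein--Walsh inequality (\ref{classical BW inequality}) together with (\ref{degreeandrho_k}) gives $|q(z)|^{1/\rho_k}\le\{\textup{max}(1,\|z-a_m\|/r_m)\}^{1/\textup{min}(\lambda)}$ for all $z$. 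By Theorem \ref{quasiextremal}, $\Psi_{K_m,\lambda}$ is the supremum of such functions $|q|^{1/\rho_k}$, hence it is locally bounded on $\mathbb{C}^n$; in particular $\|\Psi_{K_m,\lambda}\|_{B^n}<\infty$, so Theorem \ref{pluripolarchar1} gives $\Psi^{\ast}_{K_m,\lambda}\in H_\lambda$. Monotonicity of $E\mapsto\Psi_{E,\lambda}$ under inclusion (Proposition \ref{increasingunion1}) and of $\limsup$ make $\{\Psi^{\ast}_{K_m,\lambda}\}$ decreasing, so $\varphi$ exists; being a decreasing limit of nonnegative plurisubharmonic functions, $\varphi$ is plurisubharmonic, and letting $m\to\infty$ in the functional equation defining $H_\lambda$ shows $\varphi\in H_\lambda$ unless $\varphi\equiv0$. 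In the latter case $\Psi_{\Omega,\lambda}\equiv0$ and the theorem is immediate, so from now on $\varphi\in H_\lambda$.

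Next I would identify $\varphi$. From $K_m\subset\Omega$ and Proposition \ref{increasingunion1} we have $\Psi^{\ast}_{K_m,\lambda}\ge\Psi_{K_m,\lambda}\ge\Psi_{\Omega,\lambda}$, hence $\varphi\ge\Psi_{\Omega,\lambda}$. For the reverse inequality the key point is that $\varphi\le1$ on $\Omega$: since $K_m\subset\textup{int}\,K_{m+1}$ we have $\Omega=\bigcup_m\textup{int}\,K_m$, so a given $z\in\Omega$ lies in $\textup{int}\,K_m$ for some $m$; because $\Psi_{K_m,\lambda}\le1$ on $K_m$ directly from the definition, taking the upper-semicontinuous regularization over a ball around $z$ contained in $K_m$ gives $\Psi^{\ast}_{K_m,\lambda}(z)\le1$, whence $\varphi(z)\le1$. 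Now $\varphi\in H_\lambda$ with $\varphi\le1$ on $\Omega$ is admissible in the supremum defining $\Psi_{F,\lambda}$ for every bounded $F\subset\Omega$ (and for $F=\Omega$ itself when $\Omega$ is bounded), so $\varphi\le\Psi_{F,\lambda}$; taking the infimum over such $F$ yields $\varphi\le\Psi_{\Omega,\lambda}$. Thus $\varphi=\Psi_{\Omega,\lambda}$, which proves the first assertion and shows $\Psi_{\Omega,\lambda}=\varphi\in H_\lambda$.

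Finally I would derive the infimum formula. Each $K_m$ is a compact subset of $\Omega$, so $\textup{inf}\{\Psi^{\ast}_{K,\lambda}:K\text{ compact}\subset\Omega\}\le\lim_m\Psi^{\ast}_{K_m,\lambda}=\Psi_{\Omega,\lambda}$. Conversely, any compact $K\subset\Omega$ is covered by the increasing open family $\{\textup{int}\,K_m\}$, hence $K\subset K_{m_0}$ for some $m_0$, and then Proposition \ref{increasingunion1} gives $\Psi^{\ast}_{K,\lambda}\ge\Psi^{\ast}_{K_{m_0},\lambda}\ge\Psi_{\Omega,\lambda}$, so the infimum is $\ge\Psi_{\Omega,\lambda}$; combining the two bounds finishes the proof. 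I expect the only genuinely delicate step to be the verification that $\Psi^{\ast}_{K_m,\lambda}\in H_\lambda$ — equivalently, that $\Psi_{K_m,\lambda}$ is not identically $+\infty$ — which is precisely why the reduction to $K_m$ with nonempty interior, so that the Bernstein--Walsh bound and Theorem \ref{pluripolarchar1} apply, is the crux; the remainder is bookkeeping with the extremal function.
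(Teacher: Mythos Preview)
Your proof is correct and follows essentially the same approach as the paper: establish $\varphi\le 1$ on $\Omega$ by exploiting that each point eventually lies in $\textup{int}\,K_m$, invoke Theorem~\ref{pluripolarchar1} to obtain $\Psi^{\ast}_{K_m,\lambda}\in H_\lambda$ and hence $\varphi\in H_\lambda$, and then conclude $\varphi\le\Psi_{\Omega,\lambda}$ from the definition of the extremal function. The only cosmetic differences are that the paper cites the growth estimate (\ref{Hlambdagrowth}) in place of your Bernstein--Walsh bound via Theorem~\ref{quasiextremal}, and that your Baire-category normalization is superfluous since the hypothesis $K_m\subset\textup{int}\,K_{m+1}$ already forces $K_m$ to have nonempty interior for $m\ge 2$.
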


\begin{proof}
Note first that $\varphi:=\lim_{m\to \infty}\Psi^{\ast}_{K_m,\lambda}\geq \Psi_{\Omega,\lambda}$ on $\mathbb{C}^n$. Fix $a\in \Omega$ and choose positive numbers $m_0,r$ such that $B^n(a;r)\subset \textup{int}\,K_m$ if $m\geq m_0$. Then $(\ref{Hlambdagrowth})$ implies that  
\begin{equation}\label{psiest}
\Psi^{\ast}_{K_m,\lambda}(z)\leq \bigg(\frac{\|z-a\|}{r}\bigg)^{\frac{1}{\textup{min}(\lambda)}}<1 ~\text{if}~z\in B^n(a;r),~m\geq m_0.
\end{equation}
Therefore, $\varphi<1$ on $B^n(a;r)$. Since $a$ was arbitrary, we also have $\varphi\leq 1$ on $\Omega$. By Theorem \ref{pluripolarchar1}, $\varphi\in H_{\lambda}$ so that $\varphi\leq \Psi_{\Omega,\lambda}$ on $\mathbb{C}^n$. 
\end{proof}

\begin{proposition}\label{domainofholo1}
If $\Omega$ is a $\lambda$-balanced domain of holomorphy, then
\[
\Omega=\{z\in \mathbb{C}^n:\Psi_{\Omega,\lambda}(z)<1\}.
\]
\end{proposition}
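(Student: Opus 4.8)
The plan is to prove the two inclusions of the asserted equality separately; the inclusion $\Omega\subseteq\{z:\Psi_{\Omega,\lambda}(z)<1\}$ is elementary and uses only the openness of $\Omega$, while the reverse inclusion is where the domain-of-holomorphy hypothesis is essential. For the first inclusion, fix $z_0\in\Omega$; continuity of $t\mapsto\Phi^X(z_0,t)$ and openness of $\Omega$ produce a small $s\in(0,1)$ with $w_0:=\Phi^X(z_0,-s)\in\Omega$ and a closed ball $F:=\overline{B^n(w_0;\delta)}\subseteq\Omega$. For every $u\in H_{\lambda}$ with $u\le 1$ on $F$ (resp.\ on $\Omega$ when $\Omega$ is bounded), the defining relation of $H_{\lambda}$ gives $e^{s}u(z_0)=u(\Phi^X(z_0,-s))=u(w_0)\le 1$, so $u(z_0)\le e^{-s}$; taking the supremum over such $u$ and, in the unbounded case, passing to the infimum over bounded subsets as in Definition~\ref{definitionofextrftn}, we obtain $\Psi_{\Omega,\lambda}(z_0)\le e^{-s}<1$. (This inclusion is in fact already contained in the estimate~(\ref{psiest}) in the proof of Theorem~\ref{compactunion}.)

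For the reverse inclusion I would mimic the second half of the proof of Theorem~\ref{quasihomstar}. The case $\Omega=\mathbb{C}^n$ is trivial, since $\Psi_{\mathbb{C}^n,\lambda}\equiv 0$ (exhaust by the balls $\overline{B^n(0;R)}$, $R\to\infty$, in the infimum defining $\Psi$ and use the growth estimate~(\ref{Hlambdagrowth}) together with the homogeneity relation). So assume $\partial\Omega\ne\emptyset$ and fix, using that $\Omega$ is a domain of holomorphy, a holomorphic $f:\Omega\to\mathbb{C}$ whose domain of existence is $\Omega$. Since $\Omega$ is $\lambda$-balanced and contains the origin, Theorem~\ref{quasihomexp} gives $f=\sum_{m\ge 0}q_m$ on $\Omega$ with $q_m\in\mathcal{H}_{\lambda}$, $\textup{bideg}\,q_m=(\rho_m,0)$; put
\[
v:=\Big(\limsup_{m\to\infty}|q_m|^{\frac{1}{\rho_m}}\Big)^{\ast},
\]
so that $v\in H_{\lambda}$ by Theorem~\ref{quasihomstar}. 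The proof of Theorem~\ref{quasihomexp} shows $\limsup_m|q_m|^{1/\rho_m}\le 1$ on the open set $\Omega$, hence $v\le 1$ there; the argument of the first paragraph then makes $v$ admissible for $\Psi_{\Omega,\lambda}$, so $v\le\Psi_{\Omega,\lambda}$ on $\mathbb{C}^n$.

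Now set $D:=\{z:\Psi_{\Omega,\lambda}(z)<1\}$. By Theorem~\ref{compactunion}, $\Psi_{\Omega,\lambda}\in H_{\lambda}$, so $D$ is an open, connected, $\lambda$-balanced set containing $\Omega$, and on $D$ we have $\limsup_m|q_m|^{1/\rho_m}\le v\le\Psi_{\Omega,\lambda}<1$. By the Hartogs-type argument in the proof of Theorem~\ref{quasihomexp} (Lemma~\ref{Hartogslemma} with Lemma~\ref{convergence}), the series $\sum_m q_m$ therefore converges locally uniformly on $D$ to some holomorphic $g:D\to\mathbb{C}$ with $g|_\Omega=f$. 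If $D\supsetneq\Omega$, then, $\Omega$ being open and a proper subset of the connected set $D$, it is not relatively closed in $D$, so some $z_1\in\partial\Omega$ lies in $D$; but then $g$ extends $f$ holomorphically across $z_1$, contradicting that $\Omega$ is the domain of existence of $f$. Hence $D\subseteq\Omega$, and combined with the first inclusion this yields $\Omega=\{z:\Psi_{\Omega,\lambda}(z)<1\}$. I expect the main obstacle to be precisely this last step: transporting the local-uniform convergence estimates of Theorem~\ref{quasihomexp}, which were stated on $\Omega$, to the a priori larger set $D$, and then converting ``$D\supsetneq\Omega$'' into a bona fide violation of $\Omega$ being the domain of existence of $f$ — which needs the connectedness of $D$ (the reason Theorem~\ref{compactunion} is invoked) and the classical fact that a domain of holomorphy is the domain of existence of a single holomorphic function. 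The rest is bookkeeping with the homogeneity relation defining $H_{\lambda}$ and with the definition of $\Psi_{\Omega,\lambda}$.
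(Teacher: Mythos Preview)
Your proof is correct and follows the same overall architecture as the paper's: the easy inclusion $\Omega\subset D$ via openness (the paper simply cites (\ref{psiest}) and Theorem~\ref{compactunion}), and the hard inclusion by expanding a function $f$ whose domain of existence is $\Omega$ into quasi-homogeneous components $\sum q_m$, showing that this series converges on all of $D$, and then invoking the domain-of-holomorphy hypothesis.

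The one substantive difference lies in how the key estimate $\limsup_m|q_m|^{1/\rho_m}\le\Psi_{\Omega,\lambda}$ is obtained. The paper uses Theorem~\ref{quasiextremal} (the polynomial description of $\Psi_{K,\lambda}$) to get the Bernstein--Walsh-type bound $|q_m(z)|\le\|q_m\|_K\{\Psi^{\ast}_{K,\lambda}(z)\}^{\rho_m}$ for each compact $K\subset\Omega$, bounds $\|q_m\|_K$ via Proposition~\ref{asympholo}, and then passes to the infimum over $K$ via Theorem~\ref{compactunion}. You instead package $\limsup_m|q_m|^{1/\rho_m}$ into $v=(\limsup_m|q_m|^{1/\rho_m})^{\ast}\in H_\lambda$ via Theorem~\ref{quasihomstar}, observe $v\le 1$ on $\Omega$ from the proof of Theorem~\ref{quasihomexp}, and conclude $v\le\Psi_{\Omega,\lambda}$ directly from the definition. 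Your route is slightly more self-contained (it sidesteps Theorem~\ref{quasiextremal}, whose proof in turn rests on Theorem~\ref{mainapproximation}); the paper's route, on the other hand, yields the reusable inequality~(\ref{domofholoext}), which is quoted later in the proof of Theorem~\ref{main theorem}. You are also more explicit than the paper about why $D$ is connected and why $D\supsetneq\Omega$ forces a boundary point of $\Omega$ to lie in $D$; the paper compresses this into a single sentence.
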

\begin{proof}
By Theorem \ref{compactunion} and (\ref{psiest}), we have $\Omega\subset \hat{\Omega}:=\{z\in \mathbb{C}^n:\Psi_{\Omega,\lambda}(z)<1\}$. To prove that $\hat{\Omega}\subset \Omega$, let a $f$ be a holomorphic function on $\Omega$. By Theorem \ref{quasihomexp}, one can choose a sequence $\{q_m\}\in \mathcal{H}_{\lambda}$ such that $\textup{bideg}\,q_m=(\rho_m,0)$ for each $m$ and $f=\sum_{m=0}^{\infty}q_m$ on $\Omega$. Choose a compact subset $K\subset \Omega$ and $z\in \mathbb{C}^n$. Then it follows from Theorem \ref{quasiextremal} that 
\[
|q_m(z)|\leq \|q_m\|_{K}\cdot \{\Psi^{\ast}_{K,\lambda}(z)\}^{\rho_m}
\]
if $m\geq 1$. Note that the map $t\in \mathbb{H}\to f\circ \Phi^X(z,t)$ is well-defined for any $z\in K$ as $\Omega$ is $\lambda$-balanced. So $\{q_m\}$ is uniformly bounded on $K$ by Proposition \ref{asympholo} and
\[
\limsup_{m\to \infty}|q_m(z)|^{\frac{1}{\rho_m}}\leq \Psi^{\ast}_{K,\lambda}(z).
\]
This reduces to
\begin{equation}\label{domofholoext}
\limsup_{m\to \infty}|q_m(z)|^{\frac{1}{\rho_m}}\leq \Psi_{\Omega,\lambda}(z)
\end{equation}
 by Theorem \ref{compactunion}. Therefore, $f$ extends to a holomorphic function on a domain $\hat{\Omega}\supset \Omega$ by Lemma \ref{convergence}. Then we conclude that $\hat{\Omega}=\Omega$ as $\Omega$ is a domain of holomorphy.
\end{proof}
\begin{theorem}\label{smallestdomainofholo}
If $\Omega\subset \mathbb{C}^n$ is an open set, then 
\[
\hat{\Omega}:=\{z\in \mathbb{C}^n:\Psi_{\Omega,\lambda}(z)<1\}
\]
is the smallest $\lambda$-balanced domain of holomorphy containing $\Omega$. Furthermore, we have $\Psi_{\hat{\Omega},\lambda}=\Psi_{\Omega,\lambda}$ on $\mathbb{C}^n$.
\end{theorem}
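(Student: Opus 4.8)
The plan is to establish, in order: (i) $\hat\Omega$ is a $\lambda$-balanced domain of holomorphy; (ii) $\Omega\subset\hat\Omega$; (iii) $\hat\Omega$ is contained in every $\lambda$-balanced domain of holomorphy that contains $\Omega$; and (iv) $\Psi_{\hat\Omega,\lambda}=\Psi_{\Omega,\lambda}$ on $\mathbb{C}^n$. Throughout I would use freely that $\Psi_{\Omega,\lambda}\in H_\lambda$ (Theorem \ref{compactunion}), the monotonicity $A\subset B\Rightarrow\Psi_{B,\lambda}\le\Psi_{A,\lambda}$ (Proposition \ref{increasingunion1}), and the identity $D=\{z\in\mathbb{C}^n:\Psi_{D,\lambda}(z)<1\}$ valid for every $\lambda$-balanced domain of holomorphy $D$ (Proposition \ref{domainofholo1}).

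For (i): since $\Psi_{\Omega,\lambda}$ is plurisubharmonic, $\hat\Omega=\{\Psi_{\Omega,\lambda}<1\}$ is open, and it contains the origin because $\Psi_{\Omega,\lambda}(0)=0$ (every $u\in H_\lambda$ vanishes at $0$ by its transformation law). If $z\in\hat\Omega$ and $t\in\mathbb{H}$, then $\Psi_{\Omega,\lambda}(\Phi^X(z,t))=e^{-\textup{Re}\,t}\Psi_{\Omega,\lambda}(z)<1$, so $\hat\Omega$ is $\lambda$-balanced; and the continuous path $s\in[0,\infty]\mapsto\Phi^X(z,s)$, with the convention $\Phi^X(z,\infty):=0$, stays in $\hat\Omega$ and joins $z$ to the origin, so $\hat\Omega$ is connected, hence a domain. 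Finally, being the unit sublevel set of a plurisubharmonic function, $\hat\Omega$ is pseudoconvex and therefore a domain of holomorphy; this is the same classical fact already invoked in the proof of Theorem \ref{quasihomstar} for sublevel sets of functions in $H_\lambda$. For (ii): if $z\in\Omega$ then $z$ has a compact neighbourhood $K$ with $K\subset\Omega$, and $\Psi_{\Omega,\lambda}(z)\le\Psi^{\ast}_{K,\lambda}(z)<1$ by Theorem \ref{compactunion} and the estimate (\ref{psiest}), so $z\in\hat\Omega$.

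For (iii): let $D$ be a $\lambda$-balanced domain of holomorphy with $\Omega\subset D$. By Proposition \ref{domainofholo1}, $D=\{\Psi_{D,\lambda}<1\}$, while Proposition \ref{increasingunion1} gives $\Psi_{D,\lambda}\le\Psi_{\Omega,\lambda}$ on $\mathbb{C}^n$; hence $\hat\Omega=\{\Psi_{\Omega,\lambda}<1\}\subset\{\Psi_{D,\lambda}<1\}=D$. Combined with (i)--(ii), this shows $\hat\Omega$ is the smallest $\lambda$-balanced domain of holomorphy containing $\Omega$. For (iv): $\Psi_{\hat\Omega,\lambda}\le\Psi_{\Omega,\lambda}$ by monotonicity, both functions belong to $H_\lambda$ (Theorem \ref{compactunion}), and by (i) and Proposition \ref{domainofholo1} they have the same unit sublevel set, namely $\hat\Omega$. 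It then remains to prove the elementary fact that two functions $u,v\in H_\lambda$ with $\{u<1\}=\{v<1\}$ are equal: for $z\neq 0$ the real orbit $s\mapsto\Phi^X(z,s)$ lies in $\{u<1\}$ exactly for $s>\log u(z)$ and in $\{v<1\}$ exactly for $s>\log v(z)$ (with $\log 0:=-\infty$), so these thresholds coincide and $u(z)=v(z)$; and $u(0)=v(0)=0$.

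The step I expect to be the most delicate is (iv): since elements of $H_\lambda$ are only upper semicontinuous, the passage from equality of the unit sublevel sets to equality of the functions must go through the homogeneity relation along each individual flow line, with separate attention to the orbits on which $u$ (equivalently $v$) vanishes identically, i.e.\ those lying entirely in the sublevel set. The only ingredient imported from outside the present development is that a pseudoconvex open subset of $\mathbb{C}^n$ is a domain of holomorphy, used once in (i) and already relied upon in the proof of Theorem \ref{quasihomstar}.
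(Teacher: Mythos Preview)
Your proof is correct and follows essentially the same approach as the paper: the paper likewise invokes Theorem~\ref{compactunion} to get $\Psi_{\Omega,\lambda}\in H_\lambda$ so that $\hat\Omega$ is a $\lambda$-balanced domain of holomorphy, proves minimality exactly as in your step (iii) via monotonicity and Proposition~\ref{domainofholo1}, and for the equality $\Psi_{\hat\Omega,\lambda}=\Psi_{\Omega,\lambda}$ observes that both functions lie in $H_\lambda$ with the same unit sublevel set and then refers back to the orbit argument from the proof of Theorem~\ref{quasihomstar}. Your write-up is more explicit on the points the paper leaves implicit (connectedness of $\hat\Omega$, the containment $\Omega\subset\hat\Omega$, and the orbit-threshold argument for (iv), including the degenerate case $u\equiv 0$ on an orbit), but the logical skeleton is the same.
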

\begin{proof}
Since $\Psi_{\Omega,\lambda}\in H_{\lambda}$ by Theorem \ref{compactunion}, $\hat{\Omega}$ is a $\lambda$-balanced domain of holomorphy containing $\Omega$. Let $G\supset \Omega$ be a $\lambda$-balanced domain of holomorphy. Then $\Psi_{G,\lambda}\leq \Psi_{\Omega,\lambda}$ and by Proposition \ref{domainofholo1}, we have
\[
\hat{\Omega}=\{z\in \mathbb{C}^n:\Psi_{\Omega,\lambda}(z)<1\}\subset \{z\in \mathbb{C}^n:\Psi_{G,\lambda}(z)<1\}=G.
\]
To prove that $\Psi_{\hat{\Omega},\lambda}=\Psi_{\Omega,\lambda}$, note first that
\[
\hat{\Omega}:=\{z\in \mathbb{C}^n:\Psi_{\Omega,\lambda}(z)<1\}=\{z\in \mathbb{C}^n:\Psi_{\hat{\Omega},\lambda}(z)<1\}
\]
by Proposition \ref{domainofholo1}. As $\Psi_{\hat{\Omega},\lambda},\Psi_{\Omega,\lambda}\in H_{\lambda}$, one can argue as in the proof of Theorem \ref{quasihomstar} that $\Psi_{\hat{\Omega},\lambda}$ $\equiv \Psi_{\Omega,\lambda}$ on $\mathbb{C}^n$.
\end{proof}
\begin{corollary}\label{domainofholoext}
If $E\subset \mathbb{C}^n$ is a bounded set, then
\[
\Psi_{E,\lambda}=\textup{sup}\,\{\Psi_{\Omega,\lambda}:\Omega~\text{is a}\; \lambda\text{-balanced open set containing}~E\}~\text{on}~\mathbb{C}^n.
\]
\end{corollary}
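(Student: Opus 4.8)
The plan is to establish the two inequalities between $\Psi_{E,\lambda}$ and $\sup\{\Psi_{\Omega,\lambda}:\Omega\supset E\ \lambda\text{-balanced open}\}$ separately. One direction is essentially free: if $\Omega$ is any $\lambda$-balanced open set with $\Omega\supset E$, then $\Psi_{\Omega,\lambda}\leq\Psi_{E,\lambda}$ on $\mathbb{C}^n$ by the monotonicity of the extremal function noted just after Proposition~\ref{increasingunion1}, and taking the supremum over all such $\Omega$ preserves the bound, giving $\sup\{\Psi_{\Omega,\lambda}\}\leq\Psi_{E,\lambda}$.

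For the reverse inequality I would argue pointwise. Since $E$ is bounded, Definition~\ref{definitionofextrftn} gives $\Psi_{E,\lambda}(z)=\sup\{u(z):u\in H_{\lambda},\ u\leq 1\text{ on }E\}$, so it suffices to produce, for every such $u$, every $z\in\mathbb{C}^n$, and every $\delta>0$, a $\lambda$-balanced open set $\Omega\supset E$ with $\Psi_{\Omega,\lambda}(z)\geq(1+\delta)^{-1}u(z)$. The natural candidate is the sublevel set $\Omega_{\delta}:=\{w\in\mathbb{C}^n:u(w)<1+\delta\}$: it is open because $u$ is plurisubharmonic, hence upper semicontinuous; it contains $E$ because $u\leq 1<1+\delta$ there; and it is $\lambda$-balanced because the defining homogeneity relation $u(\Phi^{X}(w,t))=e^{-\textup{Re}\,t}u(w)\leq u(w)$ for $t\in\mathbb{H}$ forces $\Phi^{X}(w,t)\in\Omega_{\delta}$ whenever $w\in\Omega_{\delta}$.

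It then remains to bound $\Psi_{\Omega_{\delta},\lambda}(z)$ from below. Since $H_{\lambda}$ is closed under multiplication by positive constants, $(1+\delta)^{-1}u\in H_{\lambda}$, and by construction $(1+\delta)^{-1}u<1$ on $\Omega_{\delta}$; hence $(1+\delta)^{-1}u$ is an admissible competitor in the definition of $\Psi_{\Omega_{\delta},\lambda}$, so $\Psi_{\Omega_{\delta},\lambda}(z)\geq(1+\delta)^{-1}u(z)$. If $\Omega_{\delta}$ happens to be unbounded, the same estimate persists: every bounded $F\subset\Omega_{\delta}$ still satisfies $(1+\delta)^{-1}u\leq 1$ on $F$, whence $\Psi_{F,\lambda}(z)\geq(1+\delta)^{-1}u(z)$, and one passes to the infimum over $F$ as prescribed by the unbounded case of Definition~\ref{definitionofextrftn}. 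Taking the supremum over all admissible $u$ yields $\sup\{\Psi_{\Omega,\lambda}(z)\}\geq(1+\delta)^{-1}\Psi_{E,\lambda}(z)$, and letting $\delta\to 0$ finishes the proof; the argument is valid whether or not $\Psi_{E,\lambda}(z)$ is finite.

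I do not expect a genuine obstacle here: the whole argument is a short manipulation of the definitions together with the monotonicity of $\Psi_{\cdot,\lambda}$. The only points needing any care are the verification that the sublevel set $\Omega_{\delta}$ is actually $\lambda$-balanced---which is immediate from the homogeneity built into the definition of $H_{\lambda}$---and the minor bookkeeping when $\Omega_{\delta}$ is unbounded, where the two-part definition of the extremal function must be invoked.
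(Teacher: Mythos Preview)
Your proof is correct and follows essentially the same approach as the paper: both obtain the easy inequality from monotonicity, and for the reverse both pick $u\in H_\lambda$ with $u\le 1$ on $E$, form the sublevel set $\Omega_\delta=\{u<1+\delta\}$ as a $\lambda$-balanced open neighborhood of $E$, and use $(1+\delta)^{-1}u$ as a competitor for $\Psi_{\Omega_\delta,\lambda}$. Your version is slightly more explicit in verifying that $\Omega_\delta$ is $\lambda$-balanced and in handling the unbounded case via the two-part definition, but the argument is the same.
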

\begin{proof}
Let $A(z)$ be the function on the right-hand side of the equation above and note that $A\leq \Psi_{E,\lambda}$ on $\mathbb{C}^n$. Fix a point $z_0\in \mathbb{C}^n$ and a positive number $m<\Psi_{E,\lambda}(z_0)$. Then one can choose a function $u\in H_{\lambda}$ such that $\|u\|_{E}\leq 1$ and $u(z_0)>m$. For each positive number $\epsilon>0$, the set $\Omega_{\epsilon}:=\{z\in \mathbb{C}^n:u(z)<1+\epsilon\}$ is a $\lambda$-balanced open neighborhood of $E$ and
\[
m<u(z_0)\leq (1+\epsilon)\Psi_{\Omega_{\epsilon},\lambda}(z_0)\leq (1+\epsilon)A(z_0).
\]
Therefore, $\Psi_{E,\lambda}(z_0)\leq A(z_0)$. Since $z_0$ is arbitrary, we have $\Psi_{E,\lambda}\leq A$ on $\mathbb{C}^n$. 
\end{proof}

\begin{definition}[\cite{Siciak81}]\label{pluricomplexdef}
	\normalfont
	For each positive 
	integer $n$, let
	\[ 
	\mathcal{L}_n:=\{u\in \textup{PSH}(\mathbb{C}^n):\exists\,C_u\in \mathbb{R}
	~ \text{such that}~u(z)\leq C_u+\textup{log}\,(1+\|z\|)
	~ \forall z\in \mathbb{C}^n \}.
	\]  
	If $E$ is a bounded subset of $\mathbb{C}^n$, then define
	\[
	V_E(z):=\textup{sup}\,\{u(z)\colon u\in \mathcal{L}_n ,u\leq 0 ~\text{on}~ E\},\;~\forall z\in \mathbb{C}^n.
	\]
	If $E\subset \mathbb{C}^n$ is unbounded, then 
	\[
	V_{E}(z):=\textup{inf}\,\{V_{F}(z): F\subset E~\text{is bounded} \},\;~\forall z\in \mathbb{C}^n.
	\]	
	For any set $E\subset \mathbb{C}^n$, the function $V_E$ is called the $\textit{pluricomplex Green function}$ of $E$. We also define $\Phi_E:=\textup{exp}\,V_E$.
\end{definition}
\begin{remark}
	\normalfont
	It follows from Theorem 3.3 and Theorem 3.8 in \cite{Siciak82} that Proposition \ref{increasingunion1} and Theorem \ref{compactunion} also hold when each extremal function of the form $\Psi_{E,\lambda}$ is replaced by the function $\Phi_{E}$. One can also proceed as in the proof of Corollary \ref{domainofholoext} to obtain the equation
	\begin{equation}\label{remark equation}
	\Phi_{E}=\textup{sup}\,\{\Phi_{\Omega}:\Omega~\text{is a}\; \lambda\text{-circled open set containing}~E\}~\text{on}~\mathbb{C}^n
	\end{equation}
	if $E$ is a bounded subset of $\mathbb{C}^n$ and $(X,\lambda)$ is a vector field on $\mathbb{C}^n.$
	
	 For the proof, let $B(z)$ be the function on the right-hand side of the equation above and note that $B\leq \Phi_{E}$ on $\mathbb{C}^n$. Fix a point $z_0\in \mathbb{C}^n$ and a positive number $m<\Phi_{E}(z_0)$. Then one can choose a nonnegative function $u:\mathbb{C}^n\to \mathbb{R}$ such that  $\textup{log}\,u\in \mathcal{L}_n,\,\|u\|_{E}\leq 1$ and $u(z_0)>m$. Note that the set $\Omega_{\epsilon}:=\{z\in \mathbb{C}^n:u(z)<1+\epsilon\}$ is an open neighborhood of $E$ for each positive number $\epsilon>0$. So there exists a $\lambda$-circular neighborhood $\Omega'_{\epsilon}\subset \Omega_{\epsilon}$ of $E$ and
	\[
	m<u(z_0)\leq (1+\epsilon)\Phi_{\Omega'_{\epsilon}}(z_0)\leq (1+\epsilon)B(z_0).
	\]
	Therefore, $\Phi_{E}(z_0)\leq B(z_0)$. Since $z_0$ is arbitrary, we have $\Phi_{E}\leq B$ on $\mathbb{C}^n$. 
\end{remark}
It is known that $\Phi^{\ast}_E\equiv +\infty$ on $\mathbb{C}^n$ if, and only if, $E$ is pluripolar. If $E$ is a compact subset of $\mathbb{C}^n$, then it follows from Theorem 4.12 in \cite{Siciak81} that  
\begin{equation}\label{pluriGreenpolynomial}
	\Phi_E(z)=\textup{sup}\,\big\{|q(z)|^{\frac{1}{\textup{deg}\,q}}:\|q\|_{E}\leq 1,\, q\in \mathbb{C}[z_1,\dots,z_n] \big\}~\text{for each}~z\in \mathbb{C}^n.
\end{equation}

\begin{theorem}\label{plurigreenandextgeneral}
	If $E\subset \mathbb{C}^n$ is a $\lambda$-circular set, then we have
		\begin{equation}\label{plurigreenandext}
			\textup{max}\,(1,\Psi_{E,\lambda})^{\textup{min}(\lambda)}\leq {\Phi_E}\leq \textup{max}\,(1,\Psi_{E,\lambda})^{\textup{max}(\lambda)}
		\end{equation}
	on $\mathbb{C}^n$. In particular, a $\lambda$-circular set $E$ is pluripolar if, and only if, $\Psi^{\ast}_{E,\lambda}\equiv +\infty$.
\end{theorem}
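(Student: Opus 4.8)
The plan is to establish the chain (\ref{plurigreenandext}) first for $\lambda$-circular \emph{compact} sets and then bootstrap to arbitrary $\lambda$-circular sets by the usual exhaustion arguments; the pluripolarity statement then follows formally. I would dispose of the left inequality $\max(1,\Psi_{E,\lambda})^{\min(\lambda)}\le\Phi_E$ first, as it needs no $\lambda$-circularity, only boundedness of $E$. Given $u\in H_\lambda$ with $\|u\|_E\le1$, Remark \ref{remarklogofHlambdaftn} gives $\log u\in\textup{PSH}(\mathbb{C}^n)$, and (\ref{Hlambdagrowth}) yields $\min(\lambda)\log u(z)\le\min(\lambda)\log\|u\|_{B^n}+\log(1+\|z\|)$, so $\min(\lambda)\log u\in\mathcal{L}_n$; since it is $\le0$ on $E$ it lies below $V_E$, i.e.\ $u^{\min(\lambda)}\le\Phi_E$ on $\mathbb{C}^n$, and taking the supremum over such $u$ gives $\Psi_{E,\lambda}^{\min(\lambda)}\le\Phi_E$. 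Because $0\in\mathcal{L}_n$ vanishes on $E$ we have $V_E\ge0$, hence $\Phi_E\ge1$, and thus $\max(1,\Psi_{E,\lambda})^{\min(\lambda)}\le\Phi_E$ for every bounded $E$.

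The right inequality $\Phi_E\le\max(1,\Psi_{E,\lambda})^{\max(\lambda)}$ is where $\lambda$-circularity is essential, and I expect this to be the main obstacle. I would begin with a $\lambda$-circular compact $K$. The key lemma is that if $q\in\mathbb{C}[z_1,\dots,z_n]$ is decomposed as $q=\sum_j q_j$ by collecting the monomials $a_kz^k$ sharing a common value of $(\lambda,k)$, then each $q_j\in\mathcal{H}_\lambda$ has bidegree $(\rho_{m_j},0)$ with $\deg q_j\le\deg q$, and moreover $\|q_j\|_K\le\|q\|_K$: this uses that $\Phi^X(z,i\theta)\in K$ for every $z\in K$ and $\theta\in\mathbb{R}$, together with the recovery formula $q_j(z)=\lim_{T\to\infty}\frac1{2T}\int_{-T}^{T}q(\Phi^X(z,i\theta))e^{i\rho_{m_j}\theta}\,d\theta$. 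Now fix $z$ with $s:=\Psi_{K,\lambda}(z)<\infty$ (otherwise there is nothing to prove) and $q$ with $\|q\|_K\le1$, $d:=\deg q\ge1$, and apply the decomposition to $q^k$: writing $q^k=\sum_{i=1}^{N_k}r_i$ with $r_i\in\mathcal{H}_\lambda$, $\|r_i\|_K\le1$, $\deg r_i\le kd$, Theorem \ref{quasiextremal} together with (\ref{degreeandrho_k}) gives $|r_i(z)|\le\max(1,s)^{\max(\lambda)kd}$, whence $|q(z)|^k=|q^k(z)|\le N_k\max(1,s)^{\max(\lambda)kd}$. The number $N_k$ of components is bounded by the number of $m\in\mathbb{Z}_{\geq 0}^n$ with $\sum_l\lambda_lm_l\le\max(\lambda)kd$, which grows only polynomially in $k$ exactly as in the count (\ref{indexgrowth}); so $N_k^{1/k}\to1$, and letting $k\to\infty$ yields $|q(z)|^{1/d}\le\max(1,s)^{\max(\lambda)}$. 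By (\ref{pluriGreenpolynomial}) this is $\Phi_K(z)\le\max(1,\Psi_{K,\lambda}(z))^{\max(\lambda)}$.

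To pass to a general $\lambda$-circular $E$: when $E$ is bounded, $\overline{E}$ is compact and still $\lambda$-circular (the maps $\Phi^X(\cdot,i\theta)$ are continuous), and $\Psi_{E,\lambda}=\Psi_{\overline{E},\lambda}$, $\Phi_E=\Phi_{\overline{E}}$ since the competitors in both extremal problems are upper semicontinuous; so the compact case applies and, combined with the first paragraph, (\ref{plurigreenandext}) holds for bounded $\lambda$-circular $E$. When $E$ is unbounded, I use the defining infima over bounded $F\subset E$, but first replace each $F$ by its $\lambda$-circular hull $F^\flat:=\bigcup_{\theta\in\mathbb{R}}\Phi^X(F,i\theta)$, which is again bounded (as $z\mapsto\Phi^X(z,i\theta)$ is unitary on $\mathbb{C}^n$), contained in $E$, and $\lambda$-circular, with $\Psi_{F^\flat,\lambda}\le\Psi_{F,\lambda}$ and $\Phi_{F^\flat}\le\Phi_F$; hence $\Psi_{E,\lambda}$ and $\Phi_E$ are the infima of $\Psi_{F^\flat,\lambda}$ and $\Phi_{F^\flat}$ over bounded $\lambda$-circular $F^\flat\subset E$. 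Since $t\mapsto\max(1,t)^{\min(\lambda)}$ and $t\mapsto\max(1,t)^{\max(\lambda)}$ are continuous and increasing, they commute with these infima, so (\ref{plurigreenandext}) for each $F^\flat$ gives (\ref{plurigreenandext}) for $E$.

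Finally, for the last assertion the same monotone continuous functions commute with the upper semicontinuous regularization, so (\ref{plurigreenandext}) upgrades to $\max(1,\Psi^{\ast}_{E,\lambda})^{\min(\lambda)}\le\Phi^{\ast}_E\le\max(1,\Psi^{\ast}_{E,\lambda})^{\max(\lambda)}$ on $\mathbb{C}^n$; since $\min(\lambda),\max(\lambda)\in(0,\infty)$, the two outer functions are identically $+\infty$ precisely when $\Psi^{\ast}_{E,\lambda}\equiv+\infty$, so $\Phi^{\ast}_E\equiv+\infty$ if and only if $\Psi^{\ast}_{E,\lambda}\equiv+\infty$, and combining with the recalled equivalence $\Phi^{\ast}_E\equiv+\infty\iff E$ pluripolar completes the proof. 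The genuine work lies in the second paragraph — justifying $\|q_j\|_K\le\|q\|_K$ via averaging over the imaginary-time flow, and controlling the polynomial growth of $N_k$ — while the remaining steps are bookkeeping with the definitions of $\Psi_{E,\lambda}$, $V_E$, and the extremal competitors.
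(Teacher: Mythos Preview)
Your proof is correct and, for the compact case, follows the paper's strategy closely: decompose a polynomial into its $\lambda$-homogeneous pieces, bound each piece on $K$ by $\|q\|_K$, and eliminate the combinatorial factor via the $k$th-power trick. The paper obtains $\|q_m\|_K\le\|q\|_K$ by invoking Proposition~\ref{asympholo} (after observing that $t\mapsto q(\Phi^X(z,t))$ is bounded on $\overline{\mathbb{H}}$ by its values on the imaginary axis), whereas your Bohr-type averaging formula gives the same bound more directly; and for the left inequality the paper argues through the polynomial descriptions (Theorem~\ref{quasiextremal} and~(\ref{pluriGreenpolynomial})) while you work with competitors $u\in H_\lambda$ and $\min(\lambda)\log u\in\mathcal{L}_n$ --- equivalent arguments in opposite directions.

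The genuine divergence is in the passage from compact to general $\lambda$-circular sets. The paper routes through \emph{open} $\lambda$-circular sets (Theorem~\ref{compactunion} together with Theorem~3.8 of \cite{Siciak82}), then to bounded sets via Corollary~\ref{domainofholoext} and~(\ref{remark equation}), and finally to unbounded sets via Proposition~\ref{increasingunion1} and Theorem~3.3 of \cite{Siciak82}. Your bootstrap is shorter and self-contained: for bounded $E$ you pass to the compact $\lambda$-circular closure $\overline{E}$ and use that both extremal functions are insensitive to closure (upper semicontinuity of competitors); for unbounded $E$ you saturate each bounded $F\subset E$ to its $\lambda$-circular hull $F^\flat$, noting that $\|\Phi^X(\cdot,i\theta)\|$ preserves norms so $F^\flat$ stays bounded, and then push the monotone continuous maps $t\mapsto\max(1,t)^{\alpha}$ through the defining infima. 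This avoids the detour through open sets and the external citations, at the modest cost of checking that closure and $\lambda$-hulling behave well --- which you do. Either route works; yours is arguably the cleaner reduction.
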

\begin{proof}
	We first prove the inequalities when $E$ is compact. Choose a polynomial $q_m\in \mathcal{H}_{\lambda}$ with $\textup{bideg}\,q_m=(\rho_m,0)$, $m\geq 1$, $\|q_m\|_{E}\leq 1$. Then by (\ref{degreeandrho_k}) and (\ref{pluriGreenpolynomial}), we have
	\begin{align*}
		|q_m(z)|^{\frac{1}{\rho_m}}&= (|q_m(z)|^{\frac{1}{\textup{deg}\,q_m}})^{\frac{\textup{deg}\,q_m}{\rho_m}}\leq (\Phi_E(z))^{\frac{\textup{deg}\,q_m}{\rho_m}} \leq (\Phi_E(z))^{\frac{1}{\textup{min}(\lambda)}}
		\end{align*}
	for any $z\in \mathbb{C}^n$ since $\Phi_E\geq 1$ on $\mathbb{C}^n$. So Theorem \ref{quasiextremal} implies that $\textup{max}\,(1,\Psi_{E,\lambda})\leq (\Phi_E)^{\frac{1}{\textup{min}(\lambda)}}.$ 
	
	To prove the other inequality, let $q\in \mathbb{C}[z_1,\dots,z_n]$ be a polynomial with $\|q\|_{E}\leq 1$. Then choose finitely many polynomials $\{q_0,\dots,q_N\}\subset  \mathcal{H}_{\lambda} $ such that $\textup{bideg}\,q_m=(\rho_m,0)$ for each $m\in \{0,\dots,N\}$ and $q=\sum_{m=0}^{N}q_m$ on $\mathbb{C}^n$. Since $E$ is a $\lambda$-circular compact set, $|q(\Phi^X(z,t))|\leq 1$ for each $z\in E$ and $t\in \mathbb{C}$ with $\textup{Re}\,t=0$. So it follows from Proposition \ref{asympholo} that
	$\|q_m\|_{E}\leq 1$ for each $m$.  Fix $z\in \mathbb{C}^n$. By Theorem \ref{quasiextremal}, we have
	\[
	|q_m(z)|\leq (\Psi_{E,\lambda}(z))^{\rho_m}\leq \textup{max}\,(1,\Psi_{E,\lambda}(z))^{\rho_m}
	\]
	for each $m\geq 1$ so that
	\[
	|q(z)|\leq \sum_{m=0}^{N}|q_m(z)|\leq (N+1)\cdot \textup{max}\,(1,\Psi_{E,\lambda}(z))^{\rho_N}.
	\]
	Then
	\begin{align}\label{plurigreenest}
		|q|^{\frac{1}{\textup{deg}\,q}}&\leq (N+1)^{\frac{1}{\textup{deg}\,q}}\cdot \textup{max}\,(1,\Psi_{E,\lambda})^{{\frac{\rho_N}{\textup{deg}\,q}}}\\
		&\leq  (N+1)^{\frac{\textup{max}(\lambda)}{\rho_N}}\cdot \textup{max}\,(1,\Psi_{E,\lambda})^{\textup{max}(\lambda)}.\nonumber
	\end{align}
	Fix an integer $k\geq 1$ and replace $q$ in (\ref{plurigreenest}) by $q^k$. Since $\textup{bideg}\,q_N^k=(k\rho_N,0)$, we obtain
	\[
	|q|^{\frac{1}{\textup{deg}\,q}}\leq  (kN+1)^{\frac{\textup{max}(\lambda)}{k\rho_N}}\cdot \textup{max}\,(1,\Psi_{E,\lambda})^{\textup{max}(\lambda)}.
	\]
	Then letting $k\to \infty$ yields 
	\[
	|q|^{\frac{1}{\textup{deg}\,q}}\leq \textup{max}\,(1,\Psi_{E,\lambda})^{\textup{max}(\lambda)}.
	\]
     Therefore, it follows from (\ref{pluriGreenpolynomial}) that
	\[
	\Phi_E(z)\leq \textup{max}\,(1,\Psi_{E,\lambda}(z))^{\textup{max}(\lambda)}.
	\]
	This proves the claim when $E$ is compact.
	
	If the given set $E$ is a $\lambda$-circular open set, then (\ref{plurigreenandext}) follows from Theorem 3.8 in \cite{Siciak82} and Theorem \ref{compactunion}. If $E$ is $\lambda$-circular and bounded, then the formula also holds by Corollary \ref{domainofholoext} and (\ref{remark equation}). Finally, the general formula for any $\lambda$-circular unbounded set follows from Theorem 3.3 in \cite{Siciak82} and Proposition \ref{increasingunion1}.
\end{proof}
By Theorem 3.6 in \cite{Siciak82}, the polynomially convex hull $\hat{K}$ of a compact set $K$ in $\mathbb{C}^n$ is $\hat{K}=\{z\in \mathbb{C}^n: \Phi_K(z)\leq 1\}$. Then Proposition \ref{polyhull} and Theorem \ref{plurigreenandextgeneral} immediately yield the following
\begin{theorem}\label{polyhullmain}
	If $K$ is a $\lambda$-circular compact set in $\mathbb{C}^n$, then
	\[
    \hat{K}=\hat{K}_{\lambda}=\{z\in \mathbb{C}^n: \Psi_{K,\lambda}(z)\leq 1\}\ni 0.
	\]
\end{theorem}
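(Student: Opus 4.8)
The plan is to read the statement off from three facts already established: the identity $\hat{K}=\{z\in\mathbb{C}^n:\Phi_K(z)\le 1\}$ recorded just above the theorem (Theorem 3.6 of \cite{Siciak82}); the sandwich estimate $\textup{max}\,(1,\Psi_{K,\lambda})^{\textup{min}(\lambda)}\le\Phi_K\le\textup{max}\,(1,\Psi_{K,\lambda})^{\textup{max}(\lambda)}$ of Theorem \ref{plurigreenandextgeneral}, which applies here since a compact set is bounded and, by hypothesis, $\lambda$-circular; and the chain $\hat{K}_h=\hat{K}=\hat{K}_{\lambda}$ of Proposition \ref{polyhull}, applied with $\Omega=\mathbb{C}^n$, which is a $\lambda$-balanced domain containing $K$.

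First I would identify the two sublevel sets $\{z:\Phi_K(z)\le 1\}$ and $\{z:\Psi_{K,\lambda}(z)\le 1\}$. Since the constant function $0$ lies in $\mathcal{L}_n$ and vanishes on $K$, we have $V_K\ge 0$, hence $\Phi_K\ge 1$ on $\mathbb{C}^n$; likewise $\textup{max}\,(1,\Psi_{K,\lambda})\ge 1$. Because $\textup{min}(\lambda)>0$ and $\textup{max}(\lambda)>0$, the sandwich estimate then gives, for each $z$: if $\Phi_K(z)\le 1$ then $\textup{max}\,(1,\Psi_{K,\lambda}(z))^{\textup{min}(\lambda)}\le 1$, which forces $\Psi_{K,\lambda}(z)\le 1$; conversely, if $\Psi_{K,\lambda}(z)\le 1$ then $\textup{max}\,(1,\Psi_{K,\lambda}(z))=1$, so $\Phi_K(z)\le 1$. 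Thus $\hat{K}=\{z\in\mathbb{C}^n:\Phi_K(z)\le 1\}=\{z\in\mathbb{C}^n:\Psi_{K,\lambda}(z)\le 1\}$, and $\hat{K}_{\lambda}=\hat{K}$ by Proposition \ref{polyhull}.

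It remains to verify $0\in\hat{K}$, i.e. $\Psi_{K,\lambda}(0)\le 1$. For every $u\in H_{\lambda}$ one has $\Phi^X(0,t)=0$ for all $t\in\mathbb{C}$, so the homogeneity relation in Definition \ref{definitionofextrftn} yields $u(0)=u(\Phi^X(0,t))=e^{-\textup{Re}\,t}\,u(0)$; choosing $t$ with $\textup{Re}\,t=1$ gives $u(0)=0$. As $H_{\lambda}$ is nonempty (for instance $|z_1|^{1/\lambda_1}\in H_{\lambda}$) and every competitor in the definition of $\Psi_{K,\lambda}$ vanishes at the origin, $\Psi_{K,\lambda}(0)=0\le 1$, so $0\in\hat{K}$. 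I do not expect a genuine obstacle: the theorem is a formal consequence of Proposition \ref{polyhull} and Theorem \ref{plurigreenandextgeneral}, and the only point requiring a little attention is that the strict positivity of $\textup{min}(\lambda)$ and $\textup{max}(\lambda)$ is precisely what allows the condition $\Phi_K\le 1$ to pass to $\Psi_{K,\lambda}\le 1$ and back.
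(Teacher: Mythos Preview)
Your proposal is correct and follows essentially the same route as the paper: the paper simply states that the result follows ``immediately'' from Siciak's identity $\hat{K}=\{z:\Phi_K(z)\le 1\}$, Proposition \ref{polyhull}, and Theorem \ref{plurigreenandextgeneral}, and you have faithfully unpacked this into the sandwich argument and the verification that $0\in\hat K$. The only minor addition you supply beyond the paper is the explicit check that $u(0)=0$ for every $u\in H_\lambda$, which the paper leaves implicit.
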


Now we prove the main theorem of this section.
\begin{theorem}\label{characterizationofHppsets}
If $E$ is a nonempty subset of $\mathbb{C}^n$, then the following are equivalent.
\begin{enumerate}
\item $E$ is $\lambda$-pluripolar.
\item $\Psi^{\ast}_{E,\lambda}\equiv +\infty$.
\item $\rho_{\lambda}(E):=\textup{inf}\,\{\|u\|_{E}:u\in H_{\lambda},\, \|u\|_{S^{2n-1}}=1\}=0$.
\end{enumerate}
If $E$ is $\lambda$-circular, then any one of the statements above holds if, and only if, $E$ is pluripolar.
\end{theorem}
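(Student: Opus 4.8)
The plan is to obtain the three equivalences by assembling the structural results of this section: Theorem \ref{pluripolarchar1} will give $(1)\Leftrightarrow(2)$, while the product construction of Lemma \ref{product} together with the Hartogs lemma (Lemma \ref{Hartogslemma}) will give $(1)\Leftrightarrow(3)$; the final assertion about $\lambda$-circular sets will then follow at once from Theorem \ref{plurigreenandextgeneral}.

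For $(1)\Rightarrow(2)$ I would take $u\in H_\lambda$ with $u\equiv 0$ on $E$ and note that $cu$ is admissible in the supremum defining $\Psi_{E,\lambda}$ (and in each $\Psi_{F,\lambda}$ with $F\subset E$ bounded, in the unbounded case) for every $c>0$; letting $c\to\infty$ yields $\Psi_{E,\lambda}\equiv+\infty$ on $\{u>0\}$. Since $\log u\in\mathrm{PSH}(\mathbb{C}^n)$ is nonconstant by Remark \ref{remarklogofHlambdaftn}, the set $\{u=0\}$ is pluripolar, hence has empty interior, so $\{u>0\}$ is dense and $\Psi^{\ast}_{E,\lambda}\equiv+\infty$. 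For $(2)\Rightarrow(1)$ the observation to exploit is that $\Psi_{E,\lambda}\le 1$ on $E$, so for bounded $E$ one has $E\subset S:=\{\Psi_{E,\lambda}<+\infty\}$; applying Theorem \ref{pluripolarchar1} to the family $\{u\in H_\lambda:u\le 1\text{ on }E\}$ (nonempty, after rescaling a nonconstant element of $H_\lambda$), the hypothesis $\Psi^{\ast}_{E,\lambda}\equiv+\infty$ forces $S$, hence $E$, to be $\lambda$-pluripolar. The unbounded case reduces to this via $E=\bigcup_m (E\cap B^n(0;m))$: if some piece $E\cap B^n(0;m)$ were not $\lambda$-pluripolar, then by the bounded case its $\Psi^{\ast}$ would lie in $H_\lambda$ and dominate $\Psi^{\ast}_{E,\lambda}$ (Proposition \ref{increasingunion1}), a contradiction; so every piece is $\lambda$-pluripolar and so is $E$ by Proposition \ref{countablepp}.

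The implication $(1)\Rightarrow(3)$ is immediate: rescale a function of $H_\lambda$ vanishing on $E$ to have unit supremum over $S^{2n-1}$ (this supremum is finite and positive, being that of a nonconstant, $\lambda$-homogeneous, upper-semicontinuous function), so that its supremum over $E$ equals $0$ and $\rho_\lambda(E)=0$. For $(3)\Rightarrow(1)$, from $\rho_\lambda(E)=0$ I would pick $v_k\in H_\lambda$ with $\|v_k\|_{S^{2n-1}}=1$ and, say, $\|v_k\|_E\le e^{-2^k}$. The growth bound (\ref{Hlambdagrowth}) makes $\{v_k\}$ locally uniformly bounded from above and gives $\|v_k\|_{B^n}\le 1$; were $v_k\to 0$ pointwise, Lemma \ref{Hartogslemma} applied on $S^{2n-1}$ would contradict $\|v_k\|_{S^{2n-1}}=1$, so there exist $a\in\mathbb{C}^n$, $\delta\in(0,1]$, and indices $k_1<k_2<\cdots$ (hence $k_j\ge j$) with $v_{k_j}(a)\ge\delta$. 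By Lemma \ref{product}, $u:=\prod_{j\ge 1}v_{k_j}^{2^{-j}}$ is either $\equiv 0$ or a member of $H_\lambda$; since every partial product at $a$ is $\ge\delta$ it is positive there, so $u\in H_\lambda$; and for $z\in E$ one has $v_{k_j}(z)^{2^{-j}}\le e^{-2^{k_j-j}}\le e^{-1}$, whence $u(z)=0$. Thus $E$ is $\lambda$-pluripolar.

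I expect the genuinely delicate point to be this last step — arranging that the infinite product is not identically zero (so that it certifies $\lambda$-pluripolarity) while still vanishing on $E$ — but it is a direct adaptation of the argument already used for Proposition \ref{countablepp}. The remaining assertion is then short: $(1)$ always implies that $E$ is pluripolar by Remark \ref{remarklogofHlambdaftn}, and when $E$ is $\lambda$-circular the converse is exactly the last statement of Theorem \ref{plurigreenandextgeneral}, which equates pluripolarity of a $\lambda$-circular set with $\Psi^{\ast}_{E,\lambda}\equiv+\infty$, i.e. with $(2)$.
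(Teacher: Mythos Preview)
Your proof is correct; all four implications go through as you describe. The $(3)\Rightarrow(1)$ argument is essentially identical to the paper's, and your $(1)\Rightarrow(2)$ is the paper's one-line inequality $m\cdot u\le\Psi^{\ast}_{E,\lambda}$ spelled out in more detail.

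The genuine difference is in how you close the cycle. The paper argues $(2)\Rightarrow(3)\Rightarrow(1)$, and in the course of proving $(2)\Rightarrow(3)$ establishes the explicit formula
\[
\rho_{\lambda}(E)=\frac{1}{\|\Psi_{E,\lambda}\|_{S^{2n-1}}}=\frac{1}{\|\Psi^{\ast}_{E,\lambda}\|_{S^{2n-1}}}
\]
for bounded $E$ (equation~(\ref{capacityandextr})). You instead prove $(2)\Rightarrow(1)$ directly by invoking Theorem~\ref{pluripolarchar1} on the defining family for $\Psi_{E,\lambda}$, and dispose of $(1)\Rightarrow(3)$ by a trivial rescaling. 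This is cleaner if the sole goal is the equivalence, but it bypasses the formula relating $\rho_\lambda$ to $\Psi^{\ast}_{E,\lambda}$, which the paper relies on later (in the proof of Theorem~\ref{normalsuspension}, to compute $\rho_\lambda(B^n)=1$ and to derive the set inclusion~(\ref{set inclusion})). So your route is a legitimate and slightly more economical proof of the theorem as stated, while the paper's detour earns an identity that does real work downstream.
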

\begin{proof}
$(3)\Longrightarrow(1)$: Suppose first that $\rho_{\lambda}(E)=0$. Then there exists a sequence $\{u_m\}\subset H_{\lambda}$ such that $\|u_m\|_{S^{2n-1}}=1$ for each $m\geq 1$, and $\lim_{m\to \infty}\|u_m\|_{E}=0$.
Then by Lemma \ref{Hartogslemma}, there exists a point $a\in \mathbb{C}^n$ and a subsequence $\{u_{n_m}\}$ of $\{u_m\}$ such that $u_{n_m}(a)>1/2$ for each $m\geq 1$. Choose a subsequence $\{u_{n_{j_m}}\}$ of $\{u_{n_m}\}$ such that $\|u_{n_{j_m}}\|^{2^{-m}}_{E}\leq \frac{1}{2}~\text{for each}~m\geq 1.$
Then
\[
u:=\prod_{m=1}^{\infty}(u_{n_{j_m}})^{2^{-m}}\in H_{\lambda}
\]
by Lemma \ref{product}, and $u\equiv 0$ on $E$. Therefore, $E$ is $\lambda$-pluripolar.

 $(2)\Longrightarrow(3)$: We first show that 
 \begin{equation}\label{capacityandextr}
  \rho_{\lambda}(E)=\frac{1}{\|\Psi_{E,\lambda}\|_{S^{2n-1}}}=\frac{1}{\|\Psi^{\ast}_{E,\lambda}\|_{S^{2n-1}}}
 \end{equation}
 for any bounded set $E\subset \mathbb{C}^n$. Choose $u\in H_{\lambda}$ with $\|u\|_{S^{2n-1}}=1$. Then $u\leq \|u\|_{E}\cdot \Psi_{E,\lambda}$ on $\mathbb{C}^n$ so that
 \[
 1=\|u\|_{S^{2n-1}}\leq \|u\|_{E}\cdot \|\Psi_{E,\lambda}\|_{S^{2n-1}}.
 \]
 This implies that $ \rho_{\lambda}(E)\geq \frac{1}{\|\Psi_{E,\lambda}\|_{S^{2n-1}}}$. Fix a positive number $m<\|\Psi_{E,\lambda}\|_{S^{2n-1}}$ and choose $u\in H_{\lambda}$ such that $\|u\|_{E}\leq 1$, $\|u\|_{S^{2n-1}}>m$. Then $v:=\frac{u}{\|u\|_{S^{2n-1}}}\in H_{\lambda}$ and  $\|v\|_{S^{2n-1}}=1$. Therefore,
 \[
 \rho_{\lambda}(E)\leq \|v\|_{E}=\frac{1}{\|u\|_{S^{2n-1}}}<\frac{1}{m}
 \]
 so that  $ \rho_{\lambda}(E)\leq \frac{1}{\|\Psi_{E,\lambda}\|_{S^{2n-1}}}$. Now the formula
  \[
  \rho_{\lambda}(E)=\frac{1}{\|\Psi_{E,\lambda}\|_{S^{2n-1}}}\geq \frac{1}{\|\Psi^{\ast}_{E,\lambda}\|_{S^{2n-1}}}
 \]
 is obvious and (\ref{Hlambdagrowth}) implies (\ref{capacityandextr}). This proves the claim when $E$ is bounded. If $E$ is unbounded, then let $E_m=E\cap B^n(0;m)$ for each $m\geq 1$. The given assumption implies that $+\infty\equiv \Psi^{\ast}_{E,\lambda}\leq \Psi^{\ast}_{E_m,\lambda} $ so that $\rho_{\lambda}(E_m)=0$ for each $m$ by (\ref{capacityandextr}). Then by the implication $(3)\Longrightarrow(1)$, each $E_m$ is $\lambda$-pluripolar. Therefore, $E$ is $\lambda$-pluripolar by Proposition \ref{countablepp}. Hence $\rho_{\lambda}(E)=0$.
 
 $(1)\Longrightarrow(2)$: Since $E$ is assumed to be $\lambda$-pluripolar, there is a function $u\in H_{\lambda}$ such that $u\equiv 0$ on $E$. Then the conclusion follows from the inequality $m\cdot u\leq \Psi^{\ast}_{E,\lambda}$ on $\mathbb{C}^n$ for each $m>0$.

 Note that the last statement of the theorem follows from Theorem \ref{plurigreenandextgeneral}.
\end{proof}

Finally, we prove that the function $E\to \Psi^{\ast}_{E,\lambda}$ is continuous from above (Theorem \ref{increasingunion2}). The continuity will be important in the proof of Theorem \ref{normalsuspension}.
\begin{definition}
\normalfont
A property is said to hold $H_{\lambda}$-$\textit{almost everywhere}$ or $H_{\lambda}$-$\textit{a.e.}$ on a set $E\subset \mathbb{C}^n$ if it holds on $E-A$ for some $\lambda$-pluripolar set $A\subset \mathbb{C}^n$. 
\end{definition}
\begin{lemma}\label{Hlambdaa.e.property}
Let $\mathcal{F}\subset H_{\lambda}$ be a given family and define $u:=\textup{sup}\,\{v:v\in \mathcal{F}\}$. Then $u\equiv u^{\ast}$ $H_{\lambda}$-a.e. on $\mathbb{C}^n$
\end{lemma}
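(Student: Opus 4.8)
The plan is to reduce the statement, via the results already established in this section, to the classical negligibility theorem for upper envelopes of plurisubharmonic functions. Two structural remarks are used throughout. First, since $v\ge 0$ for every $v\in\mathcal{F}$ and $u^{\ast}\ge u$ by definition of the regularization, we have $0\le u\le u^{\ast}$ on $\mathbb{C}^n$. Second, because each flow map $\Phi^{X}(\cdot,s)$ is a biholomorphism of $\mathbb{C}^n$ and every $v\in\mathcal{F}$ satisfies $v(\Phi^{X}(z,s))=e^{-\textup{Re}\,s}v(z)$, taking the supremum over $\mathcal{F}$ gives $u(\Phi^{X}(z,s))=e^{-\textup{Re}\,s}u(z)$, and passing to the upper limit as $w\to z$ yields the same identity for $u^{\ast}$. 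Hence both $u$ and $u^{\ast}$ are $\lambda$-homogeneous, and consequently the set $N:=\{z\in\mathbb{C}^n:u(z)<u^{\ast}(z)\}$ is $\lambda$-circular: if $u(z)<u^{\ast}(z)$, then multiplying by $e^{-\textup{Re}\,s}$ shows $\Phi^{X}(z,s)\in N$ whenever $\textup{Re}\,s=0$.

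I would then split into two cases according to whether $\|u\|_{B^n}$ is finite. If $\|u\|_{B^n}=+\infty$, then Theorem~\ref{pluripolarchar1} (its five conditions being equivalent, with (1) failing) forces the failure of (5); that is, $S:=\{z:u(z)<+\infty\}$ is $\lambda$-pluripolar. Since $u=+\infty$ implies $u^{\ast}=+\infty$, we get $N\subset S$, and the lemma follows in this case. If instead $\|u\|_{B^n}<+\infty$, then applying (\ref{Hlambdagrowth}) to each $v\in\mathcal{F}$ (using $\|v\|_{B^n}\le\|u\|_{B^n}$) shows $u$ is locally bounded above on $\mathbb{C}^n$, and Theorem~\ref{pluripolarchar1} gives $u^{\ast}\in H_{\lambda}$; in particular $u^{\ast}$ is finite-valued, so $N$ really is $\lambda$-circular. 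Here I would invoke the negligibility theorem of Bedford and Taylor (see also \cite{Siciak82}): for any family of plurisubharmonic functions locally bounded above on a domain, the set on which the envelope differs from its regularization is pluripolar. Thus $N$ is pluripolar; being $\lambda$-circular, it is then $\lambda$-pluripolar by the last assertion of Theorem~\ref{characterizationofHppsets}. In both cases $u\equiv u^{\ast}$ off a $\lambda$-pluripolar set, as claimed.

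The step I expect to be the main obstacle, should one want a self-contained argument, is precisely the negligibility theorem used in the second case; everything else is a routine application of Theorem~\ref{pluripolarchar1}, Theorem~\ref{characterizationofHppsets} and (\ref{Hlambdagrowth}). A self-contained route would first apply Choquet's lemma to replace $\mathcal{F}$ by a countable subfamily with the same regularization, pass to an increasing sequence $u_j\uparrow u$, and then show that each set $N_{\epsilon}:=\{z:u^{\ast}(z)>0,\ u(z)\le(1-\epsilon)u^{\ast}(z)\}$ is $\lambda$-pluripolar; since $N\subset\bigcup_{k\ge 1}N_{1/k}$, Proposition~\ref{countablepp} would then conclude the proof. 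Proving $\lambda$-pluripolarity of $N_{\epsilon}$, however, requires a genuine quantitative (capacity-type) estimate on the $u_j$ rather than the soft tools of this subsection, so routing through the Bedford--Taylor theorem is the efficient choice.
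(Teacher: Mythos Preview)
Your proof is correct and follows essentially the same route as the paper's: both split into two cases via Theorem~\ref{pluripolarchar1} (you phrase the dichotomy as $\|u\|_{B^n}<+\infty$ versus $=+\infty$, the paper as whether $\{u<+\infty\}$ is $\lambda$-pluripolar, but these are equivalent by that theorem), handle the degenerate case by the inclusion $N\subset\{u<+\infty\}$, and in the main case invoke the Bedford--Taylor negligibility theorem together with the $\lambda$-circularity of $N$ and the last assertion of Theorem~\ref{characterizationofHppsets}. The paper's proof is more terse---it asserts $\lambda$-circularity of $\mathcal{N}$ without the explicit verification you supply---but the structure is identical.
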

\begin{proof}
It suffices to show that $\mathcal{N}:=\{z\in \mathbb{C}^n:u(z)<u^{\ast}(z)\}$ is $\lambda$-pluripolar. If $A:=\{z\in \mathbb{C}^n:u(z)<+\infty\}$ is $\lambda$-pluripolar, then $u^{\ast}\equiv +\infty$ by Theorem \ref{pluripolarchar1}. So $\mathcal{N}=A$ is pluripolar. Note also that  $u^{\ast}\in H_{\lambda}$ if $A$ is not $\lambda$-pluripolar. Then by Theorem 7.1 of \cite{BedfordandTaylor82}, $\mathcal{N}$ is a $\lambda$-circular pluripolar set. Therefore, we conclude from Theorem \ref{characterizationofHppsets} that $\mathcal{N}$ is $\lambda$-pluripolar. 
\end{proof}
\begin{lemma}\label{Hlambdaae}
If $E$ is a subset of $\mathbb{C}^n$, then $\Psi^{\ast}_{E,\lambda}=\Psi_{E,\lambda}$ $H_{\lambda}$-a.e. on $\mathbb{C}^n$ and $\Psi^{\ast}_{E,\lambda}\leq 1$ $H_{\lambda}$-a.e. on $E$.
\end{lemma}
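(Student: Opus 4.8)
The plan is to derive both assertions from Lemma~\ref{Hlambdaa.e.property}, which already tells us that the supremum of a family in $H_\lambda$ agrees with its upper-semicontinuous regularization off a $\lambda$-pluripolar set, combined with Proposition~\ref{countablepp} (countable unions of $\lambda$-pluripolar sets are $\lambda$-pluripolar) and Proposition~\ref{increasingunion1} (for $E_m := E\cap B^n(0;m)$ one has $\Psi_{E_m,\lambda}\downarrow\Psi_{E,\lambda}$).

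First I would handle bounded $E$. Here $\Psi_{E,\lambda}$ is, by Definition~\ref{definitionofextrftn}, exactly the supremum of the family $\mathcal{F}_E:=\{v\in H_\lambda: v\le 1 \text{ on } E\}\subset H_\lambda$, so applying Lemma~\ref{Hlambdaa.e.property} to $\mathcal{F}_E$ yields $\Psi_{E,\lambda}=\Psi^{\ast}_{E,\lambda}$ $H_\lambda$-a.e.\ on $\mathbb{C}^n$. For unbounded $E$ I would pass to the exhaustion $\{E_m\}$: by the bounded case there are $\lambda$-pluripolar sets $A_m$ with $\Psi_{E_m,\lambda}=\Psi^{\ast}_{E_m,\lambda}$ off $A_m$, and $A:=\bigcup_m A_m$ is $\lambda$-pluripolar by Proposition~\ref{countablepp}. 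The key point is that $\varphi:=\lim_m\Psi^{\ast}_{E_m,\lambda}$ is a \emph{decreasing} limit of upper-semicontinuous functions (the sequence decreases since $E_m\subset E_{m+1}$ shrinks the admissible families), hence $\varphi$ is upper-semicontinuous; since $\varphi\ge\Psi_{E_m,\lambda}\ge\Psi_{E,\lambda}$ for every $m$ and $\Psi^{\ast}_{E,\lambda}$ is the least upper-semicontinuous majorant of $\Psi_{E,\lambda}$, we get $\Psi^{\ast}_{E,\lambda}\le\varphi$ on all of $\mathbb{C}^n$, while off $A$ Proposition~\ref{increasingunion1} gives $\varphi=\lim_m\Psi_{E_m,\lambda}=\Psi_{E,\lambda}$. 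Then the chain $\Psi_{E,\lambda}\le\Psi^{\ast}_{E,\lambda}\le\varphi=\Psi_{E,\lambda}$ collapses to equality off $A$, which is the first assertion.

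The second assertion then comes for free from the same exceptional set: one has $\Psi_{E,\lambda}\le 1$ on all of $E$ — directly from the definition when $E$ is bounded, and from $\Psi_{E,\lambda}(z)\le\Psi_{\{z\},\lambda}(z)\le 1$ at each $z\in E$ in the unbounded case — so on $E\setminus A$ one gets $\Psi^{\ast}_{E,\lambda}=\Psi_{E,\lambda}\le 1$. I expect the only genuine obstacle to be the bookkeeping in the unbounded case: one must resist commuting $(\cdot)^{\ast}$ with $\inf_m$ (false in general) and instead use that a monotone decreasing limit of upper-semicontinuous functions remains upper-semicontinuous, which is what makes the squeeze argument go through.
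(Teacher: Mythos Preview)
Your argument is correct and follows the same route as the paper: Lemma~\ref{Hlambdaa.e.property} handles bounded $E$, and the unbounded case is reduced to it via the exhaustion $E_m=E\cap B^n(0;m)$ together with Propositions~\ref{increasingunion1} and~\ref{countablepp}. One small slip: the inequality $\varphi\ge\Psi_{E_m,\lambda}$ you wrote is not justified (and need not hold for small $m$); the correct line is $\Psi^{\ast}_{E_m,\lambda}\ge\Psi_{E_m,\lambda}\ge\Psi_{E,\lambda}$ for each $m$, whence $\varphi=\lim_m\Psi^{\ast}_{E_m,\lambda}\ge\Psi_{E,\lambda}$, and your squeeze then goes through. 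Your use of mere upper-semicontinuity of $\varphi$ and of the direct bound $\Psi_{E,\lambda}\le 1$ on $E$ is in fact a mild streamlining of the paper's version, which instead assumes without loss of generality that $E$ is not $\lambda$-pluripolar (so that $\varphi,\Psi^{\ast}_{E,\lambda}\in H_\lambda$) and deduces $\varphi=\Psi^{\ast}_{E,\lambda}$ \emph{everywhere} from Lebesgue-a.e.\ equality of plurisubharmonic functions; you avoid both the WLOG reduction and the measure-theoretic step.
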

\begin{proof}
The statement follows from Lemma $\ref{Hlambdaa.e.property}$ if $E$ is bounded. Suppose that $E$ is unbounded and let $E_m:=E\cap B^n(0;m)$ for each positive integer $m$. Then by Lemma \ref{Hlambdaa.e.property}, we have
\begin{equation}\label{equation}
\Psi^{\ast}_{E_m,\lambda}=\Psi_{E_m,\lambda}~\text{on}~\mathbb{C}^n-A_m,\,\text{and}~\Psi^{\ast}_{E_m,\lambda}\leq 1~\text{on}~E-A_m
\end{equation}
where $A_m=\{z\in \mathbb{C}^n:u_m(z)=0\}$ for some $u_m\in H_{\lambda}$. Without loss of generality, we may assume that $E$ and each $E_m$ are not $\lambda$-pluripolar. Then it follows from Theorem $\ref{pluripolarchar1}$ that $\Psi^{\ast}_{E,\lambda},\,\Psi^{\ast}_{E_m,\lambda}\in H_{\lambda}$. Since $\{\Psi^{\ast}_{E_m,\lambda}\}$ is a decreasing sequence of plurisubharmonic functions, $\varphi:=\lim_{m\to \infty}\Psi^{\ast}_{E_m,\lambda}$ is plurisubharmonic. By Proposition \ref{increasingunion1} and (\ref{equation}), we have $\varphi=\Psi^{\ast}_{E,\lambda}$ on $\mathbb{C}^n-A$, where $A:=\bigcup_{m=1}^{\infty}A_m.$ Then note that $A$ is $\lambda$-pluripolar by Proposition \ref{countablepp} and therefore it is of $2n$-dimensional Lebesgue measure zero. So $\varphi=\Psi^{\ast}_{E,\lambda}$ on $\mathbb{C}^n$ as $\varphi,\,\Psi^{\ast}_{E,\lambda}\in H_{\lambda}$. Now we obtain the desired conclusion from (\ref{equation}).
\end{proof}

\begin{theorem}\label{extremala.e.}
If $E$ is a subset of $\mathbb{C}^n$, then
\[
\Psi^{\ast}_{E,\lambda}=\textup{sup}\,\{u:u\in H_{\lambda},\, u\leq 1\; H_{\lambda}\text{-a.e.}\, \text{on}\; E\}~\text{on}~\mathbb{C}^n.
\]
\end{theorem}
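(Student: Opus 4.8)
Denote by $\Psi^{\circ}_{E,\lambda}$ the function on the right-hand side. The plan is to establish the two inequalities $\Psi^{\circ}_{E,\lambda}\ge\Psi^{\ast}_{E,\lambda}$ and $\Psi^{\circ}_{E,\lambda}\le\Psi^{\ast}_{E,\lambda}$ separately. We may assume $E$ is not $\lambda$-pluripolar, since otherwise $\Psi^{\ast}_{E,\lambda}\equiv+\infty$ by Theorem \ref{characterizationofHppsets} while the supremum defining $\Psi^{\circ}_{E,\lambda}$ runs over all of $H_{\lambda}$ and is therefore $+\infty$ on $\mathbb{C}^n\setminus\{0\}$, so both sides agree there. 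Granting that $E$ is not $\lambda$-pluripolar, Theorem \ref{pluripolarchar1} yields $\Psi^{\ast}_{E,\lambda}\in H_{\lambda}$, and Lemma \ref{Hlambdaae} shows that $\Psi^{\ast}_{E,\lambda}\le 1$ holds $H_{\lambda}$-a.e. on $E$; hence $\Psi^{\ast}_{E,\lambda}$ is itself one of the functions over which the supremum is taken, and $\Psi^{\circ}_{E,\lambda}\ge\Psi^{\ast}_{E,\lambda}$ is immediate.

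For the reverse inequality it suffices to prove that every competitor lies below $\Psi^{\ast}_{E,\lambda}$; that is, if $u\in H_{\lambda}$ satisfies $u\le 1$ on $E\setminus A$ for some $\lambda$-pluripolar set $A$, then $u\le\Psi^{\ast}_{E,\lambda}$ on $\mathbb{C}^n$. First assume $E$ is bounded, say $E\subset B^n(0;R)$. Choose $w\in H_{\lambda}$ with $w\equiv 0$ on $A$; by the growth estimate (\ref{Hlambdagrowth}), $w\le M:=\|w\|_{S^{2n-1}}\cdot\textup{max}\,(1,R^{\frac{1}{\textup{min}(\lambda)}})$ on $E$, and we may take $M\ge 1$. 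For $\epsilon\in(0,1)$ set
\[
v_{\epsilon}:=u^{1-\epsilon}\Big(\tfrac{w}{M}\Big)^{\epsilon}=\exp\Big((1-\epsilon)\log u+\epsilon\log\tfrac{w}{M}\Big)\in H_{\lambda},
\]
the membership in $H_{\lambda}$ coming from Remark \ref{remarklogofHlambdaftn}. On $E\cap A$ we have $w=0$, so $v_{\epsilon}=0$; on $E\setminus A$ we have $u\le 1$ and $w/M\le 1$, so $v_{\epsilon}\le 1$. Thus $v_{\epsilon}\le 1$ on $E$, whence $v_{\epsilon}\le\Psi_{E,\lambda}\le\Psi^{\ast}_{E,\lambda}$ on $\mathbb{C}^n$. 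Letting $\epsilon\to 0^{+}$ at a point $z$ with $w(z)>0$ gives $u(z)\le\Psi^{\ast}_{E,\lambda}(z)$, and where $u(z)=0$ this is trivial; hence $u\le\Psi^{\ast}_{E,\lambda}$ off the set $\{w=0\}$, which is pluripolar and in particular Lebesgue-null. Since $u$ and $\Psi^{\ast}_{E,\lambda}$ are both plurisubharmonic on $\mathbb{C}^n$ and $u\le\Psi^{\ast}_{E,\lambda}$ almost everywhere, the sub-mean-value property forces $u\le\Psi^{\ast}_{E,\lambda}$ everywhere.

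For unbounded $E$, apply the bounded case to $E_m:=E\cap B^n(0;m)$, which is not $\lambda$-pluripolar for $m$ large (Proposition \ref{countablepp}), with the same exceptional set $A$, obtaining $u\le\Psi^{\ast}_{E_m,\lambda}$ for all large $m$; since $\{\Psi^{\ast}_{E_m,\lambda}\}$ decreases to $\Psi^{\ast}_{E,\lambda}$ (exactly as in the proof of Lemma \ref{Hlambdaae}, via Proposition \ref{increasingunion1} and Lemma \ref{Hartogslemma}), we conclude $u\le\Psi^{\ast}_{E,\lambda}$, completing the proof of $\Psi^{\circ}_{E,\lambda}\le\Psi^{\ast}_{E,\lambda}$. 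I expect the main obstacle to be precisely the absorption of the $\lambda$-pluripolar exceptional set $A$ in the bounded case: arranging for the perturbed function $v_{\epsilon}$ to simultaneously belong to $H_{\lambda}$ and be bounded by $1$ on all of $E$ uses the logarithmic-convexity structure of $H_{\lambda}$ from Remark \ref{remarklogofHlambdaftn}, the a priori growth bound (\ref{Hlambdagrowth}) (which is what makes $w$ bounded on the bounded set $E$), and the elementary fact that two plurisubharmonic functions coinciding off a set of Lebesgue measure zero coincide identically.
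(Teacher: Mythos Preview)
Your proof is correct and follows essentially the same approach as the paper: both directions use Lemma~\ref{Hlambdaae} and Theorem~\ref{pluripolarchar1} for $\Psi^{\ast}_{E,\lambda}\le\Psi^{\circ}_{E,\lambda}$, and both absorb the $\lambda$-pluripolar exceptional set via the interpolation $u^{1-\epsilon}v^{\epsilon}\in H_{\lambda}$ from Remark~\ref{remarklogofHlambdaftn} for the reverse inequality. The only organizational difference is that the paper handles bounded and unbounded $E$ in one stroke---it works with an arbitrary bounded $F\subset E$, obtains $u^{1-\epsilon}v^{\epsilon}\le\|v\|_{F}^{\epsilon}\,\Psi_{F,\lambda}$, lets $\epsilon\to 0$, and then takes the infimum over $F$ directly from the definition of $\Psi_{E,\lambda}$---whereas you normalize $w/M$ (which requires $E$ bounded) and then pass to unbounded $E$ via $E_m$ and the convergence $\Psi^{\ast}_{E_m,\lambda}\downarrow\Psi^{\ast}_{E,\lambda}$ established in Lemma~\ref{Hlambdaae}.
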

\begin{proof}
Denote by $\varphi$ the function on the right-hand side of the equation above. Choose a function $u\in H_{\lambda}$ satisfying $u\leq 1$ on $E-A$ with $A=\{z\in \mathbb{C}^n: v(z)=0\},\,v\in H_{\lambda}$. Then by Remark \ref{remarklogofHlambdaftn}, $u^{1-\epsilon}\cdot v^{\epsilon}\in H_{\lambda}$ for each $\epsilon\in (0,1)$. Note that, for any bounded subset $F$ of $E$, we have
\begin{equation}\label{equation2}
u^{1-\epsilon}\cdot v^{\epsilon}\leq \|u^{1-\epsilon}\cdot v^{\epsilon}\|_{F}\cdot \Psi_{F,\lambda} \leq\|v\|^{\epsilon}_{F}\cdot \Psi_{F,\lambda}~\text{on}~\mathbb{C}^n.
\end{equation}
  Let $z\in \mathbb{C}^n$ be a point such that $v(z)\neq 0$. Then letting $\epsilon \to 0$ in (\ref{equation2}), we obtain $u(z)\leq \Psi_{E,\lambda}(z)$ from the definition of $\Psi_{E,\lambda}$. So $u\leq \Psi^{\ast}_{E,\lambda}$  and $\varphi\leq \Psi^{\ast}_{E,\lambda}$ on $\mathbb{C}^n$.

If $E$ is $\lambda$-pluripolar, then $\varphi \equiv \Psi^{\ast}_{E,\lambda}\equiv +\infty$ by the implication $(1)\Longrightarrow(2)$  in Theorem \ref{characterizationofHppsets}. Suppose that $E$ is not $\lambda$-pluripolar. Then it follows from Theorem \ref{pluripolarchar1} and Lemma \ref{Hlambdaae} that  $\Psi^{\ast}_{E,\lambda}\in H_{\lambda}$ and $\Psi^{\ast}_{E,\lambda}\leq 1$ $H_{\lambda}$-a.e.\;on $E$, respectively. Therefore, we have $\Psi^{\ast}_{E,\lambda}\leq \varphi$ on $\mathbb{C}^n$.
\end{proof}
\begin{theorem}\label{increasingunion2}
If $E\subset \mathbb{C}^n$ is an increasing union of the sequence $\{E_m\}$, then
\[
\lim_{m\to \infty}\Psi^{\ast}_{E_m,\lambda}(z)=\Psi^{\ast}_{E,\lambda}(z)~\text{for any}~ z\in \mathbb{C}^n.
\]
\end{theorem}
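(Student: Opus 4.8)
\medskip

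The plan is to get one inequality for free from monotonicity and then squeeze out the reverse inequality using the $H_\lambda$-almost-everywhere description of the regularized extremal function established in Theorem \ref{extremala.e.}.

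First I would record the easy half. Since a larger set admits fewer competitors $u\in H_\lambda$ in the definition of $\Psi_{\cdot,\lambda}$, the inclusions $E_m\subset E_{m+1}\subset E$ give $\Psi_{E,\lambda}\le\Psi_{E_{m+1},\lambda}\le\Psi_{E_m,\lambda}$ on $\mathbb{C}^n$ (the monotonicity noted after Proposition \ref{increasingunion1}), and these inequalities persist after upper-semicontinuous regularization. Hence $\psi:=\lim_{m\to\infty}\Psi^{\ast}_{E_m,\lambda}$ exists as a decreasing limit and $\psi\ge\Psi^{\ast}_{E,\lambda}$ everywhere; everything then reduces to proving $\psi\le\Psi^{\ast}_{E,\lambda}$.

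Next I would dispose of the trivial case: if $E$ is $\lambda$-pluripolar, then so is each $E_m\subset E$, and Theorem \ref{characterizationofHppsets} forces $\Psi^{\ast}_{E,\lambda}\equiv\Psi^{\ast}_{E_m,\lambda}\equiv+\infty$, so both sides coincide. So assume $E$ is not $\lambda$-pluripolar. By Proposition \ref{countablepp} the sets $E_m$ cannot all be $\lambda$-pluripolar, so fix $m_0$ with $E_{m_0}$, and hence every $E_m$ with $m\ge m_0$, not $\lambda$-pluripolar; Theorem \ref{pluripolarchar1} then yields $\Psi^{\ast}_{E_m,\lambda}\in H_\lambda$ for all $m\ge m_0$. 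Thus $\psi$ is a decreasing limit of functions in $H_\lambda$: it is plurisubharmonic and nonnegative, it inherits the scaling identity $\psi(\Phi^X(z,t))=e^{-\textup{Re}\,t}\,\psi(z)$, and it is nonconstant because it dominates the nonconstant function $\Psi^{\ast}_{E,\lambda}$; therefore $\psi\in H_\lambda$.

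Finally I would feed $\psi$ into Theorem \ref{extremala.e.}. By Lemma \ref{Hlambdaae}, for every $m$ there is a $\lambda$-pluripolar set $A_m$ with $\Psi^{\ast}_{E_m,\lambda}\le 1$ on $E_m\setminus A_m$; set $A:=\bigcup_m A_m$, which is $\lambda$-pluripolar by Proposition \ref{countablepp}. Given $z\in E\setminus A$, choose $m_1$ with $z\in E_m$ for all $m\ge m_1$; since $z\notin A_m$ we get $\Psi^{\ast}_{E_m,\lambda}(z)\le 1$ for all such $m$, and letting $m\to\infty$ gives $\psi(z)\le 1$. Hence $\psi\le 1$ holds $H_\lambda$-a.e.\ on $E$, so Theorem \ref{extremala.e.} gives $\psi\le\Psi^{\ast}_{E,\lambda}$, which together with the opposite inequality completes the proof. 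I expect the only delicate point to be the membership $\psi\in H_\lambda$: this is precisely why the pluripolar dichotomy of Theorem \ref{characterizationofHppsets} (equivalently Theorem \ref{pluripolarchar1}) must be settled first, so that the tail of the sequence consists of genuine $H_\lambda$ functions before one passes to the decreasing limit; the bookkeeping with the countable union of negligible sets $A_m$ is then routine.
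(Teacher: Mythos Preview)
Your proof is correct and follows essentially the same route as the paper: split into the $\lambda$-pluripolar and non-$\lambda$-pluripolar cases via Theorem \ref{characterizationofHppsets} and Proposition \ref{countablepp}, observe that the decreasing limit $\psi$ lies in $H_\lambda$ once the tail consists of $H_\lambda$ functions, use Lemma \ref{Hlambdaae} (together with the countable union of the exceptional sets) to get $\psi\le 1$ $H_\lambda$-a.e.\ on $E$, and conclude with Theorem \ref{extremala.e.}. The paper's argument is essentially a terse version of yours; your explicit justification that $\psi\in H_\lambda$ and your bookkeeping with the sets $A_m$ are exactly the details the paper leaves implicit.
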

\begin{proof}
If $E$ is $\lambda$-pluripolar, then we have $\Psi^{\ast}_{E_m,\lambda}=\Psi^{\ast}_{E,\lambda}\equiv +\infty$ for each $m\geq 1$ by Theorem \ref{characterizationofHppsets}. If $E$ is not $\lambda$-pluripolar, then $E_{m_0}$ is not $\lambda$-pluripolar for some $m_0\geq 1$ by Proposition \ref{countablepp}. Note also that $\Psi^{\ast}_{E_m,\lambda}\in H_{\lambda}$ if $m\geq m_0$ by Theorem \ref{pluripolarchar1} and Theorem \ref{characterizationofHppsets}. So $\varphi:=\lim_{m\to \infty}\Psi^{\ast}_{E_m,\lambda}\in H_{\lambda}$ and $\varphi\geq \Psi^{\ast}_{E,\lambda}$ on $\mathbb{C}^n$. Then by Lemma \ref{Hlambdaae}, $\varphi\leq 1$ $H_{\lambda}$-a.e. on $E$. Finally, we conclude from Theorem \ref{extremala.e.} that $\varphi\leq \Psi^{\ast}_{E,\lambda}$ on $\mathbb{C}^n$.
\end{proof}
\section{Normal suspensions}\label{Section normal}
\begin{definition}\label{Lregular}
\normalfont
A set $E\subset \mathbb{C}^n$ is $\textit{L-regular}$ at $a\in \bar{E}$ if $V^{\ast}_{E}(a)=0$. $E$ is said to be $\textit{locally L-regular}$ at $a\in \bar{E}$ if $E\cap B^n(a;r)$ is $L$-regular for each $r>0$. A set $E$ is $\textit{locally pluripolar}$ if, for each $z\in E$, there is an open neighborhood $U\subset \mathbb{C}^n$ of $z$ and a nonconstant function $u\in \textup{PSH}(U)$ such that $u\equiv -\infty$ on $E\cap U$.
\end{definition}

\begin{remark}\label{L-regular}
\normalfont
 It is known that a set is nonpluripolar if, and only if, it is locally $L$-regular at some point; if $E\subset \mathbb{C}^n$ is locally $L$-regular at $a\in \bar{E}$, then it is nonpluripolar as $V^{\ast}_{E}\equiv +\infty$ on $\mathbb{C}^n$ whenever $E$ is pluripolar. The converse follows from the fact that the set $\{z\in \bar{E}:E~\text{is not locally} ~L\text{-regular at}~z\}$ is always pluripolar. See p.186 of \cite{Klimek91}.  We also remark that $E$ is pluripolar if, and only if, $E$ is locally pluripolar by \cite{Josephson1978}.
\end{remark}

\begin{proposition}\label{regularnonpp}
	A suspension $S^X_0(F)$ has a regular leaf if, and only if, $S^X_0(F)$ is nonpluripolar.
\end{proposition}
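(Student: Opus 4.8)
The plan is to reduce the assertion, in two steps, to the classical dichotomy ``pluripolar versus locally $L$-regular'' (Remark \ref{L-regular}) applied to the $\lambda$-direction set $F'_\lambda$. \emph{Step 1: $S^X_0(F)$ is pluripolar iff $F$ is $\lambda$-pluripolar.} First, $S^X_0(F)$ is $\lambda$-circular: if $p=\Phi^X(v,\tau)$ with $v\in F$, $\tau\in\mathbb H$ and $\mathrm{Re}\,s=0$, then $\Phi^X(p,s)=\Phi^X(v,\tau+s)$ with $\tau+s\in\mathbb H$. Next, since every $u\in H_\lambda$ satisfies $u(\Phi^X(v,\tau))=e^{-\mathrm{Re}\,\tau}u(v)$, the condition ``$u\le 1$ on $S^X_0(F)$'' is the same as ``$u\le 1$ on $F$'' (let $\mathrm{Re}\,\tau\downarrow 0$), so $\Psi_{S^X_0(F),\lambda}=\Psi_{F,\lambda}$ on $\mathbb C^n$. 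By Theorem \ref{characterizationofHppsets} (using that the suspension is $\lambda$-circular),
\[
S^X_0(F)\ \text{pluripolar}\ \Longleftrightarrow\ \Psi^\ast_{S^X_0(F),\lambda}\equiv+\infty\ \Longleftrightarrow\ \Psi^\ast_{F,\lambda}\equiv+\infty\ \Longleftrightarrow\ F\ \lambda\text{-pluripolar}.
\]

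\emph{Step 2: $F$ is $\lambda$-pluripolar iff $F'_\lambda$ is pluripolar.} For $z_1\ne 0$ with $z_1\notin C_i$, put $\pi_i(z):=(z_2 z_1^{-\lambda_2},\dots,z_n z_1^{-\lambda_n})$ using $z_1^{\lambda_k}=\exp(\lambda_k\mathrm{Log}_i z_1)$; a direct check gives $\Phi^X\big((1,\pi_i(z)),-\mathrm{Log}_i z_1\big)=z$, hence for any $u\in H_\lambda$
\[
u(z)=e^{\mathrm{Re}(\mathrm{Log}_i z_1)}\,u(1,\pi_i(z))=|z_1|\,u(1,\pi_i(z)).
\]
If $F$ is $\lambda$-pluripolar, choose $u\in H_\lambda$ with $u\equiv 0$ on $F$; then $\psi:=\log u(1,\cdot)$ is plurisubharmonic on $\mathbb C^{n-1}$ (Remark \ref{remarklogofHlambdaftn}) and $\not\equiv-\infty$ (otherwise the displayed identity forces $u\equiv 0$ on $\{z_1\ne0\}$, hence everywhere), while the identity applied to $z\in F$ gives $\psi\equiv-\infty$ on $F'_{\lambda,1}\cup F'_{\lambda,2}=F'_\lambda$, so $F'_\lambda$ is pluripolar. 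Conversely, if $F'_\lambda$ is pluripolar, then the saturation $G:=\bigcup_{m\in\mathbb Z}\mathrm{diag}(e^{2\pi i m\lambda_2},\dots,e^{2\pi i m\lambda_n})\,F'_\lambda$ is pluripolar, being a countable union of images of $F'_\lambda$ under linear automorphisms of $\mathbb C^{n-1}$; write $G\subset\{\psi=-\infty\}$ with $\psi\in\mathrm{PSH}(\mathbb C^{n-1})$, $\psi\not\equiv-\infty$. On each chart $\{z_1\notin C_i\}$ the function $v_i(z):=\log|z_1|+\psi(\pi_i(z))$ is plurisubharmonic and not identically $-\infty$; the only effect of the branch ambiguity of $z_1^{\lambda_k}$ is to multiply $\pi_i(z)$ by one of the diagonal rotations above, so $\pi_i$ carries $S^X_0(F)\cap\{z_1\notin C_i\}$ into $G$ and $v_i\equiv-\infty$ there. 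Since the two charts cover $\{z_1\ne0\}$, pluripolarity is a local property, and $\{z_1=0\}$ is pluripolar, $S^X_0(F)$ is pluripolar, whence by Step 1 $F$ is $\lambda$-pluripolar.

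Combining the two steps, $S^X_0(F)$ is nonpluripolar iff $F'_\lambda$ is nonpluripolar. If $S^X_0(F)$ has a regular leaf then $F'_\lambda$ is locally $L$-regular at some point, hence nonpluripolar (Remark \ref{L-regular}), hence $S^X_0(F)$ is nonpluripolar. Conversely, if $S^X_0(F)$ is nonpluripolar then $F'_\lambda$ is nonpluripolar; by Remark \ref{L-regular} the set of points of $\overline{F'_\lambda}$ at which $F'_\lambda$ fails to be locally $L$-regular is pluripolar, so $F'_\lambda$ is locally $L$-regular at some point $p\in F'_\lambda$ itself, and writing $p=\pi_i(z)\in F'_{\lambda,i}$ with $z\in F\subset\bar F$, $z_1\ne0$, one sees that $L_z$ is a regular leaf. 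The delicate point is Step 2: one has to keep track of the multivaluedness of the direction map $z\mapsto\pi_i(z)$, which is precisely why the pulled-back defining function must be built from the rotation-saturated set $G$ rather than from $F'_\lambda$ directly, so that it is genuinely $-\infty$ along the whole suspension.
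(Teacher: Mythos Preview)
Your proof is correct and follows the same overall strategy as the paper—reduce the statement to the dichotomy ``pluripolar versus locally $L$-regular'' for $F'_\lambda$—but the two implementations diverge in the converse of Step~2. The paper proves ``$F'_\lambda$ pluripolar $\Rightarrow$ $S^X_0(F)$ pluripolar'' by introducing the entire map $\psi(z',t)=(e^{-\lambda_1 t},z'_1e^{-\lambda_2 t},\dots,z'_{n-1}e^{-\lambda_n t})$, checking that its Jacobian never vanishes so that $\psi$ is a local biholomorphism, and then transporting a PSH function from $F'_{\lambda,i}\times S_{r,s}$ to a neighborhood of each point of $S^X_0(F)\cap\{z_1\neq 0\}$ via $\psi^{-1}$; this avoids any branch ambiguity since $t\mapsto e^{-t}$ is single-valued. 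You instead pull back directly through the direction map $\pi_i$, and compensate for its multivaluedness by first saturating $F'_\lambda$ under the countable group of diagonal rotations $\mathrm{diag}(e^{2\pi i m\lambda_2},\dots,e^{2\pi i m\lambda_n})$, so that the pulled-back function $v_i=\log|z_1|+\psi\circ\pi_i$ is genuinely $-\infty$ along the entire suspension in each chart. Both arguments are valid; the paper's is slightly cleaner on the bookkeeping side, while yours is more explicit about what the obstruction actually is and makes the role of the $\lambda$-direction set more transparent. Your intermediate Step~1 (factoring through ``$F$ is $\lambda$-pluripolar'' via $\Psi_{S^X_0(F),\lambda}=\Psi_{F,\lambda}$) is not in the paper but is a harmless and natural simplification.
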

\begin{proof}
	By the previous discussion, it suffices to show that $S^X_0(F)$ is pluripolar if, and only if, $F'_{\lambda}$ is pluripolar. Suppose that $S^X_0(F)$ is pluripolar. Then by Theorem \ref{characterizationofHppsets}, there exists a function $u\in H_{\lambda}$ such that $u\equiv 0$ on $S^X_0(F)$. Recall the notations in Definition \ref{def.susp.} and note that $u(z)=|z_1|\cdot u(1,z')=0$ whenever $z=(z_1,\dots,z_n)\in F$ and
	\[
	z'=\bigg(\frac{z_2}{z^{\lambda_2}_1},\dots,\frac{z_n}{z^{\lambda_n}_1}\bigg)\in F'_{\lambda,i},~z_1\notin C_i.
	\]
    If the map $v(z'):=u(1,z')\in \textup{PSH}(\mathbb{C}^{n-1})$ is constant on $\mathbb{C}^{n-1}$, then $S^X_0(F)\subset \{z\in \mathbb{C}^n:z_1=0\}$ so that $F'_{\lambda}=\emptyset$ is pluripolar. If $v$ is not constant, then it follows from (\ref{quasihom}) that $\textup{log}\,v$ is a nonconstant plurisubharmonic function on $\mathbb{C}^{n-1}$ such that $\textup{log}\,v\equiv -\infty$ on $F'_{\lambda}$. Therefore, we conclude that $F'_{\lambda}$ is pluripolar.
    
    Conversely, suppose that $F'_{\lambda}$ is pluripolar. To show that $S^X_0(F)$ is also pluripolar, consider the holomorphic map $\psi: \mathbb{C}^{n-1}\times \mathbb{C} \to \mathbb{C}^n$ defined as
   	\begin{gather*}
		\psi(z'_1,\dots,z'_{n-1},t)=(e^{-\lambda_1t},z'_1e^{-\lambda_2t},\dots,z'_{n-1}e^{-\lambda_nt}).
	\end{gather*}
	By a straightforward computation, one can show that the modulus of the determinant of the complex Jacobian of $\psi$ at a fixed point $(z',t)\in  \mathbb{C}^{n-1}\times \mathbb{C}$ is
	\[
	\textup{exp}\,{\Bigg(-\textup{Re}\,t\cdot \sum\limits_{k=1}^{n}\lambda_k\Bigg)}\neq 0.
	\]
	So the map $\psi$ is a local biholomorphism by the inverse function theorem. Let $w\in A:=S^X_0(F)\cap \{z\in \mathbb{C}^n:z_1\neq 0\}$. Then there is a number $i\in \{1,2\}$ and an open neighborhood $U\subset \mathbb{C}^n$ of $w$ such that 
	\begin{enumerate}
		\setlength\itemsep{0.1em}
		\item $z_1 \notin C_i~\text{if}~z=(z_1,\dots,z_n)\in U,$ 
		\item $\psi^{-1}|_{U}$ is a well-defined biholomorphism, and
		\item $\psi^{-1}(U\cap A)\subset F'_{\lambda,i}\times S_{r,s}$ where $S_{r,s}=\{t\in \mathbb{C}: r<\textup{Re}\,t<s\}$ for some $0<r<s$.
	\end{enumerate}
    Since $F'_{\lambda,i}\times S_{r,s}$ is also pluripolar by the given assumption, there is a nonconstant function $u\in \textup{PSH}(\mathbb{C}^n)$ such that $u\equiv -\infty$ on $F'_{\lambda,i}\times S_{r,s}$. Then $v:=u\circ \psi^{-1}|_{U}\in \textup{PSH}(U)$ is a nonconstant function satisfying $v\equiv -\infty$  on $U\cap A$. Therefore, $A$ is locally pluripolar and it is pluripolar. Since the set equality
	\[
	S^X_0(F)=A\cup (S^X_0(F)\cap \{z\in \mathbb{C}^n:z_1= 0\})
	\]
	holds and the set $\{z\in \mathbb{C}^n:z_1=0\}$ is pluripolar, we conclude that $S^X_0(F)$ is pluripolar.
\end{proof}
\begin{remark}\label{uniformIFT}
	\normalfont 
	Choose an open set $U\subset \mathbb{C}^{n-1},\,s>0$ and consider the restriction of $\psi$ to $U\times S_{0,s}$. Then the second-order partial derivatives of the map are uniformly bounded and the modulus of the complex Jacobian of the map is bounded from below by a positive uniform constant. So by a version of the inverse function theorem in \cite{Christ85}, there is a uniform number $R>0$ such that each point $w\in \psi(U\times S_{0,s})$ has an open neighborhood $B^n(w;R)$ on which $\psi^{-1}$ is a well-defined local biholomorphism.
\end{remark}

Choose a polynomial $q_m\in \mathcal{H}_{\lambda}$ with $\textup{bideg}\,q_m=(\rho_m,0),\,m\geq 1$ and a set $E\subset \mathbb{C}^n$. Then recall that Definition \ref{definitionofextrftn} implies the following Bernstein-Walsh type inequality:
\begin{equation}\label{BW inequality}
|q_m(z)|\leq \|q_m\|_{E}\cdot \{\Psi_{E,\lambda}(z)\}^{\rho_m}~\text{for any}~z\in \mathbb{C}^n.
\end{equation} 
\begin{theorem}\label{normalsuspension}
	Let $S^X_0(F)\subset \mathbb{C}^n$ be a regular suspension. If a formal series $S\in \mathbb{C}[[z_1,\dots,z_n]]$ is holomorphic along $S^X_0(F)$, then it is holomorphic on a domain of holomorphy
	\begin{equation}\label{set inclusion}
		\Omega:=\{z\in \mathbb{C}^n:\Psi^{\ast}_{S^X_0(F),\lambda}(z)<1\}\supset B^n(0;\{\rho_{\lambda}(S^{X}_0(F))\}^{\textup{max}(\lambda)})
	\end{equation}
	containing the origin. 	Conversely, Let $F\subset S^{2n-1}$ be a $\lambda$-circular $F_{\sigma}$ set such that $S^X_0(F)$ is not regular. Then there exists a formal power series $S\in \mathbb{C}[[z_1,\dots,z_n]]$ such that the series is holomorphic along $S^X_0(F)$ but it does not converge uniformly on any open neighborhood of the origin.
\end{theorem}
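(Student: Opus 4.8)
\emph{Proof proposal.} For either implication, given a formal power series $S\in\mathbb C[[z_1,\dots,z_n]]$ write $S=\sum_{m\ge 0}q_m$ with $q_m\in\mathcal H_\lambda$, $\textup{bideg}\,q_m=(\rho_m,0)$, so that formally $S\circ\Phi^X(z,t)=\sum_m q_m(z)e^{-\rho_m t}$; set $u_m:=|q_m|^{1/\rho_m}$ and $g:=\limsup_{m\to\infty}u_m$. For the direct implication I would run the argument of Theorem \ref{quasihomexp}, with the ``$H_\lambda$-almost everywhere'' machinery of Subsection \ref{subsection 4.3} supplying the needed global estimate on $g$. First, unwinding the hypothesis via Proposition \ref{asympTaylor} and Lemma \ref{convergence}: that $t\mapsto S\circ\Phi^X(z,t)$ be holomorphic on $\mathbb H$ means $\sum_m q_m(z)e^{-\rho_m t}$ converges for all $t\in\mathbb H$, and Lemma \ref{convergence} then forces $g(z)=\limsup_m|q_m(z)|^{1/\rho_m}\le 1$ for every $z\in F$. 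Since each $u_m$, hence $g$, satisfies $g(\Phi^X(z,t))=e^{-\textup{Re}\,t}g(z)$, the bound $g\le 1$ on $F$ propagates to $g\le 1$ on all of $S^X_0(F)$. The suspension being regular, it is nonpluripolar (Proposition \ref{regularnonpp}), hence, being $\lambda$-circular, non-$\lambda$-pluripolar (Theorem \ref{characterizationofHppsets}); thus by Theorem \ref{pluripolarchar1} the functions $\sup_{m\ge N}u_m$ lie in $H_\lambda$ and are locally uniformly bounded, and a routine argument with Lemma \ref{Hlambdaa.e.property} and Proposition \ref{countablepp} yields $g^*\in H_\lambda$ and $g=g^*$ $H_\lambda$-a.e.\ on $\mathbb C^n$. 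Hence $g^*\le 1$ $H_\lambda$-a.e.\ on $S^X_0(F)$, and Theorem \ref{extremala.e.} gives $g^*\le\Psi^*_{S^X_0(F),\lambda}$ on $\mathbb C^n$.

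Now on $\Omega=\{z:\Psi^*_{S^X_0(F),\lambda}(z)<1\}$ one has $\limsup_m u_m=g\le g^*<1$. Fixing a compact $K\subset\Omega$ and using that $\Omega$ is $\lambda$-balanced, $K_{t_0}:=\{\Phi^X(z,-t_0):z\in K\}$ is again a compact subset of $\Omega$ for $t_0>0$ small, because $\Psi^*_{S^X_0(F),\lambda}(\Phi^X(z,-t_0))=e^{t_0}\Psi^*_{S^X_0(F),\lambda}(z)$; applying Hartogs' lemma (Lemma \ref{Hartogslemma}) on $K_{t_0}$ and using $q_m(z)=e^{-\rho_m t_0}q_m(\Phi^X(z,-t_0))$ gives $\|q_m\|_K\le c^{\rho_m}$ for some $c<1$ and all large $m$, exactly as in Theorem \ref{quasihomexp}. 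Then $\sum_m q_m$ converges uniformly on $K$ by Lemma \ref{convergence}, so $S$ is holomorphic on $\Omega$; and $\Omega$ is a $\lambda$-balanced domain of holomorphy since $\Psi^*_{S^X_0(F),\lambda}\in H_\lambda$ (cf.\ Proposition \ref{domainofholo1} and the proof of Theorem \ref{quasihomstar}). The inclusion $\Omega\supset B^n(0;\{\rho_\lambda(S^X_0(F))\}^{\textup{max}(\lambda)})$ follows from the identity $\rho_\lambda(E)=1/\|\Psi_{E,\lambda}\|_{S^{2n-1}}$ in \eqref{capacityandextr}, the growth bound \eqref{Hlambdagrowth}, and the elementary estimate $e^{-\textup{Re}\,t}\le\|\Phi^X(w,t)\|^{1/\textup{max}(\lambda)}$ for $w\in S^{2n-1}$: any $u\in H_\lambda$ which is $\le 1$ on a suitably large bounded subset of $S^X_0(F)$ satisfies $\|u\|_{S^{2n-1}}\le 1/\rho_\lambda(S^X_0(F))$, whence $u(z)\le\|z\|^{1/\textup{max}(\lambda)}/\rho_\lambda(S^X_0(F))<1$ as soon as $\|z\|<\{\rho_\lambda(S^X_0(F))\}^{\textup{max}(\lambda)}$.

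For the converse, let $F\subset S^{2n-1}$ be a $\lambda$-circular $F_\sigma$ set with $S^X_0(F)$ not regular. By Proposition \ref{regularnonpp} the set $S^X_0(F)$ is pluripolar, hence $\lambda$-pluripolar (Theorem \ref{characterizationofHppsets}), so there is a nonconstant $u_0\in H_\lambda$ vanishing on $S^X_0(F)$; by the identity $u_0(\Phi^X(w,t))=e^{-\textup{Re}\,t}u_0(w)$ it also vanishes on $F$. Fix $\zeta_0\in S^{2n-1}$ with $u_0(\zeta_0)>0$ (such $\zeta_0$ exists, else $u_0$ would vanish identically). Write $F=\bigcup_{j\ge1}K_j$ with $\{K_j\}$ an increasing sequence of compacta. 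Since $u_0\equiv0$ on $K_j$, every multiple $m\,u_0$ is admissible for $\Psi_{K_j,\lambda}$, so $\Psi_{K_j,\lambda}(\zeta_0)=+\infty$; by Theorem \ref{quasiextremal} there is $q_j\in\mathcal H_\lambda$ with $\|q_j\|_{K_j}\le 1$ and $|q_j(\zeta_0)|^{1/\rho_{m_j}}>j$, where $\textup{bideg}\,q_j=(\rho_{m_j},0)$, and after replacing each $q_j$ by a suitable power we may take $\rho_{m_1}<\rho_{m_2}<\cdots$. Put $S:=\sum_{j}q_j$, a formal power series of holomorphic type. For $z\in F$ we have $z\in K_j$, hence $|q_j(z)|^{1/\rho_{m_j}}\le 1$, for all large $j$, so $\limsup_j|q_j(z)|^{1/\rho_{m_j}}\le 1$ and Lemma \ref{convergence} shows $\sum_jq_j(z)e^{-\rho_{m_j}t}$ converges locally uniformly on $\mathbb H$; thus $S$ is holomorphic along $S^X_0(F)$. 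On the other hand, for any $r\in(0,1)$ put $w_r:=\Phi^X(\zeta_0,t_r)$ with $\textup{Re}\,t_r=\textup{log}(1/r)/\textup{min}(\lambda)$; then $w_r\in\overline{B^n(0;r)}$ and $|q_j(w_r)|^{1/\rho_{m_j}}=r^{1/\textup{min}(\lambda)}|q_j(\zeta_0)|^{1/\rho_{m_j}}>r^{1/\textup{min}(\lambda)}\,j\to\infty$, so $\|q_j\|_{B^n(0;r)}\to\infty$ and $\sum_jq_j$ cannot converge uniformly on $B^n(0;r)$.

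The crux is the direct implication: converting the merely pointwise leafwise datum ``$g\le 1$ on $F$'' into the global inequality $g^*\le\Psi^*_{S^X_0(F),\lambda}$. Since $S^X_0(F)$ may have empty interior one cannot avoid the regularization $g^*$, and controlling it draws on essentially all of the $H_\lambda$-a.e.\ theory above --- Lemma \ref{Hlambdaa.e.property}, Theorem \ref{pluripolarchar1}, Theorem \ref{extremala.e.}, and the coincidence of $\lambda$-pluripolarity with pluripolarity for $\lambda$-circular sets (Theorem \ref{characterizationofHppsets}). In the converse direction the only real point is that a compact exhaustion $F=\bigcup_jK_j$ must be available in order to secure the uniform bounds $\|q_j\|_{K_j}\le 1$ on which the construction rests; this is precisely the role of the $F_\sigma$ (and $\lambda$-circular) hypothesis on $F$.
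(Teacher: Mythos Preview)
Your proof is correct, but both halves run on different rails from the paper's, so a brief comparison is in order.

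For the direct implication, the paper does \emph{not} work with $g=\limsup_m u_m$ and the $H_\lambda$-a.e.\ machinery. Instead it stratifies the suspension into sets
\[
L_{k,\ell}=\Big\{\Phi^X(z,t):z\in F,\ \textup{Re}\,t>\tfrac{1}{\ell},\ |S(\Phi^X(z,\cdot))|\le k\text{ on }\{\textup{Re}\,t>\tfrac1\ell\}\Big\},
\]
on each of which Proposition~\ref{asympholo} gives the \emph{uniform} bound $|q_m|\le k$ for all $m$; then one applies the Bernstein--Walsh inequality with respect to $L_{k,\ell}$ and lets $k,\ell\to\infty$ using the continuity-from-above Theorem~\ref{increasingunion2} to obtain $\limsup_m|q_m|^{1/\rho_m}\le\Psi^*_{S^X_0(F),\lambda}$. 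Your route---bounding $g\le 1$ leafwise, passing to $v_N:=\sup_{m\ge N}u_m$, invoking Theorem~\ref{pluripolarchar1} (more precisely, $(v_N)^*\in H_\lambda$; note you should justify $\sup_m u_m<\infty$ on $F$, which follows from boundedness of the terms of a convergent series), and then Lemma~\ref{Hlambdaa.e.property}/Theorem~\ref{extremala.e.}---is more abstract and avoids the explicit exhaustion, while the paper's approach is more hands-on and makes direct use of the asymptotic Cauchy estimate. Both end at the same inequality and then finish identically via Lemma~\ref{Hartogslemma} and Lemma~\ref{convergence}.

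For the converse, the paper does not go through $\Psi_{K_j,\lambda}$ and Theorem~\ref{quasiextremal}. It instead takes a single $u\in H_\lambda$ vanishing on $F$, represents it via Theorem~\ref{quasihomstar} as $u=(\limsup_m|p_m|^{1/\rho_m})^*$, and uses Hartogs' lemma on the compacta $K_m$ to extract a subsequence with $|p_{n_m}|^{1/\rho_{n_m}}\le 1/m^2$ on $K_m$; the bad series is then $\sum_m m^{\rho_{n_m}}p_{n_m}/p_{n_m}(a)$, shown to diverge at an explicit sequence $b_k\to 0$. Your construction via Theorem~\ref{quasiextremal} is slicker and bypasses the representation theorem; the paper's version yields the marginally stronger conclusion that $S$ actually diverges at points arbitrarily close to $0$, not merely that uniform convergence fails.
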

\begin{proof}
Suppose that $S^X_0(F)$ is a regular suspension. Then $\rho_{\lambda}(S^X_0(F))>0$ and the suspension is not $\lambda$-pluripolar by Theorem \ref{characterizationofHppsets} and Proposition \ref{regularnonpp}. Let $S\in \mathbb{C}[[z_1,\dots,z_n]]$ be a formal power series such that $t\in \mathbb{H}\to S(\Phi^{X}(z,t))$ is holomorphic for each $z\in F$. Then we are to show that $S$ converges uniformly on each compact subset of $\Omega$. Write $S=\sum_{m=0}^{\infty}q_m$, where $q_m\in H_{\lambda}$, and $\textup{bideg}\,q_m=(\rho_m,0)$. For each positive integer $k$ and $\ell$, define
\begin{align*}
	F_{k,\ell}&:=\Big\{z\in F: |S(\Phi^{X}(z,t))|\leq k~\text{if}~\textup{Re}\,t>\frac{1}{\ell}\Big\},\\
	L_{k,\ell}&:=\Big\{\Phi^{X}(z,t): z\in F_{k,\ell},~\textup{Re}\,t>\frac{1}{\ell}\Big\}.
\end{align*}
Since $S$ is holomorphic along $S^X_0(F)$, we have
\begin{equation}\label{union}
 \bigcup\limits_{k,\ell=1}^{\infty}L_{k,\ell}=S^X_0(F).
\end{equation}
Fix $z\in F_{k,\ell}$ and $t\in \mathbb{C}$ with $\textup{Re}\,t>\frac{1}{\ell}$. Then
\[
S(\Phi^{X}(z,t))=\sum_{m=0}^{\infty}q_m(z)e^{-\rho_m t}=\sum_{m=0}^{\infty}q_m(z)e^{-\frac{\rho_m}{\ell}}e^{-\rho_m (t-\frac{1}{\ell})}.
\]
By Proposition \ref{asympholo}, we have
\[
|q_m(z)|\leq k\cdot e^{\frac{\rho_m}{\ell}}~\text{for each}~m.
\]
So $|q_m(z)|\leq k$ for each $m\geq 1$ and $z\in L_{k,\ell}$. Then the estimate
\[
|q_m(z)|\leq k\cdot (\Psi^{\ast}_{L_{k,\ell},\lambda}(z))^{\rho_m}~\text{for each}~z\in \mathbb{C}^n,\, k,\ell,m\geq 1
\]
 follows from (\ref{BW inequality}) so that
\begin{equation}\label{bwinequality}
\limsup_{m\to \infty}|q_m(z)|^{\frac{1}{\rho_m}}\leq \Psi^{\ast}_{L_{k,\ell},\lambda}(z)~\text{for any}~z\in \mathbb{C}^n,\, k,\ell\geq 1.
\end{equation}
By Theorem \ref{increasingunion2} and (\ref{union}), (\ref{bwinequality}) reduces to 
\begin{equation*}
\limsup_{m\to \infty}|q_m(z)|^{\frac{1}{\rho_m}}\leq \Psi^{\ast}_{{S^X_0(F)},\lambda}(z)~\text{for each}~ z\in \mathbb{C}^n.
\end{equation*}
 Since $S^X_0(F)$ is nonpluripolar, $\Psi^{\ast}_{S^X_0(F),\lambda}\in H_{\lambda}$  by Theorem \ref{pluripolarchar1} and Theorem \ref{characterizationofHppsets}.
Therefore, $\Omega$ is a domain of holomorphy containing the origin. Furthermore, it follows from Lemma \ref{convergence} that $S$ converges uniformly on each compact subset of  $\Omega$. So the series $S$ is holomorphic on $\Omega$.

 Now we establish the set inclusion (\ref{set inclusion}). Note first that $|z_1|\leq \Psi^{\ast}_{B^n}(z)\leq 1$ for each $z\in \mathbb{C}^n$. Then $\rho_{\lambda}(B^n)=1$ by (\ref{capacityandextr})  and we also have $0\neq \rho_{\lambda}(S^X_0(F))\leq \rho_{\lambda}(B^n)=1$ for any $(X,\lambda)$ and $F\subset S^{2n-1}$. Suppose that 
 \[
 \|z\|< \{\rho_{\lambda}(S^X_0(F))\}^{\textup{max}(\lambda)}\leq 1,\, z\neq 0
 \] 
 and let 
\begin{equation}\label{z_lambda}
z_{\lambda}:=(z_1\cdot \|z\|^{-\frac{\lambda_1}{\textup{max}(\lambda)}},\dots,z_n\cdot \|z\|^{-\frac{\lambda_n}{\textup{max}(\lambda)}})\in B^n.
\end{equation}
Since $\Psi^{\ast}_{S^X_0(F)}\in H_{\lambda}$, we have
\begin{align*}
 \Psi^{\ast}_{S^X_0(F)}(z)&=\|z\|^{\frac{1}{\textup{max}(\lambda)}}\cdot \Psi^{\ast}_{S^X_0(F)}(z_{\lambda})\leq \|z\|^{\frac{1}{\textup{max}(\lambda)}}\cdot \big\|\Psi^{\ast}_{S^X_0(F)}\big\|_{S^{2n-1}}\\
 &=  \|z\|^{\frac{1}{\textup{max}(\lambda)}}\cdot \{\rho_{\lambda}(S^X_0(F))\}^{-1}<1
\end{align*}
by (\ref{capacityandextr}). Therefore, $z\in \Omega$ as desired.

Conversely, suppose that $S^X_0(F)$ is not regular and $F\subset S^{2n-1}$ is a $\lambda$-circular $F_{\sigma}$ set. Then one can choose an increasing sequence $\{K_m\}$ of compact subset of $S^{2n-1}$ such that $F=\bigcup_{m=1}^{\infty} K_m$. Since $S^X_0(F)$ is $\lambda$-pluripolar by Theorem \ref{characterizationofHppsets} and Proposition \ref{regularnonpp}, there exists a function $u\in H_{\lambda}$ such that $u\equiv 0$ on $F$. Note that $u$ can be written as
\[
u= {\bigg(\limsup\limits_{m\to \infty}|p_m|^{\frac{1}{\rho_m}}{\bigg)}}^{\ast},
\]
where $p_m\in \mathcal{H}_{\lambda},$ and $\textup{bideg}\,p_m=(\rho_m,0)$ for each $m$ by Theorem \ref{quasihomstar}. As $u$ is nonconstant, there is a point $a\in \mathbb{C}^n$ such that $\limsup_{m\to \infty}|p_m(a)|^{\frac{1}{\rho_m}}\neq 0$. Choose an increasing sequence $\{n_m\}$ of positive integers and $A>0$ such that $|p_{n_m}(a)|\geq A^{\rho_{n_m}}>0$ for each $m\geq 1$. Since $u(z)=0$ for each $z\in F$,  one can apply Lemma \ref{Hartogslemma} and assume that 
\[
|p_{n_m}(z)|^{\frac{1}{\rho_{n_m}}}\leq \frac{1}{m^2}~\text{for any}~z\in K_m,~m\geq 1
\]
by taking a subsequence of $\{n_m\}$ if necessary. For each $m\geq 1,$ define a polynomial
\[
q_m(z):= m^{\rho_{n_m}}\frac{p_{n_m}(z)}{p_{n_m}(a)}\in \mathcal{H}_{\lambda}
\]
with $\textup{bideg}\,q_m=(\rho_m,0)$. Note that 
\[
(\|q_m\|_{K_{m_0}})^{\frac{1}{\rho_{n_m}}}\leq (m\cdot|p_{n_m}(a)|^{\frac{1}{\rho_{n_m}}})^{-1}\leq \frac{1}{A\cdot m}	
\]
whenever $m\geq m_0$. Therefore, the formal series $S:=\sum_{m=1}^{\infty}q_m\in \mathbb{C}[[z_1,\dots,z_n]]$ is uniformly convergent on each $K_m$ by Lemma \ref{convergence}. For each $k\geq 1$, define
\[
b_k:=\bigg(\frac{a_1}{k^{\lambda_1}},\dots,\frac{a_n}{k^{\lambda_n}}\bigg).
\]
Then $\lim_{k\to \infty}b_k=0\in \mathbb{C}^n$ and $q_m(b_k)=\big(\frac{m}{k}\big)^{\rho_{n_m}}~\text{for any}~k,m\geq 1$ so that $S(b_k)=\sum_{m=1}^{\infty}q_m(b_k)$ is divergent for each $k$. Hence, the correspondence $t\in \mathbb{H}\to S\circ \Phi^{X}(z,t)$ defines a holomorphic function for each $z\in F$ but $S$ does not converge uniformly on any open neighborhood of the origin. 
\end{proof}

Let $(X,\lambda)$ be a vector field on $\mathbb{C}^n$ and $F\subset S^{2n-1}$ a countable set. Then the set
\[
\{\Phi^X(z,t):z\in F,\, t\in \mathbb{C},\,\textup{Re}\,t=0\}\subset S^{2n-1}
\]
is a $\lambda$-circular $F_{\sigma}$ set containing $F$. So Theorem \ref{normalsuspension} yields the following
\begin{corollary}\label{countablenonnormal}
If $F\subset S^{2n-1}$ is countable, then the suspension $S^X_0(F)$ is always nonnormal.
\end{corollary}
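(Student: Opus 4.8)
The plan is to enlarge $F$ to a $\lambda$-circular $F_\sigma$ set $G$ with the same underlying suspension, and then to invoke the converse half of Theorem~\ref{normalsuspension}. Concretely, set
\[
G:=\{\Phi^{X}(z,t):z\in F,\ t\in\mathbb{C},\ \textup{Re}\,t=0\}.
\]
First I would record the elementary structural facts about $G$. Since each $\lambda_j$ is real, $\textup{Re}\,t=0$ forces $|e^{-\lambda_j t}|=1$, so $\|\Phi^{X}(z,t)\|=\|z\|$ and hence $G\subset S^{2n-1}$; clearly $F\subset G$ (take $t=0$); and the flow identity $\Phi^{X}(\Phi^{X}(z,t_0),t)=\Phi^{X}(z,t_0+t)$ shows at once both that $G$ is $\lambda$-circular and that $S^{X}_{0}(G)=S^{X}_{0}(F)$ (the inclusion $\supseteq$ follows from $F\subset G$; for $\subseteq$, note that $\textup{Re}\,t_0=0$ and $\textup{Re}\,t>0$ give $\textup{Re}(t_0+t)>0$). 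Finally $G=\bigcup_{z\in F}\{\Phi^{X}(z,is):s\in\mathbb{R}\}$ is a countable union of continuous images of $\mathbb{R}$, hence $\sigma$-compact, so $G$ is an $F_\sigma$ subset of $S^{2n-1}$.

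The one point that actually uses the machinery of the paper is that $S^{X}_{0}(G)$ is not regular. Here I would exploit that $F$ is countable: by Definition~\ref{def.susp.}, its $\lambda$-direction set $F'_{\lambda}=F'_{\lambda,1}\cup F'_{\lambda,2}$ is the union, over $i\in\{1,2\}$, of the image of $F$ under a \emph{single-valued} map $z\mapsto(z_2/z_1^{\lambda_2},\dots,z_n/z_1^{\lambda_n})$ built from the fixed branch $\textup{Log}_i$; hence $F'_{\lambda}$ is countable, and therefore pluripolar in $\mathbb{C}^{n-1}$ (a countable union of pluripolar sets is pluripolar; here we use $n\ge 2$). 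But it is shown in the proof of Proposition~\ref{regularnonpp} that $S^{X}_{0}(F)$ is pluripolar if and only if $F'_{\lambda}$ is pluripolar; thus $S^{X}_{0}(F)$ is pluripolar, and by $S^{X}_{0}(G)=S^{X}_{0}(F)$ and Proposition~\ref{regularnonpp} the suspension $S^{X}_{0}(G)$ has no regular leaf, i.e.\ it is not regular.

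It then remains to apply the converse part of Theorem~\ref{normalsuspension} to the $\lambda$-circular $F_\sigma$ set $G$: there is a formal power series $S\in\mathbb{C}[[z_1,\dots,z_n]]$ such that $t\in\mathbb{H}\mapsto S\circ\Phi^{X}(z,t)$ is holomorphic for every $z\in G$, yet $S$ does not converge uniformly on any open neighborhood of the origin. Since $F\subset G$, the holomorphy condition holds in particular for every $z\in F$, i.e.\ $S$ is holomorphic along $S^{X}_{0}(F)$; as $S$ fails to converge uniformly near $0$, this shows $S^{X}_{0}(F)$ is not normal. The only subtlety worth flagging is organizational: one must run Theorem~\ref{normalsuspension} on $G$ rather than on $F$, because a general countable $F\subset S^{2n-1}$ need not be $\lambda$-circular, and then carry the conclusion back to $F$ using the inclusion $F\subset G$ together with the identity $S^{X}_{0}(G)=S^{X}_{0}(F)$. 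No genuinely hard step is involved.
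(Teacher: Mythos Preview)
Your proof is correct and follows the same route as the paper: form the $\lambda$-circular $F_\sigma$ set $G=\{\Phi^X(z,t):z\in F,\ \textup{Re}\,t=0\}\subset S^{2n-1}$ containing $F$ and apply the converse half of Theorem~\ref{normalsuspension}. You supply details the paper leaves implicit, notably the verification via Proposition~\ref{regularnonpp} that $S^X_0(G)=S^X_0(F)$ is not regular because $F'_\lambda$ is countable and hence pluripolar.
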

 The method of Sadullaev \cite{Sadullaev22} also yields the following estimate on the region of convergence of $S$. Recall the notation in \textup{(}\ref{z_lambda}\textup{)}.
\begin{theorem}\label{Sadullaev}
  Given the assumptions in Theorem \ref{normalsuspension}, $S$ converges uniformly on an open neighborhood
	\[
	\Omega':=\{z\in \mathbb{C}^n:\|z\|^{\frac{\textup{min}(\lambda)}{\textup{max}(\lambda)}}\cdot \big\{\Phi^{\ast}_{S^X_0(F)}(z_{\lambda})\big\}<1\}
	\]
	of the origin. 
\end{theorem}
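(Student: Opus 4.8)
The plan is to run the proof of Theorem~\ref{normalsuspension} with the $\lambda$-projective extremal function $\Psi^{\ast}_{S^X_0(F),\lambda}$ replaced throughout by Siciak's pluricomplex Green function $\Phi_{S^X_0(F)}=\textup{exp}\,V_{S^X_0(F)}$, exploiting that the quasi-homogeneous components $q_m$ of $S$ are \emph{genuine} polynomials whose ordinary degree satisfies $\rho_m/\textup{max}(\lambda)\le \textup{deg}\,q_m\le \rho_m/\textup{min}(\lambda)$ by (\ref{degreeandrho_k}).

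First I would reuse verbatim the construction in the proof of Theorem~\ref{normalsuspension}: write $S=\sum_{m\ge 0}q_m$ with $q_m\in\mathcal{H}_{\lambda}$, $\textup{bideg}\,q_m=(\rho_m,0)$, introduce the sets $F_{k,\ell}$ and $L_{k,\ell}\subset S^X_0(F)\subset B^n$, and record that $\|q_m\|_{L_{k,\ell}}\le k$ for all $m\ge 1$ while $\bigcup_{k,\ell}L_{k,\ell}=S^X_0(F)$; recall also that $S^X_0(F)$ is nonpluripolar by Proposition~\ref{regularnonpp}. Discarding the vanishing $q_m$ (which contribute nothing), from $\|q_m\|_{L_{k,\ell}}\le k$ I would derive the Bernstein--Walsh estimate $|q_m(w)|\le k\cdot\bigl(\Phi^{\ast}_{L_{k,\ell}}(w)\bigr)^{\textup{deg}\,q_m}$ for every $w\in\mathbb{C}^n$ by applying the definition of $V_{L_{k,\ell}}$ to the function $\frac{1}{\textup{deg}\,q_m}\log|q_m|-\frac{\log k}{\textup{deg}\,q_m}\in\mathcal{L}_n$, and then let $m\to\infty$ (using $\textup{deg}\,q_m\to\infty$, so $k^{1/\textup{deg}\,q_m}\to 1$) to obtain $\limsup_{m}|q_m(w)|^{1/\textup{deg}\,q_m}\le\Phi^{\ast}_{L_{k,\ell}}(w)$ for every $k,\ell$. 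Exhausting $S^X_0(F)$ by the increasing families $M_\ell:=\bigcup_{k}L_{k,\ell}$ and then $\bigcup_{\ell}M_\ell=S^X_0(F)$, and using the increasing-union continuity $\Phi^{\ast}_{E_m}\downarrow\Phi^{\ast}_{E}$ for $E_m\uparrow E$ (which holds by the results of \cite{Siciak82} just as Theorem~\ref{increasingunion2} does for $\Psi^{\ast}_{\cdot,\lambda}$), this upgrades to
\[
\limsup_{m\to\infty}|q_m(w)|^{1/\textup{deg}\,q_m}\le\Phi^{\ast}_{S^X_0(F)}(w)\qquad\text{for every }w\in\mathbb{C}^n.
\]

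Next, for $z\ne 0$ with $\|z\|\le 1$ and $z_{\lambda}$ as in (\ref{z_lambda}), I would use $q_m(\Phi^X(z_{\lambda},t))=e^{-\rho_m t}q_m(z_{\lambda})$ at the \emph{real} value $t=-\log\|z\|/\textup{max}(\lambda)$, for which $\Phi^X(z_{\lambda},t)=z$, to get $|q_m(z)|=\|z\|^{\rho_m/\textup{max}(\lambda)}|q_m(z_{\lambda})|$; since $\rho_m/\textup{deg}\,q_m\ge\textup{min}(\lambda)$ and $\|z\|\le 1$ this gives $|q_m(z)|^{1/\textup{deg}\,q_m}\le\|z\|^{\textup{min}(\lambda)/\textup{max}(\lambda)}|q_m(z_{\lambda})|^{1/\textup{deg}\,q_m}$, and combined with the displayed bound at $z_{\lambda}$ we obtain $\limsup_{m}|q_m(z)|^{1/\textup{deg}\,q_m}\le\varphi(z):=\|z\|^{\textup{min}(\lambda)/\textup{max}(\lambda)}\,\Phi^{\ast}_{S^X_0(F)}(z_{\lambda})$. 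One checks that $\varphi$ is upper semicontinuous and $\varphi(z)\to 0$ as $z\to 0$ (because $\Phi^{\ast}_{S^X_0(F)}$ is u.s.c.\ and hence bounded above on $\bar{B}^n$), so $\Omega'=\{\varphi<1\}$ is an open neighborhood of the origin. To pass from the pointwise bound to locally uniform convergence, note that $u_m:=|q_m|^{1/\textup{deg}\,q_m}$ is plurisubharmonic and, by comparison with a fixed nonpluripolar $L_{k_0,\ell_0}$ (one exists since a countable union of pluripolar sets is pluripolar), locally uniformly bounded above; for a compact $K\subset\Omega'$ I would pick a compact $K'$ with $K\subset\textup{int}\,K'\subset K'\subset\Omega'$, set $\alpha:=\max_{K'}\varphi<1$, and apply Lemma~\ref{Hartogslemma} to get $u_m\le s:=\frac{1+\alpha}{2}<1$ on $K$ for all large $m$. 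Then $\|q_m\|_{K}\le s^{\textup{deg}\,q_m}\le\bigl(s^{1/\textup{max}(\lambda)}\bigr)^{\rho_m}$, and since $s^{1/\textup{max}(\lambda)}<1$ the summation estimate in the proof of Lemma~\ref{convergence} yields $\sum_{m}\|q_m\|_{K}<\infty$. Hence $S=\sum q_m$ converges uniformly on every compact subset of $\Omega'$ and is holomorphic there.

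The step I expect to be the main obstacle is the global estimate in the second paragraph: one must convert the purely leafwise information $\|q_m\|_{L_{k,\ell}}\le k$ into the inequality for $\Phi^{\ast}_{S^X_0(F)}$, which forces the use of the Bernstein--Walsh inequality together with the increasing-union continuity of the \emph{pluricomplex} Green function for an arbitrary---possibly nowhere dense---bounded subset of $B^n$, rather than of the $\lambda$-projective capacity developed in Section~\ref{Sect properties of ext ftns}; the remaining bookkeeping between the two scales $\rho_m$ and $\textup{deg}\,q_m$ and the concluding Hartogs argument are essentially as in the proof of Theorem~\ref{normalsuspension}.
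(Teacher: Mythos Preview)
Your proof is correct and follows essentially the same approach as the paper's: reuse the sets $L_{k,\ell}$ and the bound $\|q_m\|_{L_{k,\ell}}\le k$ from Theorem~\ref{normalsuspension}, apply the Bernstein--Walsh inequality for the pluricomplex Green function, pass to $S^X_0(F)$ via the increasing-union continuity of $\Phi^{\ast}$ (the paper cites \cite[Theorem~2.9]{Siciak90} here), rescale via $z\mapsto z_{\lambda}$ using $\textup{min}(\lambda)\,\textup{deg}\,q_m\le\rho_m\le\textup{max}(\lambda)\,\textup{deg}\,q_m$, and conclude. The only cosmetic differences are that you normalize throughout by $\textup{deg}\,q_m$ whereas the paper normalizes by $\rho_m$ (so the paper's limiting bound reads $\limsup_m|q_m(z)|^{1/\rho_m}\le\varphi(z)^{1/\textup{min}(\lambda)}$, which defines the same $\Omega'$), and that you spell out the final Hartogs/Lemma~\ref{convergence} step for locally uniform convergence, which the paper leaves implicit.
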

 Compare Theorem \ref{Sadullaev} with Theorem 3.1 in \cite{Sadullaev22}.
\begin{proof}
We will use the same notations as in the proof of Theorem \ref{normalsuspension}. Then in particular, $|q_m(z)|\leq k$ holds for each $m\geq 0$ and $z\in L_{k,\ell}$. So by Definition \ref{pluricomplexdef}, we have 
\begin{align*}
|q_m(z)|\leq \|q_m\|_{L_{k,\ell}}\cdot \{\Phi^{\ast}_{L_{k,\ell}}(z)\}^{\textup{deg}\,q_m}\leq k\cdot \{\Phi^{\ast}_{L_{k,\ell}}(z)\}^{\textup{deg}\,q_m}
\end{align*}
for each $k,\ell,m$ and $z\in \mathbb{C}^n$. Note also that $\Phi^{\ast}_{L_{k,\ell}}\geq 1$ on $\mathbb{C}^n$ by Definition \ref{pluricomplexdef}. Therefore,
\[
|q_m(z)|=\|z\|^{\frac{\rho_m}{\textup{max}(\lambda)}}\cdot |q_m(z_{\lambda})|\leq k\cdot \|z\|^{\frac{\rho_m}{\textup{max}(\lambda)}}\cdot \{\Phi^{\ast}_{L_{k,\ell}}(z_{\lambda})\}^{\frac{\rho_m}{\textup{min}(\lambda)}}
\]
so that
\[
\limsup_{m\to \infty}|q_m(z)|^{\frac{1}{\rho_m}}\leq \|z\|^{\frac{1}{\textup{max}(\lambda)}}\cdot \{\Phi^{\ast}_{L_{k,\ell}}(z_{\lambda})\}^{\frac{1}{\textup{min}(\lambda)}}
\]
for each $k,\ell \geq 1$ and $z\in \mathbb{C}^n-\{0\}$. Then it follows from Theorem 2.9 in \cite{Siciak90} and (\ref{union}) that
\begin{align}\label{limsupest}
	\limsup_{m\to \infty}|q_m(z)|^{\frac{1}{\rho_m}}\leq \|z\|^{\frac{1}{\textup{max}(\lambda)}}\cdot \{\Phi^{\ast}_{S^X_0(F)}(z_{\lambda})\}^{\frac{1}{\textup{min}(\lambda)}}
\end{align}
for each $z\in \mathbb{C}^n-\{0\}$. Since $S^X_0(F)$ is nonpluripolar, we have $\textup{log}\,\Phi^{\ast}_{S^X_0(F)}\in \mathcal{L}_n$ by Theorem 2.9 in \cite{Siciak90}. So there is a constant $C\geq 0$ such that
\[
\Phi^{\ast}_{S^X_0(F)}(z)\leq C\cdot (1+\|z\|)~\text{on}~\mathbb{C}^n.
\] 
Note that the function 
\begin{align*}
u(z):&=\|z\|^{\frac{1}{\textup{max}(\lambda)}}\cdot \{\Phi^{\ast}_{S^X_0(F)}(z_{\lambda})\}^{\frac{1}{\textup{min}(\lambda)}}\\
&= \textup{exp}\,\bigg(\frac{1}{\textup{max}(\lambda)}\cdot\textup{log}\,\|z\|+\frac{1}{\textup{min}(\lambda)}\cdot \textup{log}\,\Phi^{\ast}_{S^X_0(F)}(z_{\lambda})\bigg)
\end{align*}
is upper-semicontinuous on $\mathbb{C}^n-\{0\}$. Since 
\[
\limsup_{z\to 0}u(z)\leq C^{\frac{1}{\textup{min}(\lambda)}}\cdot \limsup_{z\to 0}\,\{\|z\|^{\frac{1}{\textup{max}(\lambda)}}\cdot(1+\|z_{\lambda}\|)^{\frac{1}{\textup{min}(\lambda)}}\}=0,
\] 
$u$ can be extended to an upper-semicontinuous function $\tilde{u}$ on $\mathbb{C}^n$ with $\tilde{u}(0)=0$. Therefore, $\Omega'\subset \mathbb{C}^n$ is an open neighborhood of the origin and by (\ref{limsupest}), $S$ defines a holomorphic function on $\Omega'$.
\end{proof}

The following example shows that the normality of $S^X_0(F)$ depends on both $F$ and $X$.
\begin{example}
	\normalfont
	Fix positive integers $m,n$. Let $X_m$ be the vector field on $\mathbb{C}^2$ with eigenvalues $\lambda_m:=(1,m)$ and define
	\[
	F_n:=\bigg\{\bigg(\frac{e^{i\theta}}{\sqrt{2}},\frac{e^{in\theta}}{\sqrt{2}}\bigg)\in S^3: \theta \in [0,2\pi]\bigg\}.
	\]
	Note that $(F_n)^{'}_{\lambda_m}=(F_n)^{'}_{\lambda_m,1}=(F_n)^{'}_{\lambda_m,2}$ since the components of $\lambda_m$ are positive integers. So we have
	\begin{align*}
		(F_n)^{'}_{\lambda_m}&=\{(\sqrt{2})^{m-1}e^{i(n-m)\theta}: \theta \in [0,2\pi]\}\\
		&=
		\begin{cases}
			\{(\sqrt{2})^{m-1}\}~\text{if}~m=n, \\
			\{z\in \mathbb{C}: |z|=(\sqrt{2})^{m-1}\}~\text{if}~m\neq n.
		\end{cases}
	\end{align*}
	Then by Theorem \ref{normalsuspension}, $S^{X_m}_0(F_n)$ is normal if, and only if, $m\neq n$.
\end{example}

\section{Holomorphic extension along flows}\label{Sect ext}

Once $f:B^n\to \mathbb{C}$ in Theorem \ref{main theorem} is shown to be holomorphic on $\Omega$, it is natural to ask whether the function can be extended holomorphically along the suspension. In \cite{KPS09} and \cite{JKS16}, the authors use the rectification theorem \cite{IY07} and Lemma \ref{Hartogslemma} to show that $f$ extends to a holomorphic function on the union of $\Omega$ and all maximal integral curves of $X$ when the given suspension is $S^X_0(S^{2n-1})=B^n$. But as suspensions in our case may not be open, we need the following generalization of Lemma \ref{Hartogslemma} by Shiffman.

\begin{proposition}[\cite{Shiffman89}]\label{ShiffmanHartogs}
Let $U$ be an open subset of $\mathbb{C}^m$ and $P^n(0;r)\subset \mathbb{C}^n$ the polydisc of multi-radius $(r,\dots,r),\,r<1$. Suppose that a set $E\subset U$ is locally $L$-regular at $z_0\in E$. If $f: U \times P^n(0;r)  \to \mathbb{C}$ is holomorphic and the function $f_{z}:w\in P^n(0;r)\to f(z,w)$ extends to a holomorphic function on $P^n:=P^n(0;1)$ for each $z\in E$, then there exists an open neighborhood $U_0=U_0(r,E,U)\subset \mathbb{C}^m$ of $z_0$ such that $f$ on $U_0\times P^n(0;r)$ extends to a holomorphic function on $U_0\times P^n$.
\end{proposition}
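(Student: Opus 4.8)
The plan is to pass to the Taylor expansion of $f$ in the $w$-variables and then run Hartogs' lemma with $z$ as a parameter. Fixing $r'\in(0,r)$ and writing
\[
f(z,w)=\sum_{\alpha\in\mathbb{N}^{n}}c_{\alpha}(z)\,w^{\alpha},\qquad c_{\alpha}(z)=\frac{1}{(2\pi i)^{n}}\int_{|w_{1}|=\dots=|w_{n}|=r'}\frac{f(z,w)}{w^{\alpha+(1,\dots,1)}}\,dw,
\]
each $c_{\alpha}$ is holomorphic on $U$ and the series converges locally uniformly on $U\times P^{n}(0;r)$. So it suffices to produce a neighborhood $U_{0}\ni z_{0}$ with $\limsup_{|\alpha|\to\infty}\bigl(\sup_{K}|c_{\alpha}|\bigr)^{1/|\alpha|}\le 1$ for every compact $K\subset U_{0}$; granting this, $\sum_{\alpha}c_{\alpha}(z)w^{\alpha}$ converges locally uniformly on $U_{0}\times P^{n}$, and its sum — which agrees with $f$ on $U_{0}\times P^{n}(0;r)$ — is the desired holomorphic extension.

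Two estimates are available for the plurisubharmonic functions $u_{\alpha}:=|\alpha|^{-1}\log|c_{\alpha}|$ on $U$. The Cauchy estimates over the tori $\{|w_{j}|=r'\}$ give $|c_{\alpha}|\le M_{K}\,(r')^{-|\alpha|}$ on each compact $K\subset U$, so $\{u_{\alpha}\}_{|\alpha|\ge1}$ is locally uniformly bounded above on $U$; and for $z\in E$, holomorphicity of $f_{z}$ on $P^{n}(0;1)$ together with the Cauchy estimates over $\{|w_{j}|=s\}$ for every $s<1$ gives $\limsup_{|\alpha|\to\infty}u_{\alpha}(z)\le 0$ for each $z\in E$. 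One cannot conclude directly from Lemma~\ref{Hartogslemma}, since $E$ need not be open; the real task is to promote the bound $\limsup u_{\alpha}\le 0$ \emph{along $E$} to a bound on a full neighborhood of $z_{0}$, and it is here that local $L$-regularity is decisive.

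To carry this out, fix a ball $B=B^{m}(z_{0};\delta)$ with $\bar B\subset U$ and a constant $A$ with $u_{\alpha}\le A$ on $\bar B$ for all $|\alpha|\ge1$, and for $j\ge1$ let $E_{j}$ be the set of $z\in E\cap B$ such that $f_{z}$ is holomorphic on $P^{n}(0;1)$ with $\sup_{\overline{P^{n}(0;1-1/j)}}|f_{z}|\le j$; the $E_{j}$ increase to $E\cap B$, and $u_{\alpha}\le\log\tfrac{1}{1-1/j}+\tfrac{\log j}{|\alpha|}$ on $E_{j}$. The two-constants (Bernstein--Walsh type) inequality for plurisubharmonic functions, applied to $u_{\alpha}$ on $B$ with its bound $A$ on $B$ and its bound on $E_{j}$, and written through the relative extremal function $\omega_{j}^{\ast}:=\omega^{\ast}(\cdot,E_{j},B)$ of $E_{j}$ in $B$, gives
\[
u_{\alpha}(z)\le A+\Bigl(A-\log\tfrac{1}{1-1/j}-\tfrac{\log j}{|\alpha|}\Bigr)\,\omega_{j}^{\ast}(z),\qquad z\in B.
\]
Letting $|\alpha|\to\infty$, then $j\to\infty$ (so $\omega^{\ast}(\cdot,E_{j},B)\downarrow\omega^{\ast}(\cdot,E\cap B,B)$), and taking upper semicontinuous regularizations, this yields $\bigl(\limsup_{|\alpha|\to\infty}u_{\alpha}\bigr)^{\ast}\le A\bigl(1+\omega^{\ast}(\cdot,E\cap B,B)\bigr)$ on $B$. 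Since $E$ is locally $L$-regular at $z_{0}$ — equivalently, locally pluriregular there, see p.~186 of \cite{Klimek91} — the function $\omega^{\ast}(\cdot,E\cap B,B)$ is continuous at $z_{0}$ with value $-1$; hence for each $\varepsilon>0$ the open set $W_{\varepsilon}:=\{z\in B: A(1+\omega^{\ast}(z,E\cap B,B))<\varepsilon\}$ is a neighborhood of $z_{0}$ on which $\limsup_{|\alpha|\to\infty}u_{\alpha}\le\varepsilon$.

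It remains to turn these bounds into the uniform coefficient estimate sought at the outset. On $W_{1}$ the family $\{u_{\alpha}\}$ is locally uniformly bounded above with $\limsup_{|\alpha|\to\infty}u_{\alpha}\le1$, so Lemma~\ref{Hartogslemma} already yields $u_{\alpha}\le1+\varepsilon$ on compacts of $W_{1}$ for $|\alpha|$ large; applying the same lemma on the smaller neighborhoods $W_{\varepsilon}$ as $\varepsilon\downarrow0$ and combining by a diagonal argument over a compact exhaustion of a suitable neighborhood $U_{0}$ of $z_{0}$ should deliver $\limsup_{|\alpha|\to\infty}\bigl(\sup_{K}|c_{\alpha}|\bigr)^{1/|\alpha|}\le1$ for every compact $K\subset U_{0}$, with $U_{0}$ depending only on $r$, $E$, $U$. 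I expect this last synthesis to be the main obstacle: it is exactly where Shiffman's refinement of the classical Hartogs lemma is needed — the \emph{quantitative} $L$-regularity of $E$ at $z_{0}$, registered by the continuity of $\omega^{\ast}(\cdot,E\cap B,B)$ there, must be exploited to force the radii of convergence of the slice Taylor series up to $1$ uniformly over a whole neighborhood of $z_{0}$ rather than merely over $E$. The remaining ingredients — the Cauchy estimates and the convergence of the final series — are routine.
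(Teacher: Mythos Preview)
Your approach is essentially the paper's: expand $f$ as a series in $w$ with coefficients holomorphic in $z$, bound the coefficients from above uniformly via Cauchy and pointwise on $E$ via the extension hypothesis, then invoke the relative extremal function together with local $L$-regularity at $z_0$ to transfer the bound to a neighborhood. The paper groups by homogeneous degree rather than by multi-index and adds a shift $+\log R$ for a fixed $R\in(r,1)$; it also writes $h^*_{E,U}\in[0,1]$ where you write $1+\omega^*$. Your filtration $E_j$ stands in for the paper's (implicit, through Shiffman) use of the negligibility of $\{(\limsup u_\alpha)^*>\limsup u_\alpha\}$; you need in addition the monotone convergence $\omega^*(\cdot,E_j,B)\downarrow\omega^*(\cdot,E\cap B,B)$, which is available but should be cited. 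Either route arrives at the same two-constants estimate $(\limsup u_\alpha)^*\le A\,(1+\omega^*)$.

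The gap you flag in the last paragraph is real, and the diagonal argument you sketch does not close it. From $(\limsup u_\alpha)^*\le A(1+\omega^*)$ you obtain $\limsup u_\alpha\le\varepsilon$ only on the shrinking sets $W_\varepsilon$, and Lemma~\ref{Hartogslemma} there then gives slice radius of convergence $\ge e^{-2\varepsilon}$ on compacta of $W_\varepsilon$; no exhaustion manufactures a \emph{fixed} $U_0$ on which the radius reaches $1$, because $\bigcap_{\varepsilon>0}W_\varepsilon=\{z\in B:\omega^*(z,E\cap B,B)=-1\}$ need not be a neighborhood of $z_0$. The paper does not resolve this step in-line either: after introducing $h^*_{E,U}$ and the explicit neighborhood $U_1$ it writes ``one can proceed as in the proof of Lemma~2 in \cite{Shiffman89},'' deferring exactly the point you single out. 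So your diagnosis is correct---the classical Hartogs lemma is not enough, and the quantitative content of Shiffman's lemma must be invoked rather than re-derived from the two-constants inequality alone; the $\log R$ normalization in the paper is the right setup for that lemma but is not by itself the missing idea.
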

 Since it may not be clear from the proof in \cite{Shiffman89} that $U_0$ can be chosen to be dependent only on $r,E,$ and $U$, we give a slightly modified version of the proof.
\begin{proof}
 Let $0<s<r<R<1$ and $V:={U\times P^n(0;s)}$. Assume that $\|f\|_{V}=K<\infty$ for some $K\geq 0$ by shrinking $U$, if necessary. By the given assumption, one can write
\begin{equation}\label{series}
	f:=\sum_{m=0}^{\infty}g^m~\text{on}~U\times P^n(0;r),
\end{equation}
where each $g^m$ is holomorphic on $U\times \mathbb{C}^n$ and $g^m_z(w):=g^m(z,w)\in \mathbb{C}[w_1,\dots,w_n]$ is a homogeneous polynomial of degree $m$ for each $z\in U$. By the Cauchy estimate, we have
\[
\|g^m_z\|_{P^n}\leq Ks^{-m}~\text{for each}~ m\geq 0,\,z\in U.
\]
 Since $f_z$ extends to a holomorphic function on $P^n$ for each $z\in E$, we also have
\[
\|g^m_{z}\|_{P^n}\cdot R^m\to 0~\text{as}~m\to \infty
\]
for any $z\in E$. Define 
\[
u_m(z):=\sup\limits_{\ell\geq m}\frac{1}{\ell}\,\textup{log}\,\|g^{\ell}_{z}\|_{P^n}+\textup{log}\,R\in \textup{PSH}(U)
\]
for each $m\geq 0$. Then by the previous arguments, we have
\begin{enumerate}
	\setlength\itemsep{0.1em}
	\item $u_m(z)\leq \textup{log}\,\frac{R}{s}$ for each $m\geq 0,\,z\in U$, and
	\item $\limsup\limits_{m\to \infty}u_m(z)\leq -3\,\textup{log}\,2~ \text{for any}~ z\in E$.
\end{enumerate}
Let $h_{EU}$ be the $\textit{relative extremal function}$ for $E$ in $U$ defined as
\[
h_{EU}(z):=\textup{sup}\,\{u(z):u\in \textup{PSH}(U): u\leq 0~\text{on}~E,\, u\leq 1~\text{on}~U \}
\]
for each $z\in U$. Then it is known that $h^{\ast}_{EU}(z_0)=0$ and $h^{\ast}_{EU}\in \textup{PSH}(U)$. So the set
\[
 U_1:=\bigg\{z\in U:h^{\ast}_{EU}(z)< \textup{log}\,2\cdot \bigg(\textup{log}\,\frac{R}{s}\bigg)^{-1}\bigg\}
\]
is an open neighborhood of $z_0$ dependent only on $r,E,$ and $U$. Now one can proceed as in the proof of Lemma 2 in \cite{Shiffman89} and show that there exists an open neighborhood $U_0\subset\subset U_1$ of $z_0$ and a positive integer $N$ such that $u_m(z)\leq -\textup{log}\,2$ if $z\in U_0,\,m\geq N$.
This reduces to
\[
|g^m(z,w)|\leq \bigg(\frac{1}{2}\bigg)^m~\text{for each}~(z,w)\in U_0\times P^n,\, m\geq N.
\]
 Therefore, we conclude from the Weierstrass $M$-test that the series in (\ref{series}) defines a holomorphic function on $U_0\times P^n$.
\end{proof}

\begin{proposition}\label{extension}
Let $f:B^n\to \mathbb{C}$ be a function holomorphic on $B^n(0;r)$ for some $r\in (0,1)$. If $f$ is holomorphic along a regular suspension $S^{X}_0(F)$, then there exists a neighborhood $U=U(F,X)\subset S^{2n-1}$ of a generator $z_0\in \bar{F}$ of the regular leaf such that $f|_{B^n(0;r)}$ extends to a holomorphic function on $B^n(0;r)\cup S^X_0(U)$.
\end{proposition}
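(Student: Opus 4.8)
The plan is to transport everything to the local coordinates furnished by the biholomorphism $\psi$ from Proposition \ref{regularnonpp}, and then run an analytic‑continuation‑along‑the‑flow argument in which Shiffman's Hartogs‑type lemma (Proposition \ref{ShiffmanHartogs}) replaces the fibrewise Hartogs lemma used in \cite{KPS09}, \cite{JKS16}, with the $\lambda$-direction set $F'_\lambda$ in the role of the locally $L$-regular set $E$. First I would normalize: let $z_0=(z_{0,1},\dots,z_{0,n})\in\bar F$ generate the regular leaf, so $z_{0,1}\neq 0$ and $F'_\lambda$ is locally $L$-regular at $w_0:=(z_{0,2}/z_{0,1}^{\lambda_2},\dots,z_{0,n}/z_{0,1}^{\lambda_n})\in F'_{\lambda,i}$ for some $i$; fix $t_0$ with $\psi(w_0,t_0)=z_0$ and write $g:=f\circ\psi$. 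Since $\psi(w',t)\to 0$ as $\operatorname{Re}t\to\infty$ uniformly for $w'$ bounded, $g$ is holomorphic on a set of the form $\{(w',t):\|w'-w_0\|<\delta_0,\ \operatorname{Re}t>T_1\}$ with $T_1$ large; and because $f$ is holomorphic along each leaf $L_z$, $z\in F$, for every $w'\in F'_\lambda$ the slice $t\mapsto g(w',t)$ is holomorphic on a right half-plane whose left edge $c(w')$ varies continuously with $w'$ and equals $\operatorname{Re}t_0$ at $w_0$.

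\textbf{The continuation step.} Fix a conformal identification of a half-plane $\{\operatorname{Re}t>T\}$ with the unit disc under which a sub-half-plane $\{\operatorname{Re}t>T+d\}$ becomes the disc of a fixed radius $\rho<1$. I would then apply Proposition \ref{ShiffmanHartogs} with the $w'$-variable as base, the ball $B^{n-1}(w_0;\delta_0)$ as the open set, $E:=F'_\lambda\cap B^{n-1}(w_0;\delta_0)$ (locally $L$-regular at $w_0$ by hypothesis) as the $L$-regular subset, and the $t$-variable as the one-dimensional fibre. Crucially, the neighbourhood produced by Proposition \ref{ShiffmanHartogs} depends only on $\rho$, on $E$, and on the base ball; keeping all three fixed, one iterates the extension finitely many times, each step enlarging the region of joint holomorphy of $g$ from $\{\operatorname{Re}t>T\}$ to $\{\operatorname{Re}t>T-d\}$ at the cost of shrinking the base ball by one fixed factor. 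After finitely many steps this gives an open $w'$-ball $W_0\ni w_0$ and a level $T_\ast$ with $g$ jointly holomorphic on $W_0\times\{\operatorname{Re}t>T_\ast\}$. Here one must use Remark \ref{uniformIFT} to keep the coordinate charts $\psi^{-1}$ of uniformly controlled size as the basepoint travels along the leaf, so that the steps do not degenerate.

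\textbf{Locating a suspension and gluing.} Transporting back by $\psi$, the function $f$ extends holomorphically to the open set $\psi(W_0\times\{\operatorname{Re}t>T_\ast\})$. Since $w'(z)$ and $c(w'(z))=-\log|z_1|$ depend continuously on $z\in S^{2n-1}$, taking the values $w_0$ and $\operatorname{Re}t_0$ at $z=z_0$, one then selects a neighbourhood $U=U(F,X)$ of $z_0$ in $S^{2n-1}$ small enough that, in the $\psi$-coordinates, $S^X_0(U)=\{(w',t):w'\in w'(U),\ \operatorname{Re}t>c(w')\}$ lies inside $W_0\times\{\operatorname{Re}t>T_\ast\}$; then $f$ extends holomorphically to $B^n(0;r)\cup S^X_0(U)$. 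Finally, $B^n(0;r)\cap S^X_0(U)$ is connected and the extension coincides with $f$ on the open subset of it where $\psi$ lands in $B^n(0;r)$, so the identity theorem shows the extension is single-valued and restricts to $f$ on $B^n(0;r)$.

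\textbf{Main obstacle.} The delicate point is exactly the last reduction: Shiffman's lemma only produces a neighbourhood in the leaf‑parameter space $w'$, and one must guarantee that, after the iterated continuation, the resulting open set is genuinely \emph{full‑dimensional} along the whole of $L_{z_0}$ — reaching down to the level $\operatorname{Re}t_0$ with a base ball that has not collapsed — rather than being merely a neighbourhood of the single leaf $L_{z_0}$ that pinches as $\operatorname{Re}t\to\operatorname{Re}t_0$. This is where the scale‑invariance of local $L$-regularity, the uniform dependence of Shiffman's neighbourhood on $(\rho,E,U)$, and the uniform inverse function theorem of Remark \ref{uniformIFT} all have to be combined; everything else is bookkeeping with the two logarithm charts $F'_{\lambda,1},F'_{\lambda,2}$ and with the elementary growth estimates for $\psi$.
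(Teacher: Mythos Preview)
Your plan is essentially the paper's: transport to the $\psi$-coordinates of Proposition~\ref{regularnonpp}, feed $F'_\lambda$ as the locally $L$-regular base set into Shiffman's lemma (Proposition~\ref{ShiffmanHartogs}), and propagate the extension along $L_{z_0}$ using the uniform inverse function theorem of Remark~\ref{uniformIFT}.

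One point in your continuation step deserves care. In the single-chart iteration you describe, the slices $g_{w'}$ for $w'\in F'_\lambda$ are holomorphic only on $\{\operatorname{Re}t>c(w')\}$, so extending jointly to $\{\operatorname{Re}t>T-d\}$ requires $T-d\geq c(w')$ for every $w'$ in the $L$-regular set you hand to Shiffman; since those $w'$ accumulate at $w_0$, the iteration bottoms out at some $T_*>c(w_0)=\operatorname{Re}t_0$. Then $W_0\times\{\operatorname{Re}t>T_*\}$ cannot contain $S^X_0(U)$: for each $w'\in w'(U)$ that set reaches down to $\operatorname{Re}t=c(w')$, and $w'(U)$ always contains points with $c(w')<T_*$ (in particular $w_0$ itself). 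The paper closes this exactly as your ``main obstacle'' paragraph anticipates, by \emph{re-centering} rather than iterating in one chart: at each $w\in L_{z_0}$ with $r/2<\|w\|<1$ it applies Shiffman once in a fresh local chart of uniform size $\epsilon$ supplied by Remark~\ref{uniformIFT}, obtaining a ball $B^n(w;\delta)$ with the \emph{same} $\delta$ at every $w$; a short infimum argument along the leaf then shows $f$ extends to the whole $\delta$-tube about $L_{z_0}$, which contains $S^X_0(U)$ as soon as $U$ has diameter below $\delta$. So the ingredient you cite is indeed the right one, but it enters as a re-centering device producing a uniform tube, not merely as a bound preventing the base ball from collapsing in a fixed chart.
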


\begin{proof}
By applying a unitary transformation if necessary, we may assume that $z_1\neq 0$ if $z=(z_1,\dots,z_n)\in L_{z_0}$. Fix $w\in L_{z_0}$ with $r/2<\|w\|<1$. By Remark \ref{uniformIFT}, there exist numbers $s,\epsilon>0$ independent of the choice of $w$ such that the following hold up to a change of local holomorphic coordinate system at $w$: 
\begin{enumerate}
\setlength\itemsep{0.1em}
\item $P^n(w;2\epsilon)\cap L_{z_0}=\{(t,0,\dots,0): t\in \mathbb{H},\,\textup{Re}\,t< s\}.$
\item Each flow curve of $X$ in $P^n(w;\epsilon)$ is parametrized as $t \to (t,tz'_1,\dots,tz'_{n-1})$ for some $(z'_1,\dots, z'_{n-1})\in F'_{\lambda}$.
\item $f$ is holomorphic along the set of lines in (2).
\end{enumerate}
Since $L_{z_0}$ is a regular leaf, it follows from Remark \ref{L-regular} and Theorem \ref{regularnonpp} that any open neighborhood of $w$ intersects another regular leaf of $S^X_0(F)$. So by Proposition \ref{ShiffmanHartogs}, there exists a number $\delta=\delta(F,X)>0$ such that any function $f:B^n\to \mathbb{C}$ that is (1) holomorphic on $B^n(0;\|w\|),$ and (2) holomorphic along $S^X_0(F)$ extends to a holomorphic function on $B^n(0;\|w\|)\cup B^n(w;\delta)$.

 Let $f:B^n\to \mathbb{C}$ be the given function and choose a number $d>0$ such that 
\[
\|\Phi^{X}(z_0,t)\|=\frac{3r}{4}
\]
holds for any $t\in \mathbb{C}$ with $\textup{Re}\,t=d$. Define
\[
A:=\{s\in (0,d]: f~\text{is holomorphic on}~B^n(\Phi^X(z_0,t);\delta)~\forall t\in \mathbb{C}~\text{with}~\textup{Re}\,t=s\}.
\]
Then $d\in A$ by the given assumption so $A\neq \emptyset$. Suppose that $s_0:=\textup{inf}\,A\neq 0$ and fix $z\in B^n$ with $\|z\|=s_0$. By the preceding arguments, $f$ is holomorphic on $B^n(z;\delta)$. So we have $s_0>\textup{inf}\,A$ which is a contradiction. Therefore, $\textup{inf}\,A= 0$ and this completes the proof.
\end{proof}
\textit{Proof of Theorem \ref{main theorem}}. Let 
$f\colon B^n\to \mathbb{C} $ be a function that is smooth at the 
origin and holomorphic along a Forelli suspension $S^X_0(F)$. Then the 
formal Taylor series $S_f$ is of holomorphic type by Theorem \ref{formal Forelli suspension theorem}. Note also that $S_f$ converges uniformly on $\Omega$ by Theorem \ref{normalsuspension}. Now $f\equiv S_f$ is holomorphic on $\Omega$ and moreover, Proposition \ref{extension} implies that there exists an open neighborhood $U\subset S^{2n-1}$ of the regular leaf of $S^X_0(F)$ such that $f|_{\Omega}$ extends to a holomorphic function defined on $\Omega\cup S^X_0(U)$. Then the conclusion follows from (\ref{domofholoext}) and Theorem \ref{smallestdomainofholo}. \hfill $\Box$

\section{Examples of Suspensions}\label{Sect example}

In this section, we follow the ideas in \cite{Cho22} to construct several examples of suspensions.
\begin{example}\label{rational suspension}
	\normalfont 
	Fix a vector field $(X,\lambda)$ on $\mathbb{C}^n$. As every point of a nonempty open subset $U \subset S^{2n-1}$ generates a regular leaf, $S^X_0(U)$ is a Forelli suspension by Corollary $\ref{opensuspension}$. Define 
	\[
	F:=\{(z_1,\dots,z_n)\in U: z_i\in \mathbb{Q}~\forall i\in \{1,...,n\}\}.
	\]
	Then by Corollary \ref{countablenonnormal}, $S^X_0(F)$ is not normal. We also conclude from Corollary \ref{opensuspension} that $S^X_0(F)$ is a dense formal Forelli suspension as $S^X_0(\bar{F})=S^X_0(U)$.
\end{example}

In the following, we identify $\mathbb{R}^{2n-1}$ with the set $\{(z_1,\dots,z_n)\in \mathbb{C}^n:  \textup{Im}\,z_1=0\}.$ 
\begin{example}\label{nowhere dense formal Forelli suspension}
	\normalfont 
	Fix a vector field $(X,\lambda)$ on $\mathbb{C}^2$ with eigenvalues $\lambda=(1,\lambda_2),\,\lambda_2>0$.	Let $\{r_{k}\}$, $\{s_\ell\}\subset \mathbb{R}$ be two sequences that decreases from $\frac{\pi}{4}$ to $0$, increases from $\frac{\pi}{4}$ to $\frac{\pi}{2}$, respectively. For each positive integer $\ell$, define
	\begin{align*}
			F_{\ell}:=\{(\textup{cos}\,r_{k},\,e^{is_\ell}\textup{sin}\,r_{k}) \in   \mathbb{R}^3\cap S^3:k~ \text{is a positive integer}\}
	\end{align*}
and let $F:=\bigcup_{\ell=1}^{\infty}{F}_{\ell}.$ Then by Corollary \ref{countablenonnormal}, $S^X_0(F)$ is not normal. We prove that $S^X_0(F)$ is a nowhere dense formal Forelli suspension by showing that $v:=(1,0)\in \bar{F}$ generates a nonsparse leaf. 
	
	Choose an open neighborhood $U$ of $v$ in $S^3$ and suppose that $S^X_0(\bar{F}\cap U)\subset Z(q)$ for some $q\in \mathcal{H}_{\lambda}$ with $\textup{bideg}\,q=(d_1,d_2)$, $d_2\neq 0$. Then $q$ can be written as
	
	\begin{equation}\label{sum}
		q(z_1,z_2,\bar{z}_1,\bar{z}_2)=\sum_{\substack{\alpha+\lambda_2 \beta=d_1 \\ \gamma+\lambda_2 \delta=d_2\\ }}C_{\alpha\beta}^{\gamma\delta}\cdot z_1^{\alpha}z_2^{\beta}\bar{z}^{\gamma}_1\bar{z}^{\delta}_2,
	\end{equation}
	where $\{C_{\alpha\beta}^{\gamma\delta}\}$ is a finite set of complex numbers with $0\leq \alpha,\beta,\gamma,\delta\leq N$. Now we are to show that $q\equiv 0$ on $\mathbb{C}^2$. As $v$ is a limit point of each $F_k$, there exists a positive integer $M$ such that
	\[
	(\textup{cos}\,r_{k},e^{is_\ell}\textup{sin}\,r_{k})\in S^{X}_0(\bar{F}\cap U)\subset Z(q)
	\] 
	if $k,\ell\geq M.$ Then
	\begin{align}	\label{Equation}
		0&=q(\textup{cos}\,r_{k},e^{is_\ell}\textup{sin}\,r_{k}) \nonumber \\
		&=\sum_{\substack{\alpha+\lambda_2 \beta=d_1 \\ \gamma+\lambda_2 \delta=d_2\\ }} \big\{C_{\alpha\beta}^{\gamma\delta}\cdot (\textup{cos}\,r_{k})^{\alpha+\gamma}(\textup{sin}\,r_{k})^{\beta+\delta}e^{is_\ell(\beta-\delta)}\big\}.
	\end{align}
	Fix nonnegative integers $m,r$. Note that $(\ref{Equation})$ is equivalent to the following equation
	\begin{equation*}
		g(z)=\sum_{n=-N}^{N}P_n(\textup{cos}\,r_{k},\, \textup{sin}\,r_{k})\,z^n=0~\forall z\in \{e^{is_\ell}\},
	\end{equation*}
	where 
	\[
	P_n(x,y)=\sum_{\substack{\alpha+\lambda_2 \beta=d_1 \\ \gamma+\lambda_2 \delta=d_2\\ \beta-\delta =n }}  C_{\alpha\beta}^{\gamma\delta}\cdot x^{\alpha+\gamma}y^{\beta+\delta}
	\]
	is a polynomial in real variables $x,y$. As $g$ is holomorphic on $\mathbb{C}-\{0\}$, it follows from the identity theorem that $P_n(1,t_k)= 0$ for each integer $k$ and $n$, where
	\[
	t_k:=\frac{\textup{sin}\,r_{k}}{\textup{cos}^{\lambda_2}\,r_{k}}.
	\] 
	Then finally, the coefficient of each monomial in $P_n(1,t)$ is zero since $P_n(1,t)\in \mathbb{C}[t]$ is a finite polynomial. Therefore $\sum C_{\alpha\beta}^{\gamma\delta} = 0,$ where the sum is taken over all quadruple $(\alpha,\beta,\gamma,\delta)$ satisfying
	\begin{equation}\label{linear equation}
		\begin{cases}
			\alpha+\lambda_2\beta=d_1 \\
			\gamma+\lambda_2\delta =d_2 \\
			\beta+\delta = r\\	
			\beta-\delta = m.	
		\end{cases}
	\end{equation}
	Note that (\ref{linear equation}) always has a unique solution. So we have $C_{\alpha\beta}^{\gamma\delta}=0$ for any quadruple $(\alpha,\beta,\gamma,\delta)$ appearing in (\ref{sum}). Then $q\equiv 0$ on $\mathbb{C}^2$ as desired.

	This construction can be generalized to higher dimensions. Fix a vector field $(X,\lambda)$ on $\mathbb{C}^n$ with $\lambda=(1,\lambda_2,\dots,\lambda_n)$. Let $x_1(\theta)=\textup{cos}\,\theta,$ $x_2(\theta)=\textup{sin}\,\theta$ be the parametrization of $S^1$ and define a parametrization of $S^{n+1}$ inductively as
	\begin{equation*}
		\begin{cases}
			x_i(\theta_1,\dots,\theta_{n},\theta_{n+1}) =x_i(\theta_1,\dots,\theta_{n})\cdot \textup{cos}\,\theta_{n+1} ~\text{for}~1\leq i \leq n+1,\\
			x_{n+2}(\theta_1,\dots,\theta_{n},\theta_{n+1}) = \textup{sin}\,\theta_{n+1},
		\end{cases}
	\end{equation*}
	where $\{x_i(\theta_1,\dots,\theta_n):1\leq i\leq n+1\}$ is the parametrization of $S^{n}$ chosen in the previous induction step. Fix two $(n-1)$-tuples $k:=(k_1,\dots,k_{n-1}),\,\ell:=(\ell_1,\dots,\ell_{n-1})$ of positive integers and define
	\begin{gather*}
		x_i(k):=x_i(r_{k_1},\dots,r_{k_{n-1}})~\forall i\in \{1,\dots,n\},\\
		F^n_{k\ell}:=	\{(x_1(k),e^{is_{\ell_1}}x_2(k),\dots,e^{is_{\ell_{n-1}}}x_{n}(k)) \in   \mathbb{R}^{2n-1}\cap S^{2n-1}\},
	\end{gather*}
	and $F^n:=\bigcup_{k,\ell}{F}^n_{k\ell}.$ Then one can proceed as before to show that $S^{X}_0(F^n)$ is not normal and $v_n=(1,0,\dots,0)\in S^{2n-1}$ generates a nonsparse leaf of $S^{X}_0(F^n)$ for each positive integer $n$. Therefore, $S^{X}_0(F^n)$ is a nowhere dense formal Forelli suspension for any vector field $X$ on $\mathbb{C}^n$.
\end{example}

\begin{example}\label{nowhere dense Forelli suspension}
	\normalfont
	Let $\{s_{\ell}\}$ and $(X,\lambda),\,\lambda=(1,\lambda_2)$ be the same as in Example \ref{nowhere dense formal Forelli suspension}. For each positive integer ${\ell}$, define a copy of $S^1$ in $S^3$ as
	\begin{align*}
		G_{\ell}:=\{(x,e^{is_{\ell}}y)\in \mathbb{R}^3&\cap S^{3}:\,x,y\in \mathbb{R}\}.
	\end{align*}
	Note that $(G_{\ell})'_{\lambda,1}\subset \mathbb{C}$ is biholomorphic to the real line $\mathbb{R}=\{z_1\in \mathbb{C}:\,\textup{Im}\,z_1=0\}$. By applying the Phragm$\acute{\textup{e}}$n-Lindel$\ddot{\textup{o}}$f principle for subharmonic functions (see p.33 of \cite{Ransford95}), one can check that $V^{\ast}_{\mathbb{R}}(z)=0$ for any $z\in \mathbb{C}$. Therefore, every point of $G_{\ell}$ generates a regular leaf and in particular, each $S^X_0(G_{\ell})$ is normal. Note that the suspension $S^X_0(G)$ generated by $G:=\bigcup_{\ell=1}^{\infty}G_\ell$ is a nowhere dense Forelli suspension as it contains the normal suspension $S^{X}_0(G_1)$ and the formal Forelli suspension $S^{X}_0(F)$ constructed in Example \ref{nowhere dense formal Forelli suspension}. Note also that $v=(1,0)\in G$ generates a regular leaf and a nonsparse leaf of $S^{X}_0(G)$.
	
	This construction can also be generalized to higher dimensions. Fix a vector field $(X,\lambda)$ on $\mathbb{C}^n$ and for each positive integer $\ell$, define
	\[
	G^n_{\ell}:=\{(x,z_2,\dots,z_{n-1},e^{is_{\ell}}y)\in \mathbb{R}^{2n-1}\cap S^{2n-1}:\,x,y\in \mathbb{R},\,z_i\in \mathbb{C}~\forall i\}.
	\]
	Then each $(G^n_{\ell})'_{\lambda,1}=\mathbb{C}^{n-2}\times \mathbb{R}\subset \mathbb{C}^{n-1}$ is $L$-regular at every point of itself. So $S^{X}_0(G^n_{\ell})$ is normal. Note that the suspension generated by $G^n:=\bigcup_{\ell=1}^{\infty}G^n_\ell$ contains the formal Forelli suspension $S^{X}_0(F^n)$ constructed in Example $\ref{nowhere dense formal Forelli suspension}$. Therefore, $S^{X}_0(G^n)$ is a nowhere dense Forelli suspension for any vector field $X$ on $\mathbb{C}^n$.
\end{example}

\vspace{50pt}

Ye-Won Luke Cho (\texttt{ww123hh@pusan.ac.kr}) 

\medskip

Department of Mathematics,

Pusan National University, 

Busan 46241, The Republic of Korea.

\end{document}